\documentclass[11pt,reqno]{amsart}
\usepackage[english]{babel}
\usepackage[utf8]{inputenc}
\usepackage[T1]{fontenc}
\usepackage{lmodern} \normalfont 
\DeclareFontShape{T1}{lmr}{bx}{sc} { <-> ssub * cmr/bx/sc }{}
\usepackage{microtype}	

\usepackage{comment}

\usepackage{amssymb}
\usepackage{amsmath}
\usepackage{amsthm}
\usepackage{mathtools}
\usepackage{mathrsfs}
\usepackage{accents} 
\usepackage{etoolbox}
\usepackage{siunitx}
\usepackage{dsfont} 
\sisetup{%
	list-units = brackets,
	range-units = brackets,
	range-phrase = {-},
	per-mode=symbol,
	retain-zero-exponent=true
}

\usepackage{graphicx}
\usepackage{color}
\usepackage[dvipsnames]{xcolor}
\usepackage{tikz}
\usetikzlibrary{plotmarks}
\usetikzlibrary{calc,positioning,shapes}
\usetikzlibrary{patterns,decorations.pathmorphing,decorations.markings}
\usepackage{pgfplots}
\pgfplotsset{compat=newest}
\usepackage[margin=10pt,font=small,labelfont=bf,labelsep=endash]{caption}
\usepackage{subcaption}
\usepackage{nicematrix} 

\textheight=215mm
\textwidth=150mm
\evensidemargin=30.0mm
\oddsidemargin=30.0mm
\topmargin=-1mm
\hoffset=-25.4mm
\usepackage{booktabs}
\usepackage{paralist}
\usepackage{soul}

\numberwithin{equation}{section}

\usepackage{algorithm}
\usepackage{algorithmicx}
\usepackage{algpseudocode}

\usepackage{cite}
\usepackage{multirow} 
\usepackage{enumitem} 
\setlist[enumerate]{label=(\roman*)}

\theoremstyle{plain}
\newtheorem{theorem}{Theorem}[section]
\newtheorem{proposition}[theorem]{Proposition}
\newtheorem{lemma}[theorem]{Lemma}
\newtheorem{corollary}[theorem]{Corollary}
\newtheorem{remark}[theorem]{Remark}
\newtheorem{definition}[theorem]{Definition}

\usepackage[colorlinks,citecolor=black,urlcolor=black]{hyperref}
\hypersetup{allcolors=black}

\usepackage[nameinlink,noabbrev]{cleveref}

\crefname{subsection}{subsection}{subsections}
\Crefname{subsection}{subsection}{subsections}

\crefname{theorem}{theorem}{theorems}
\Crefname{theorem}{Theorem}{Theorems}

\crefname{proposition}{proposition}{propositions}
\Crefname{proposition}{Proposition}{Propositions}

\crefname{lemma}{lemma}{lemmas}
\Crefname{lemma}{Lemma}{Lemmas}

\crefname{corollary}{corollary}{corollaries}
\Crefname{corollary}{Corollary}{Corollaries}

\crefname{remark}{remark}{remarks}
\Crefname{remark}{Remark}{Remarks}

\crefname{definition}{definition}{definitions}
\Crefname{definition}{Definition}{Definitions}

\crefname{assumption}{assumption}{assumptions}
\Crefname{assumption}{Assumption}{Assumptions}

\crefname{example}{example}{examples}
\Crefname{example}{Example}{Examples}

\newcommand{\R}{\mathbb{R}}

\DeclareMathOperator*{\argmin}{arg\,min} 
\DeclareMathOperator*{\esssup}{ess\,sup} 

%



\definecolor{mycolor1}{rgb}{0.00000,0.44700,0.74100}
\definecolor{mycolor2}{rgb}{0.85000,0.32500,0.09800}
\definecolor{mycolor3}{rgb}{0.92900,0.69400,0.12500}
\definecolor{mycolor4}{rgb}{0.46600,0.67400,0.18800}
\definecolor{mycolor5}{rgb}{0.49400,0.18400,0.55600}

\newcommand{\matlab}{MATLAB\textsuperscript{\textregistered}}

\title{Risk averse deterministic Kalman filters for uncertain dynamical systems}

\author{Karl Kunisch${}^{\star,\dagger}$ \and Jesper Schr\"oder${}^\dagger$}
\address{${}^{\star}$ Institute of Mathematics and Scientific Computing, University of Graz, A-8010 Graz, Austria}
\address{${}^{\dagger}$ Johann Radon Institute, Austrian Academy of Sciences, A-4040 Linz, Austria}
\email{karl.kunisch@uni-graz.at}
\email{jesper.schroeder@ricam.oeaw.ac.at}
\date{\today}
\keywords{}

\begin{document}
%
\begin{abstract}
    Taking a deterministic viewpoint this work investigates  extensions of the Kalman-Bucy filter for state reconstruction  to systems containing
    parametric uncertainty in the state operator.
    The emphasis lies on risk averse designs reducing the probability of large reconstruction errors. In a theoretical analysis error bounds in terms of the variance of the uncertainties are derived. The article concludes with a numerical implementation of two examples allowing for a comparison of risk neutral and risk averse estimators. 
\end{abstract}
%

\maketitle
{\footnotesize \textsc{Keywords: state estimation, filtering, parameter dependent systems, Kalman filter} }

{\footnotesize \textsc{AMS subject classification: 34F05, 49N10, 91G70, 93B53, 93C15}}  

%
\section{Introduction}
%
This work proposes estimators for state reconstruction of linear perturbed dynamical systems of the form 
\begin{equation*}
	\begin{aligned}
		\dot{x}(t) &= A_\sigma x(t) + B v(t)
		~~~~~
		t \in (0,T),\\
		x(0) &= x_0 + \eta,
	\end{aligned}
\end{equation*}
with unknown, deterministic disturbances $v \in L^2(0,T;\mathbb{R}^m)$ and $\eta \in \mathbb{R}^n$. The system matrix $A_\sigma$ depends on a parametric uncertainties indicated by $\sigma \in \Sigma$. The objective is an online approximation of the system's state $x$ based on incomplete measurements given as 
\begin{equation*}
	y(t) = C x(t) + \mu(t)
	~~~~~
	t \in (0,T),
\end{equation*}
that are affected by the output disturbance $\mu \in L^2(0,T;\mathbb{R}^r)$.

Utilizing flawed measurements to estimate the state of systems suffering from perturbances is of high interest in the applied sciences at least since Wiener's seminal work \cite{Wie49}.
Kalman's and Bucy's work in \cite{KalB61} is  tailored to the linear setting, for known parameter $\sigma$,  and modeling the disturbances as Gaussian white noise, see also \cite{Kal60} for the time-discrete analogue. The so-called Kalman-Bucy filter is easy to implement, numerically efficient, and widely applied.

Since the parametric uncertainty introduces nonlinearities and multiplicative noise, it is not directly applicable to the class of systems considered here. In this work a risk averse extension is proposed based on Mortensen's strategy of recasting the Kalman filter as the energy minimal state \cite{Mor68}. Specifically, we study probability measures with respect to the parameter of the energy. The measures of interest are the entropic risk measure and the essential supremum for which the minimizers are characterized based on first order optimality conditions. The proposed measures utilize the energies for each system corresponding to a parameter $\sigma \in \Sigma$ separately, followed by a variational criterion which determines a universal state-estimator for the ensemble.
An important part of our work consists in establishing estimates on the difference between the proposed state estimator and that estimator which would be optimal if we knew the exact parameter $\sigma \in \Sigma$.
For both measures we analyze their temporal regularity. The entropic risk measure depends on a risk aversion parameter $\theta > 0 $. We verify that for $\theta \to 0$ the resulting estimator converges to the risk neutral estimator and for $\theta \to \infty$ we obtain convergence to the minimizer of the essential supremum.
In the numerical part of our work the proposed state estimators are compared to  results emerging from the risk neutral approach associated with the expectation which we investigated in earlier work \cite{KunSc25}. 

Let us further comment on the two measures which we investigate.
The entropic risk measure was introduced in the context of financial mathematics to optimize portfolios with a flexible level of risk aversion \cite{FoeSchi16}. More recently it was adapted for optimal control under uncertainty \cite{GutEtAl24,GutKu25} leading to risk averse control strategies reducing the likelihood of large costs. This functional also arises in many other applied sciences including machine learning and neural networks \cite{CalGaPo20}, or statistical mechanics, \cite{PatBe11}, for example. 
It is well known that for infinitely large risk aversion the entropic risk measure corresponds to the essential supremum which in the framework of this work turns into a maximum. The associated estimator is hence given by the solution of a min-max problem which on a technical level is closely related 
to the minimum ball problem arising in operations research.  In the 2-dimensional case this problem  has a long history, see for instance \cite{HV82} and the references provided there. The $\mathbb{R}^n$ case was investigated for instance in \cite{CaDea24,DeaCa23}. These papers also contain many references on the algorithmic developments for this class of problems.  In this paper, to analyze the min-max formulation as well as the entropic risk formulation we follow a different analytical route  by treating these problems by convex analysis techniques. In this way we can use results from  \cite{MorNaVi13} to obtain  existence and uniqueness of solutions and from \cite{NamNgSa12} to derive optimality conditions, in an elegant manner.
The latter are of vital importance for the error analysis which we provide.
%
\section{The Kalman filter}
%
We begin with a brief presentation of two different derivations of the Kalman-Bucy filter for time-invariant systems. While our proposed estimators rely only on the deterministic formulation a recap of the stochastic formulation enriches the interpretation of our approach. 
We include these well known concepts for the purpose of a self-contained work.
%
\subsection{The stochastic formulation}\label[subsection]{subsec: stochForm}
%
We illustrate the stochastic filter based on the original work \cite{KalB61} and refer to \cite{Oks98,Xio08} for a more rigorous treatment in terms of SDEs.  
Consider the disturbed system with system state $X_t$ and measured output $Y_t$ modeled by
\begin{equation}\label{eq: stochSys}
\begin{aligned}
    \tfrac{\mathrm{d}}{\mathrm{d}t} X_t &= A X_t + B V_t
    ~~~~~ t \in (0,T),\\
    Y_t &= C X_t + W_t,
    ~~~~~ t \in (0,T),
\end{aligned}
\end{equation}
with system matrix $A \in \mathbb{R}^{n,n}$, noise input matrix $B \in \mathbb{R}^{n,m}$, and output matrix $C \in \mathbb{R}^{r,n}$.
The dynamics and the measurement are disturbed by the multivariate Gaussian random variables $V_t \in \mathbb{R}^m$ and $W_t \in \mathbb{R}^r$, respectively. The so-called white noise terms have zero mean and covariances given by 
\begin{equation*}
\begin{aligned}
    \text{cov}[V_t,V_s] &= R \, \delta(t-s),\\
    \text{cov}[W_t,W_s] & = Q \, \delta(t-s),
\end{aligned}
\end{equation*}
where $R \in \mathbb{R}^{m,m}$ and $Q\in \mathbb{R}^{r,r}$ are given  symmetric, positive definite matrices. 
The distribution of the state at time $t$ is Gaussian and denoted by $X_t$, where the initial distribution $X_0$ is characterized via its mean $x_0 \in \mathbb{R}^n$ and covariance $\Gamma \in \mathbb{R}^{n,n}$, symmetric, positive definite. The two noise terms and the initial distribution are assumed to be mutually independent and finally $Y_t$ denotes the distribution of the output at time $t$.

According to \cite{KalB61} the goal is to optimally estimate the realized state $x(t)$ of the system given realized, measured outputs $y(s)$, $0 \leq s \leq t$.
In their work Kalman and Bucy show that the optimal estimate $\widehat{x}(t)$ is given via
\begin{equation}\label{eq: stochFil}
\begin{aligned}
    \dot{\widehat{x}}(t) &= A \widehat{x}(t) + \Pi(t) C^\top Q^{-1} (y(t) - C\widehat{x}(t))
    ~~~~~t \in (0,T),\\
    \widehat{x}(0) &= x_0,
\end{aligned}
\end{equation}
where $\Pi$ is the unique solution to the differential Riccati equation 
\begin{equation}\label{eq: stochRicc}
\begin{aligned}
    \dot{\Pi}(t) &= A \Pi(t) + \Pi(t) A^\top - \Pi(t) C^\top Q^{-1} C \Pi(t) + B R B^\top
    ~~~~~ t \in (0,T),\\
    \Pi(0) &= \Gamma.
\end{aligned}
\end{equation}
In the stochastic interpretation, given a measured output the state of the system at time $t$ is estimated via the Gaussian distribution $\mathcal{N}(\widehat{x}(t),\Pi(t))$ where the mean and covariance are characterized via \eqref{eq: stochFil} and \eqref{eq: stochRicc}, respectively.
%
\subsection{The deterministic formulation}
%
In this subsection we consider a deterministic model of a disturbed system brought forward by Mortensen in \cite{Mor68}, see also \cite{Wil04}. It reads
\begin{equation}\label{eq: detModel}
\begin{alignedat}{2}
    x(t) &= A x(t) + B v(t)
    ~~~~~ &&t \in (0,T),\\
    x(0) &= x_0 + \eta, &&\\
    y(t) &= C x(t) + \mu(t)
    ~~~~~ &&t \in (0,T),
\end{alignedat}
\end{equation}
where the system matrices $A$, $B$, and $C$ are as in \eqref{eq: stochSys}. The disturbances in dynamics, initial value, and output are modeled by $v \in L^2(0,T;\mathbb{R}^m)$, $\eta \in \mathbb{R}^n$, and $\mu \in L^2(0,T;\mathbb{R}^r)$ and are assumed to be deterministic and unknown.
To increase readability for the remainder of this work we denote $\mathcal{L}_{t_1,t_2}^p = L^p(t_1,t_2;\mathbb{R}^d)$ and $\mathcal{H}_{t_1}^{t_2} = H^1(t_1,t_2;\mathbb{R}^n)$ for $0 \leq t_1 < t_2 \leq T $, $d \in \mathbb{N}$, and $0<p\leq\infty$ and 

The strategy for estimating $x(t)$, at $t \in (0,T]$ consists in identifying the energy minimal disturbances $\eta^*[t]$, $v^*[t]$, and $\mu^*[t]$ that fit the measured output $y(s)$, $0\leq s \leq t$, i.e., in solving
\begin{equation}\label{eq: energDist}
\begin{aligned}
    \min_{\eta \in \mathbb{R}^n, v \in \mathcal{L}_{0,t}^2, \mu \in \mathcal{L}_{0,t}^2}~ 
    &\Vert \eta \Vert_{\Gamma^{-1}}^2 
    + \int_0^t \Vert v(s) \Vert_{R^{-1}}^2 
    + \Vert \mu(s) \Vert_{Q^{-1}}^2 \, \mathrm{d}s,\\
    &\text{subject to}~
    \eqref{eq: detModel}~\text{on}~(0,t).
\end{aligned}
\end{equation}
This minimizer is used to solve \eqref{eq: detModel} for the associated trajectory $x^*[t]$ and the estimator is defined as $\widehat{x}(t) = x^*[t](t)$. Here and in the following for a positive definite matrix $M$ and vector $x$ of fitting size we denote $\Vert x \Vert_M^2 = x^\top M x$. 
The matrices $\Gamma$, $R$, and $Q$ correspond to the noise covariances from the stochastic formulation. Using their inverses as weighting matrices is a strategy also used in generalized least squares \cite{Ait36}.
In \cite{Wil04} Willems demonstrates that this estimator is given precisely by the Kalman filter equations known from the stochastic formulation. Indeed $\widehat{x}$ solves
\begin{equation}\label{eq: KF_fixed}
	\begin{aligned}
		\dot{\widehat{x}}(t) 
		&= A \widehat{x}(t)
		+ \Pi(t) C^\top Q^{-1} \left( y(t) - C \widehat{x}(t)\right)
        ~~~~~
		t \in (0,T) ,\\
		\widehat{x}(0) &= x_0,
	\end{aligned}
\end{equation}
where $\Pi $ is given as the unique solution of 
\begin{equation}\label{eq: Ricc_fixed}
	\begin{aligned}
		\dot{\Pi}(t) &= A \Pi(t) + \Pi(t) A^\top
		- \Pi(t) C^\top Q^{-1} C \Pi(t) 
		+ B R B^\top
		~~~
		~~~t \in (0,T),\\
		\Pi(0) &= \Gamma.
	\end{aligned}
\end{equation}

Our analysis follows the slightly different formulation of \cite{Mor68}.
By adding a final state $\xi$ and inserting the identities for $x(0)$ and $y$ Mortensen recasts \eqref{eq: energDist} into a problem of optimal control. It reads
\begin{alignat}{2}
    &\min_{x \in \mathcal{H}_0^t, v \in \mathcal{L}_{0,t}^2}~ J(x,v;t,\xi) &&= 
    \Vert x(0) - x_0 \Vert_{\Gamma^{-1}}^2 
    + \int_0^t \Vert v(s) \Vert_{R^{-1}}^2 
    + \Vert y - C x(s) \Vert_{Q^{-1}}^2 \, \mathrm{d}s,\label{eq: cost}\\
    &\text{subject to}
    ~~~~~~~~~~\dot{x}(s) &&= A x(s) + B v(s)
    ~~~~~ s \in (0,t), \label{eq: state}\\
    &~~~~~~~~~~~~~~~~~~~~~~~x(t) &&= \xi \label{eq: FV}
\end{alignat}
with associated value function
\begin{equation}\label{eq: VF}
\begin{aligned}
    \mathcal{V}(t,\xi) &= \inf_{x \in \mathcal{H}_0^t, v \in \mathcal{L}_{0,t}^2} J(x,v;t,\xi) ~~~\text{subject to}~ \eqref{eq: state}-\eqref{eq: FV} ~~~ t \in (0,T],\\
    \mathcal{V}(0,\xi) &= \Vert \xi - x_0 \Vert_{\Gamma^{-1}}^2.
\end{aligned}
\end{equation}
It represents the minimal amount of energy required to ensure that at time $t$ the system is in the state $\xi$. In \cite[Lem.~2.5]{KunSc25} it is shown that in the present case of linear dynamics the value function $\mathcal{V}$ is quadratic in space and given as
\begin{equation}\label{eq: VF_fixed}
    \mathcal{V}(t,\xi) = 
    (\xi - \widehat{x} (t))^\top \Pi^{-1}(t) (\xi - \widehat{x} (t))
    + \int_0^t \Vert y(s) - C \widehat{x} (s) \Vert_{Q^{-1}}^2 \, \mathrm{d}s,
\end{equation}
where $\widehat{x}$ and $\Pi$ are given via the Kalman filter equations \eqref{eq: KF_fixed} and \eqref{eq: Ricc_fixed}. Hence the estimator $\widehat{x}$ constructed via \eqref{eq: energDist} can equivalently be characterized as
\begin{equation*}
    \widehat{x}(t) \coloneqq \argmin_{\xi \in \mathbb{R}^n} \mathcal{V}(t,\xi).
\end{equation*}
We point out that the control formulation at hand is merely a tool to reconstruct energy minimal disturbances that most likely caused the measured output. 

Even though the following derivations and arguments are based on the deterministic perspective, we still refer to the solution $\Pi$ of \eqref{eq: Ricc_fixed} as the (error) covariance associated with the Kalman filter allowing for an interpretation of our results. We denote the inverse of the covariance as $P \coloneqq \Pi^{-1} $ and following \cite[Sec.~5.4]{DeGr70} we refer to it as the precision matrix. 
By multiplying \eqref{eq: Ricc_fixed} with $P(t)$ from the left and from the right we find that the precision matrix is characterized via the differential Riccati equation
\begin{equation}\label{eq: Prec_fixed}
\begin{aligned}
    \dot{P}(t) &= -A^\top P(t) - P(t) A - P(t) BRB^\top P(t) + C^\top Q^{-1} C,
    ~~~
	~~~t \in (0,T),\\
    P(0) &= \Gamma^{-1}.
\end{aligned}
\end{equation}
%
\section{Kalman filtering for uncertain systems}\label{sec: KF for uncertain systems}
%
The Kalman filter described in the previous section assumes exact knowledge of both the system dynamics represented by  $A$ and the nature of the occurring noise represented by the covariance  matrices $\Gamma$, $R$, and $Q$. In practical applications, however,  this information may not be available in exact form and the matrices may depend on parameters, \cite{Meh70,ShiJohMu07}. 
\subsection{Modeling uncertain parameters}\label[subsection]{subsec: ModUnc}
%
Following the description in \cite{KunSc25} the uncertainties of the system matrix $A$ are modeled as follows. 
Let $s_A \in \mathbb{N}$ and consider a matrix valued function
\begin{equation*}
    a \colon \mathbb{R}^{s_A} \rightarrow \mathbb{R}^{n,n}.
\end{equation*}
Further let 
$N_A \in \mathbb{N}$
and 
$\Sigma_A \subset \mathbb{R}^{s_A}$
be of cardinality 
$\vert \Sigma_A \vert = N_A$. Consider the discrete probability space $(\Sigma_A, \mathcal{P}(\Sigma_A),P_A)$
, where 
$\mathcal{P_A}(\Sigma_A)$
denotes the power set of $\Sigma_A$ and for $S \subset \Sigma_A$ define the probability function 
$P_A(S) = \frac{1}{N_A} \vert S \vert$.
Then the linear dynamics of the system are characterized by the matrix valued random variable 
\begin{equation*}
    \sigma \in \Sigma_A \mapsto a_\sigma = a(\sigma) \in \mathbb{R}^{n,n},
\end{equation*}
and $a_\sigma$ denotes the system matrix that realizes with the parameter $\sigma \in \Sigma_A$.
The uncertain weighting matrices $\gamma_\sigma$, $r_\sigma$, and $q_\sigma$ are defined  as random variables on $\Sigma_\Gamma$, $\Sigma_R$, and $\Sigma_Q$ with respective cardinalities and dimensions $N_\Gamma$, $s_\Gamma$, $N_R$, $s_R$, and $N_Q$, $s_Q$. The associated probabilities are defined as that for $A$. We denote $N = N_A \, N_\Gamma \, N_R \, N_Q$ and $s = s_A \, s_\Gamma \, s_R \, s_Q$. Then
\begin{equation*}
    \Sigma = \Sigma_A \times \Sigma_\Gamma \times \Sigma_R \times \Sigma_Q \subset \mathbb{R}^s
\end{equation*}
has cardinality $N$. Further we define the product space $(\Sigma, \mathcal{F},P)$, via
\begin{equation*}
\begin{aligned}
    \mathcal{F} &= \mathcal{P}(\Sigma_A) \otimes \mathcal{P}(\Sigma_\Gamma) \otimes \mathcal{P}(\Sigma_R) \otimes \mathcal{P}(\Sigma_Q) = \mathcal{P}(\Sigma),\\
    P(S_A \otimes S_\Gamma \otimes S_R \otimes S_Q) &= P_A(S_A) \, P_\Gamma(S_\Gamma) \, P_R(S_R)\, P_Q(S_Q) = \frac{1}{\vert S_A \otimes S_\Gamma \otimes S_R \otimes S_Q \vert}.
\end{aligned}
\end{equation*}
Finally the uncertain system and weighting matrices are defined as random variables on $(\Sigma,\mathcal{P}(\Sigma),P)$, where for $S \subset \Sigma$ we set $P(S) = \tfrac{1}{N} \vert S \vert $. Denote $\sigma = (\sigma_A,\sigma_\Gamma,\sigma_R,\sigma_Q) \in \Sigma$ and define
\begin{equation*}
\begin{alignedat}{2}
    \sigma\in \Sigma  \mapsto A_\sigma &
    =  a_{\sigma_A} \in\mathbb{R}^{n,n},
    \quad \sigma \in \Sigma  \mapsto \Gamma_\sigma 
    &&=  \gamma_{\sigma_\Gamma}\in \mathbb{R}^{n,n},\\
    \sigma\in \Sigma  \mapsto R_\sigma 
    &= r_{\sigma_R}\in \mathbb{R}^{m,m},
    \quad \sigma\in \Sigma  \mapsto Q_\sigma 
    &&= q_{\sigma_Q} \in \mathbb{R}^{r,r}.
\end{alignedat}
\end{equation*}
Note that  by construction they are mutually independent. 
For convenience we denote the $N$ elements of $\Sigma$ by $\sigma_k$, $k = 1,\dots,N$ and dependencies on $\sigma_k$ may be indicated by indexing $k$, e.g., $A_{\sigma_k} = A_k$. Also, for a given parameter $\sigma$ the associated four-tuple consisting of system matrix and covariance matrices is denoted by $\mathcal{S}_\sigma = (A_{{\sigma}},\Gamma_{{\sigma}}, R_{{\sigma}},Q_{{\sigma}}) \in \mathbb{R}^{n,n} \times \mathbb{R}^{n,n} \times \mathbb{R}^{m,m} \times \mathbb{R}^{r,r}$. To quantify the distance between such tuples for $p \in \mathbb{N}$ we introduce the norm
$
\Vert \mathcal{S}_{\sigma} \Vert_p^p
=
\Vert A_\sigma  \Vert_{\mathbb{R}^{n,n}}^p
+ \Vert \Gamma_\sigma \Vert_{\mathbb{R}^{n,n}}^p
+ \Vert R_\sigma \Vert_{\mathbb{R}^{m,m}}^p
+ \Vert Q_\sigma \Vert_{\mathbb{R}^{r,r}}^p
$. 

For any $\sigma_k \in \Sigma$ the corresponding Kalman filter is denoted by $\widehat{x}_k$ and the associated covariance and precision matrices are denoted by $\Pi_k$ and $P_k = \Pi_k^{-1}$, respectively. They are characterized via 
\begin{equation}\label{eq: KF_k}
	\begin{aligned}
		\dot{\widehat{x}}_k(t) 
		&= A_k \widehat{x}_k(t)
		+ \Pi_k(t) C^\top Q_k^{-1} \left( y(t) - C \widehat{x}_k(t)\right)
        ~~~~~
		t \in (0,T) ,\\
		\widehat{x}_k(0) &= x_0,
	\end{aligned}
\end{equation}
and
\begin{equation}\label{eq: Ricc_k}
	\begin{aligned}
		\dot{\Pi}_k(t) &= A_k \Pi_k(t) + \Pi_k(t) A_k^\top
		- \Pi_k(t) C^\top Q_k^{-1} C \Pi_k(t) 
		+ B R_k B^\top
		~~~
		~~~t \in (0,T),\\
		\Pi_k(0) &= \Gamma_k.
	\end{aligned}
\end{equation}
We note that the solution $\Pi_k$ is infinitely often continuously differentiable. Since the same holds for the mapping $M \mapsto M^{-1}$ defined on the set of symmetric positive definite matrices, we obtain that for all $k$ we have 
\begin{equation}\label{eq: P_smooth}
    P_k \in C^\infty([0,T];\mathbb{R}^{n,n}).
\end{equation}
To make our arguments more concise,  we denote for $k \in \{ 1,\dots,N \}$ and $t \in [0,T]$
\begin{equation*}
    r_k(t) = \int_0^t \Vert y(s) - C \widehat{x}_k(s) \Vert_{Q_k^{-1}}^2 \, \mathrm{d}s.
\end{equation*}
Analogous to \eqref{eq: VF_fixed} we obtain
\begin{equation}\label{eq: VF_k}
    \mathcal{V}_k(t,x) = \Vert x - \widehat{x}_k(t) \Vert_{P_k(t)}^2 + r_k(t).
\end{equation}
Next the estimators which will be investigated are described.
%
\subsection{Minimizing the expected energy}\label[subsection]{subs: MinExp}
%
We begin by presenting a design that was proposed and studied  in an earlier work, cf.~\cite{KunSc25}. 
For $t \in [0,T]$ consider the minimization problem
\begin{equation}\label{eq: minE}
    \min_{x \in \mathbb{R}^n}~ \mathbb{E} \left[ \mathcal{V}_\sigma (t,{x}) \right]
    = \min_{x \in \mathbb{R}^n}~\frac{1}{N} \sum_{k=1}^N \mathcal{V}_k (t,x).
\end{equation}
This minimization problem defines the first estimator. In \cite[Prop.~3.4]{KunSc25} we showed existence and uniqueness of the solution of \eqref{eq: minE} and throughout this work we denote the estimator defined as the pointwise solution of \eqref{eq: minE} as $\widehat{x}_0(t)$.
We further cite the following characterization.
\begin{proposition}\label[proposition]{prop: charEstMin}
    The state estimator $\widehat{x}_0$ is well-defined and given in terms of the Kalman filter trajectories and precision matrices associated with the individual Kalman filters. For $t \in [0,T]$ it holds
	\begin{equation}\label{eq: en_min}
		\widehat{x}_0(t) = \mathcal{P}^{-1}(t)
							\sum_{k=1}^N P_k(t) \, \widehat{x}_k(t), 
	\end{equation}
    where $\mathcal{P}(t) = \sum_{k=1}^N P_k(t) $.
\end{proposition}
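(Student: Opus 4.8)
The plan is to exploit that, for each fixed $t \in [0,T]$, the objective in \eqref{eq: minE} is a strictly convex, coercive quadratic in $x$, so that a unique minimizer exists and is pinned down by a single linear first-order condition; the claimed formula then follows by solving that linear system.

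First I would record that, by the explicit form \eqref{eq: VF_k} of $\mathcal{V}_k$, the remainder terms $r_k(t)$ do not depend on $x$, hence
\[
    \argmin_{x \in \mathbb{R}^n}\ \frac{1}{N}\sum_{k=1}^N \mathcal{V}_k(t,x)
    \ =\ \argmin_{x \in \mathbb{R}^n}\ \sum_{k=1}^N \Vert x - \widehat{x}_k(t)\Vert_{P_k(t)}^2 .
\]
Each summand is a convex quadratic with Hessian $2P_k(t)$, and $P_k(t) = \Pi_k(t)^{-1}$ is symmetric positive definite; therefore the full sum has Hessian $2\sum_{k=1}^N P_k(t) = 2\mathcal{P}(t)$, a sum of symmetric positive definite matrices, hence itself symmetric positive definite and in particular invertible. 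Strict convexity together with coercivity gives existence and uniqueness of the minimizer --- this is precisely \cite[Prop.~3.4]{KunSc25}, which we may invoke.

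Next I would compute the gradient of $x \mapsto \sum_{k=1}^N \Vert x - \widehat{x}_k(t)\Vert_{P_k(t)}^2$ and set it to zero, obtaining
\[
    2\sum_{k=1}^N P_k(t)\bigl(x - \widehat{x}_k(t)\bigr) = 0,
    \qquad\text{equivalently}\qquad
    \mathcal{P}(t)\, x = \sum_{k=1}^N P_k(t)\,\widehat{x}_k(t).
\]
Since $\mathcal{P}(t)$ is invertible, the unique solution is $x = \mathcal{P}(t)^{-1}\sum_{k=1}^N P_k(t)\widehat{x}_k(t)$, which is exactly \eqref{eq: en_min}. Finally, to see that $\widehat{x}_0$ is well-defined on all of $[0,T]$: the trajectories $\widehat{x}_k \in \mathcal{H}_0^T$ and the covariances $\Pi_k$ solve \eqref{eq: KF_k}--\eqref{eq: Ricc_k}, and by \eqref{eq: P_smooth} each $P_k$ --- hence $\mathcal{P} = \sum_{k=1}^N P_k$ --- is of class $C^\infty([0,T];\mathbb{R}^{n,n})$ with $\mathcal{P}(t)$ positive definite for every $t$; smoothness of $M \mapsto M^{-1}$ on the positive definite cone then shows $t \mapsto \mathcal{P}(t)^{-1}$ is $C^\infty$, so the right-hand side of \eqref{eq: en_min} is a well-defined element of $\mathcal{H}_0^T$.

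There is essentially no obstacle in this argument; the only points deserving a moment's care are noting that the constants $r_k(t)$ are irrelevant for the $\argmin$, and checking that a sum of positive definite precision matrices remains positive definite, so that the first-order system admits a unique solution for every $t$.
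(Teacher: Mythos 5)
Your argument is correct: the paper itself does not prove this proposition but cites it from \cite[Prop.~3.4]{KunSc25}, and your derivation --- dropping the $x$-independent terms $r_k(t)$, noting that $\mathcal{P}(t)$ is symmetric positive definite as a sum of the positive definite precision matrices, and solving the linear first-order condition $\sum_k P_k(t)(x-\widehat{x}_k(t))=0$ --- is exactly the standard route that characterization rests on. No gaps.
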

This representation immediately implies a result on the regularity of the estimator.
\begin{corollary}\label[corollary]{cor: reg_Min_expEn}
    The estimator $\widehat{x}_0$ is square integrable and admits a square integrable weak derivative, i.e., $\widehat{x}_0 \in \mathcal{H}_0^T$. Its weak derivative is given by
    \begin{equation*}
        \dot{\widehat{x}}_0(t)
        =
        - \mathcal{P}^{-1}(t) \, \dot{\mathcal{P}}(t) \, \widehat{x}_0(t) 
        +
        \mathcal{P}^{-1}(t) \sum_{k=1}^N
            P_k(t) \dot{\widehat{x}}(t)
            + 
            \dot{P}_k(t) \widehat{x}_k(t).
    \end{equation*}
\end{corollary}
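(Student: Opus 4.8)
The plan is to read off the regularity of $\widehat{x}_0$ from the closed form provided in \Cref{prop: charEstMin}. First I collect the necessary smoothness of the ingredients. By \eqref{eq: P_smooth} every precision matrix $P_k$ lies in $C^\infty([0,T];\mathbb{R}^{n,n})$, hence so does $\mathcal{P} = \sum_{k=1}^N P_k$; being a finite sum of symmetric positive definite matrices, $\mathcal{P}(t)$ is symmetric positive definite, in particular invertible, for every $t \in [0,T]$. Since matrix inversion is a $C^\infty$ map on the open set of invertible matrices, it follows that $\mathcal{P}^{-1} \in C^\infty([0,T];\mathbb{R}^{n,n})$, with
\begin{equation*}
    \tfrac{\mathrm{d}}{\mathrm{d}t}\,\mathcal{P}^{-1}(t) = -\,\mathcal{P}^{-1}(t)\,\dot{\mathcal{P}}(t)\,\mathcal{P}^{-1}(t).
\end{equation*}
Next I verify that each individual Kalman filter trajectory satisfies $\widehat{x}_k \in \mathcal{H}_0^T$: rewriting \eqref{eq: KF_k} as $\dot{\widehat{x}}_k = (A_k - \Pi_k C^\top Q_k^{-1} C)\widehat{x}_k + \Pi_k C^\top Q_k^{-1} y$, the homogeneous coefficient is continuous in $t$ and the inhomogeneity belongs to $\mathcal{L}_{0,T}^2$ since $\Pi_k$ is continuous and $y \in \mathcal{L}_{0,T}^2$; the solution $\widehat{x}_k$ is therefore absolutely continuous, so the right-hand side is square integrable, whence $\dot{\widehat{x}}_k \in \mathcal{L}_{0,T}^2$ and $\widehat{x}_k \in \mathcal{H}_0^T$.

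It remains to differentiate the product. Using \eqref{eq: en_min} write $\widehat{x}_0 = \mathcal{P}^{-1}\sum_{k=1}^N P_k \widehat{x}_k$. A product of a $C^\infty$ (hence Lipschitz on $[0,T]$) matrix-valued function with an $\mathcal{H}_0^T$ vector field again belongs to $\mathcal{H}_0^T$, and the Leibniz rule holds for the weak derivative; applying this termwise both shows $\widehat{x}_0 \in \mathcal{H}_0^T$ and gives
\begin{equation*}
    \dot{\widehat{x}}_0
    = \Bigl(\tfrac{\mathrm{d}}{\mathrm{d}t}\mathcal{P}^{-1}\Bigr)\sum_{k=1}^N P_k \widehat{x}_k
    + \mathcal{P}^{-1}\sum_{k=1}^N \bigl(\dot{P}_k \widehat{x}_k + P_k \dot{\widehat{x}}_k\bigr).
\end{equation*}
Substituting the formula for $\tfrac{\mathrm{d}}{\mathrm{d}t}\mathcal{P}^{-1}$ above and recognizing $\mathcal{P}^{-1}\sum_{k=1}^N P_k \widehat{x}_k = \widehat{x}_0$ in the first term yields the asserted identity.

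The argument is essentially bookkeeping; the only points needing a word of care are that $\mathcal{P}^{-1}$ is genuinely $C^\infty$ (rather than merely continuous), for which invertibility of $\mathcal{P}(t)$ at every $t$---guaranteed by positive definiteness---together with smoothness of the inversion map is the relevant input, and the validity of the Leibniz rule for the weak derivative of a smooth-times-$H^1$ product, which is standard. I therefore expect no genuine obstacle.
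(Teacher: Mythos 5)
Your proposal is correct and follows the same route as the paper's own (much terser) proof: smoothness of $\mathcal{P}^{-1}$ from \eqref{eq: P_smooth}, $\widehat{x}_k \in \mathcal{H}_0^T$ from \eqref{eq: KF_k}, and differentiation of the representation \eqref{eq: en_min} via the product rule and the identity $\tfrac{\mathrm{d}}{\mathrm{d}t}\mathcal{P}^{-1} = -\mathcal{P}^{-1}\dot{\mathcal{P}}\mathcal{P}^{-1}$. You simply supply the bookkeeping the paper leaves implicit (and your computation confirms that the $\dot{\widehat{x}}(t)$ in the stated formula should read $\dot{\widehat{x}}_k(t)$).
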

\begin{proof}
    The regularity of $P_k$ presented in \eqref{eq: P_smooth} implies $\mathcal{P}^{-1} \in C^\infty([0,T];\mathbb{R}^{n,n})$. For the solutions of \eqref{eq: KF_k} we have $\widehat{x}_k \in \mathcal{H}_0^T$. The claim for $\widehat{x}_0$ follows from the representation given in \eqref{eq: en_min}. The formula for the weak derivative is obtained by an application of the chain rule and \eqref{eq: en_min}.
\end{proof}
We proceed by introducing more risk averse designs.
%
\subsection{Minimizing the worst case energy}
%
For the construction of the second estimator consider the problem of minimizing the worst case energy 
\begin{equation}\label{eq: minMax}
    \min_{x\in \mathbb{R}^n}~
    \esssup_{\sigma \in \Sigma} \mathcal{V}_\sigma (t,x) 
    = 
    \min_{x\in \mathbb{R}^n}~\max_{k=1,\dots,N} \mathcal{V}_k(t,x).
\end{equation}
We begin by ensuring existence of a unique solution at any time. 
\begin{lemma}\label[lemma]{lem: Inft_Existence}
    For every $t \in [0,T]$ the problem \eqref{eq: minMax} admits a unique solution. 
\end{lemma}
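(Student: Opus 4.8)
The plan is to recognize the objective $x \mapsto \max_{k=1,\dots,N} \mathcal{V}_k(t,x)$ as a convex function and derive existence and uniqueness from standard convex-analytic arguments. Fix $t \in [0,T]$. Each $\mathcal{V}_k(t,\cdot)$ is, by \eqref{eq: VF_k}, a quadratic function $\Vert x - \widehat{x}_k(t) \Vert_{P_k(t)}^2 + r_k(t)$ with $P_k(t)$ symmetric positive definite (being the inverse of $\Pi_k(t)$, which is symmetric positive definite as the solution of the Riccati equation \eqref{eq: Ricc_k} with positive definite data). Hence each $\mathcal{V}_k(t,\cdot)$ is strictly convex and coercive. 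First I would establish continuity and convexity of the pointwise maximum $\Phi(x) = \max_k \mathcal{V}_k(t,x)$: the maximum of finitely many continuous convex functions is continuous and convex. Coercivity of $\Phi$ follows since $\Phi(x) \geq \mathcal{V}_1(t,x) \to \infty$ as $\Vert x \Vert \to \infty$. Existence of a minimizer is then immediate from the direct method: $\Phi$ is continuous and coercive on $\R^n$, so it attains its infimum on a sufficiently large closed ball.

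For uniqueness, the subtlety is that the max of strictly convex functions need not be strictly convex on segments where two indices $k$ remain tied and their difference is affine — but here each $\mathcal{V}_k(t,\cdot)$ is strictly convex, so on any nondegenerate segment $[x_0,x_1]$ with $x_0 \neq x_1$ and for any active index $k$ at an interior point, strict convexity of that individual $\mathcal{V}_k$ already forces $\Phi((1-\lambda)x_0 + \lambda x_1) < (1-\lambda)\Phi(x_0) + \lambda \Phi(x_1)$ whenever $\Phi(x_0) = \mathcal{V}_k(t,x_0)$ and $\Phi(x_1) = \mathcal{V}_k(t,x_1)$ both hold — and if no single index achieves the max at both endpoints one argues via the pointwise bound $\Phi \geq \mathcal{V}_k$. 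A cleaner route, which I expect the authors follow given the reference to \cite{MorNaVi13}, is to cast \eqref{eq: minMax} as minimizing $\max_k \mathcal{V}_k$ over $\R^n$ and invoke the cited existence-and-uniqueness result for such generalized minimal-ball / min-max problems directly: the hypotheses there (strict convexity and coercivity of the constituent functions) are met by construction. I would phrase the proof to quote \cite{MorNaVi13} for uniqueness after verifying convexity, continuity, and coercivity by hand.

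The main obstacle — really the only nontrivial point — is the uniqueness argument, since existence is a one-line application of the direct method. Specifically one must rule out a flat piece in $\Phi$ connecting two distinct minimizers $x^{(1)} \neq x^{(2)}$. Suppose both are minimizers with common value $v$. For the midpoint $\bar x = \tfrac12(x^{(1)} + x^{(2)})$, pick any index $k$ active at $\bar x$, i.e. $\mathcal{V}_k(t,\bar x) = \Phi(\bar x) = v$. Then by strict convexity of $\mathcal{V}_k(t,\cdot)$,
\begin{equation*}
    v = \mathcal{V}_k(t,\bar x) < \tfrac12 \mathcal{V}_k(t,x^{(1)}) + \tfrac12 \mathcal{V}_k(t,x^{(2)}) \leq \tfrac12 \Phi(x^{(1)}) + \tfrac12 \Phi(x^{(2)}) = v,
\end{equation*}
a contradiction. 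This closes the argument without needing \cite{MorNaVi13}, though citing it is a legitimate shortcut. I would present the hand-written version since it is short and self-contained, reserving the citations for the optimality-condition analysis that follows.
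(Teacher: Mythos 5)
Your proposal is correct and follows essentially the same route as the paper: the authors likewise observe that $\max_k \mathcal{V}_k(t,\cdot)$ inherits strict convexity and coercivity from the finitely many strictly convex, coercive quadratics $\mathcal{V}_k(t,\cdot)$ and conclude existence and uniqueness directly, without invoking \cite{MorNaVi13}. Your worry that the max of strictly convex functions might fail to be strictly convex is unfounded (your own midpoint argument is precisely the standard proof that it is), so your self-contained version simply spells out what the paper leaves implicit.
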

\begin{proof}
  The value functions $\mathcal{V}_k$ defined in  \eqref{eq: VF_k} are strictly convex and coercive in $x$. Since the maximum  in \eqref{eq: minMax} is taken over finitely many indices $k$, the expression $\max_k \mathcal{V}_k(t,\cdot)$ inherits the strict convexity and coercivity properties. As a consequence, the existence of a unique minimizer then follows.
\end{proof}
This result enables the definition of the second estimator. 
\begin{definition}\label[definition]{def: minWorstCase}
    The state estimator minimizing the worst case energy at time $t \in [0,T]$ is denoted as $\widehat{x}_\infty(t)$ and given by the unique solution of the minimization problem \eqref{eq: minMax}.
\end{definition}
In the following we characterize the minimizer via a representation in terms of the family members $\widehat{x}_k$. This is done for every time point individually. Hence, from here on we fix a time $t \in [0,T]$.
To facilitate the discussion using tools from convex analysis we introduce the following notation. We define
\begin{equation*}
    f[t] \colon \mathbb{R}^n \to \mathbb{R},
    ~~~
    f[t](x) = \max_{k = 1,\dots,N} \mathcal{V}_k(t,x).
\end{equation*}
Now solving \eqref{eq: minMax} for the fixed time $t$ is equivalent to solving
\begin{equation}\label{eq: minmax_Fct}
    \min_{x \in \mathbb{R}^n} f[t](x).
\end{equation}
To formulate the first order optimality condition we introduce the set of active indices.
\begin{definition}
    For $x \in \mathbb{R}$ we define 
    \begin{equation*}
        I[t](x)
        = \{
        k \in \{ 1, \dots,N \}
        \colon
        \mathcal{V}_k(t,x) = f[t](x)
        \},
    \end{equation*}
    i.e., $I[t](x)$ is the set of all indices for which the maximum is attained. 
\end{definition}
With these definitions at hand we are able to characterize the minimizer $\widehat{x}_\infty(t)$.
\begin{theorem}\label[theorem]{thm: char_inft}
    Let $t \in [0,T]$ be arbitrary.
    There exists a subset $S(t) \subset I[t](\widehat{x}_\infty(t))$ with cardinality $\vert S(t) \vert = p \leq n+1$ and coefficients $\alpha(t) \in [0,1]^p$ such that it holds
    \begin{alignat*}{2}
        &(i)~~~~~
        &&\sum_{k\in S(t)} \alpha_k(t) = 1,\\
        &(ii)~~~~~
        && \left\{ P_k(t) ( \widehat{x}_\infty(t) - \widehat{x}_k(t) ) \right\}_{k \in S(t)}~\text{are affine independent}, \\
        &(iii)~~~~~
        &&\widehat{x}_\infty(t) 
        = \left( \sum_{k\in S(t)} \alpha_k(t) P_k(t) \right)^{-1} \sum_{k \in S} \alpha_k(t) P_k(t) \widehat{x}_k(t).
    \end{alignat*}
    In other words the estimator can be characterized in terms of a subset of $n+1$ or fewer family members in which the maximum is attained.
\end{theorem}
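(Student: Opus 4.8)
The plan is to apply convex analysis to the function $f[t]$, which is a pointwise maximum of finitely many strictly convex quadratics. First I would compute the subdifferential $\partial f[t](x)$ at the minimizer $\widehat{x}_\infty(t)$. Since each $\mathcal{V}_k(t,\cdot)$ is smooth with gradient $\nabla_x \mathcal{V}_k(t,x) = 2 P_k(t)(x - \widehat{x}_k(t))$, the standard Danskin/Dubovitskii--Milyutin rule for the max of smooth functions gives
\begin{equation*}
    \partial f[t](x) = \mathrm{conv}\left\{ 2 P_k(t)(x - \widehat{x}_k(t)) : k \in I[t](x) \right\}.
\end{equation*}
The first order optimality condition for the unconstrained problem \eqref{eq: minmax_Fct} is $0 \in \partial f[t](\widehat{x}_\infty(t))$, i.e. $0$ lies in the convex hull of the active gradients $\{2 P_k(t)(\widehat{x}_\infty(t) - \widehat{x}_k(t))\}_{k \in I[t](\widehat{x}_\infty(t))}$. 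This is exactly where the cited results of \cite{MorNaVi13,NamNgSa12} enter, providing the optimality characterization in the clean form needed.

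Next I would invoke Carath\'eodory's theorem for convex hulls: since $0$ lies in the convex hull of a finite set of vectors in $\mathbb{R}^n$, it can be written as a convex combination of at most $n+1$ of them, and moreover one can choose these so that the chosen vectors, after removing $0$ from the combination, are affine independent (Steinitz's refinement of Carath\'eodory). Concretely, there is a subset $S(t) \subseteq I[t](\widehat{x}_\infty(t))$ with $|S(t)| = p \le n+1$ and $\alpha(t) \in [0,1]^p$ with $\sum_{k \in S(t)} \alpha_k(t) = 1$ such that $\sum_{k \in S(t)} \alpha_k(t)\, P_k(t)(\widehat{x}_\infty(t) - \widehat{x}_k(t)) = 0$, and such that the family $\{P_k(t)(\widehat{x}_\infty(t) - \widehat{x}_k(t))\}_{k\in S(t)}$ is affine independent. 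This yields (i) and (ii) directly.

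For (iii), I would simply rearrange the vanishing convex combination: expanding $\sum_{k \in S(t)} \alpha_k(t) P_k(t) \widehat{x}_\infty(t) = \sum_{k \in S(t)} \alpha_k(t) P_k(t) \widehat{x}_k(t)$ gives
\begin{equation*}
    \left( \sum_{k \in S(t)} \alpha_k(t) P_k(t) \right) \widehat{x}_\infty(t) = \sum_{k \in S(t)} \alpha_k(t) P_k(t) \widehat{x}_k(t),
\end{equation*}
and the matrix $\sum_{k \in S(t)} \alpha_k(t) P_k(t)$ is a nonnegative combination of symmetric positive definite matrices with weights summing to one, hence itself positive definite and invertible; inverting gives the stated formula. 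The main obstacle I anticipate is not any single step but getting the affine-independence refinement (ii) correctly: plain Carath\'eodory gives $p \le n+1$ with linear dependence of the vectors, but obtaining affine independence of $\{P_k(t)(\widehat{x}_\infty(t)-\widehat{x}_k(t))\}_{k\in S(t)}$ requires the sharper version (sometimes attributed to Steinitz), applied to the representation of $0$ in the convex hull; one must verify that the minimal such representing set indeed has affinely independent generators, which is the one place where a little care beyond textbook Carath\'eodory is needed. Everything else—smoothness of the $\mathcal{V}_k$, the subdifferential formula, positive definiteness of the weighted sum—is routine given \eqref{eq: VF_k} and \eqref{eq: P_smooth}.
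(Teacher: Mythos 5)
Your proposal is correct and follows essentially the same route as the paper's own proof: the subdifferential formula for a finite maximum of smooth convex functions, the optimality condition $0 \in \partial f[t](\widehat{x}_\infty(t))$, Carath\'eodory's theorem (in its affinely independent form) to select $S(t)$ with $\vert S(t)\vert \le n+1$, and rearrangement of the vanishing convex combination using positive definiteness of $\sum_{k\in S(t)}\alpha_k(t)P_k(t)$. The affine-independence refinement you flag as the delicate point is exactly what the paper also invokes, so no genuinely different ideas are involved.
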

\begin{proof}
    Fix an arbitrary $t \in [0,T]$. Then $\widehat{x}_\infty(t)$ is the unique solution of \eqref{eq: minmax_Fct}. As discussed in the proof of \Cref{lem: Inft_Existence} $f[t]$ is strictly convex, hence $\widehat{x}_\infty(t)$ satisfies the optimality condition \cite[Def.~5.1.~and (5.5)]{EkeTem76} given by 
    \begin{equation*}
        0 \in \partial f[t](\widehat{x}_\infty(t)),
    \end{equation*}
    where $\partial f[t](\bar{x})$ denotes the subdifferential of $f[t]$ at $\bar{x}$. The subdifferential of a maximum is well known, cf.~\cite[Prop.~4.5.2 and Rem.~4.5.3]{Schi07}. Denoting the convex hull of a set $X$ by $\mathrm{conv}(X)$ we indeed have
    \begin{equation*}
    \begin{aligned}
        \partial f[t](\bar{x})
        &= 
        \mathrm{conv} \{ P_k(t) (\widehat{x}_\infty(t) - \widehat{x}_k (t) ) \colon k \in I[t](\widehat{x}_\infty(t)) \}\\
        &=
        \mathrm{conv} \{ P_k(t) (\widehat{x}_\infty(t) - \widehat{x}_k (t) ) \colon k \in S(t) \}, 
    \end{aligned}
    \end{equation*}
    where the second equality is justified by Caratheodory's theorem with a set $S(t) \subset I[t](\widehat{x}_\infty(t))$ such that $(ii)$ holds. The affine independence in particular implies $\vert S(t) \vert \leq n+1$. The optimality condition now ensures existence of $\alpha(t) \in [0,1]^p$ such that $(i)$ holds and
    \begin{equation*}
        0 = \sum_{k \in S(t)} \alpha_k(t) P_k(t) (\widehat{x}_\infty(t) - \widehat{x}_k(t)).
    \end{equation*}
    Rearranging terms yields $(iii)$ and the proof is complete.
\end{proof}
\begin{remark}
    This representation does not yield any time regularity of $\widehat{x}_\infty$. Since there is no straight forward way of characterizing the time dependence of the coefficients $\alpha_k$, at this point we can not even deduce measurability of the estimator. A result regarding this is obtained via a convergence argument in \Cref{cor: min_inf_essBd} below. 
\end{remark}
The two presented estimators $\widehat{x}_0$ and $\widehat{x}_\infty$ are the two extremes on the spectrum of risk aversion. While the minimizer of the expected value $\widehat{x}_0$ does not take risk into account at all, the worst-case minimizer $\widehat{x}_\infty$ is by construction the most risk averse. In the following we introduce a concept filling the gap in between these two extremes.

%
\subsection{Minimizing the entropic risk of the energy}
%
In this subsection we introduce another estimator as the minimizer of the so-called \textit{entropic risk measure}. To the best of the authors' knowledge it was introduced in the context of financial mathematics, cf.~\cite{FoeSchi16}. We commence with a brief presentation of the concept and its properties for the setting of this work. For a more general definition and detailed derivation we refer to \cite{FoeSchi16}.
This functional is also important in the context of machine learning, cf.~for example \cite{CalGaPo20}.  
\begin{definition}\label[definition]{def: entrRisk}
    Let $X\colon \Sigma \to\mathbb{R}$ be a random variable on the probability space $(\Sigma,\mathcal{P}(\Sigma),P)$ defined in \Cref{subsec: ModUnc}. The entropic risk measure of $X$ with risk aversion $\theta \in (0,\infty)$ is defined as
    \begin{equation*}
    	\rho_\theta(X) 
    	=
    	\frac{1}{\theta} \ln \left( \mathbb{E} \left[ e^{\theta X} \right] \right)
    	=
    	\frac{1}{\theta} \ln \left( \frac{1}{N} \sum_{k=1}^N e^{\theta X(\sigma_k)} \right).
    \end{equation*}
\end{definition}
We proceed by presenting some well known properties of the entropic risk measure. While we do not claim any novelty, for the sake of completeness we give a proof tailored to the setting of this work. 
\begin{lemma}\label[lemma]{lem: propEntrRisk}
    For all random variables $X$ on the probability space $(\Sigma,\mathcal{P}(\Sigma),P)$ it holds 
    \begin{equation*}
        \rho_\theta(X) \to 
        \esssup_{\sigma \in \Sigma} X(\sigma) = \max_{k=1,\dots,N} X(\sigma_k) 
        ~~~~~~\text{for}~\theta \to \infty. 
    \end{equation*}
\end{lemma}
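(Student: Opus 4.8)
The plan is to bound $\rho_\theta(X)$ from above and below by expressions that both converge to $\max_k X(\sigma_k)$ as $\theta\to\infty$, and then invoke the squeeze theorem. Write $M = \max_{k=1,\dots,N} X(\sigma_k)$, which equals $\esssup_{\sigma\in\Sigma}X(\sigma)$ because the probability space is the finite space $(\Sigma,\mathcal P(\Sigma),P)$ with every singleton having positive mass $1/N$.

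First I would observe the elementary two-sided estimate on the sum inside the logarithm. Since $X(\sigma_k)\le M$ for every $k$, each exponential satisfies $e^{\theta X(\sigma_k)}\le e^{\theta M}$, so summing over the $N$ indices and dividing by $N$ gives $\frac1N\sum_{k=1}^N e^{\theta X(\sigma_k)}\le e^{\theta M}$. On the other hand, discarding all but a maximizing index yields $\frac1N\sum_{k=1}^N e^{\theta X(\sigma_k)}\ge \frac1N e^{\theta M}$. Both chains use only $\theta>0$ and monotonicity of $t\mapsto e^{t}$.

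Next I would apply $\frac1\theta\ln(\cdot)$, which is monotone increasing for $\theta>0$, to the two-sided estimate, obtaining
\begin{equation*}
    M - \frac{\ln N}{\theta} \;\le\; \rho_\theta(X) \;=\; \frac1\theta\ln\!\left(\frac1N\sum_{k=1}^N e^{\theta X(\sigma_k)}\right) \;\le\; M .
\end{equation*}
Letting $\theta\to\infty$ sends $\frac{\ln N}{\theta}\to 0$, so the left- and right-hand sides both tend to $M$, and the squeeze theorem forces $\rho_\theta(X)\to M = \max_{k}X(\sigma_k)$, which is the claim.

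There is essentially no serious obstacle here: the argument is a direct sandwich estimate, and the only point requiring a word of care is the identification of the essential supremum with the finite maximum, which is immediate from the definition of the discrete probability space in \Cref{subsec: ModUnc} (each elementary outcome $\sigma_k$ has mass $1/N>0$, so no outcome is null). Everything else is monotonicity of $\exp$ and $\ln$ together with $\ln N/\theta\to 0$.
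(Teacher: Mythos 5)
Your proposal is correct and follows essentially the same route as the paper's proof: a two-sided sandwich $M - \tfrac{\ln N}{\theta} \le \rho_\theta(X) \le M$ obtained by keeping only a maximizing term for the lower bound and using monotonicity of $\exp$ and $\ln$ for the upper bound, followed by letting $\theta \to \infty$. No differences worth noting.
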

\begin{proof}
    Let $X$ be such a fixed random variable and let $k^* \in \{ 1,\dots,N \}$ be such that $X(\sigma_{k^*}) = \max_{k = 1,\dots,N} X(\sigma_k) = \esssup_{\sigma \in \Sigma} X(\sigma)$. 
    Since the exponential is positive, for all $\theta \in (0,\infty)$ we have
    \begin{equation*}
        \frac{1}{N} \sum_{k=1}^N e^{\theta X(\sigma_k)} \geq \frac{1}{N} e^{\theta X(\sigma_{k^*})}.
    \end{equation*}
    Applying the logarithm on both sides and dividing by $\theta$ yields
    \begin{equation*}
        \rho_\theta(X) \geq X(\sigma_{k^*}) + \frac{\ln(\tfrac{1}{N})}{\theta}.
    \end{equation*}
    We hence find that for all $\theta \in (0,\infty)$ it holds
    \begin{equation*}
        X(\sigma_{k^*}) + \frac{\ln(\tfrac{1}{N})}{\theta}
        \leq
        \rho_{\theta}(X)
        \leq
        X(\sigma_{k^*}),
    \end{equation*}
    where the second estimate is justified by the monotonicity of the logarithm and the exponential. The assertion follows by considering the limit for $\theta \to \infty$.
\end{proof}

We now turn to the construction of the risk averse estimator. 
For its definition we consider the following minimization problem. For a fixed $t \in [0,T]$ consider
\begin{equation}\label{eq: minEntrRisk}
    \min_{x \in \mathbb{R}^n} \rho_\theta \left( \mathcal{V}_\sigma(t,x) \right)
    = 
    \min_{x \in \mathbb{R}^n}
    \frac{1}{\theta} 
    \ln \left(
        \frac{1}{N} \sum_{k=1}^N e^{\theta (\Vert x - \widehat{x}_k(t) \Vert_{P_k(t)}^2 + r_k(t))}
    \right).
\end{equation}
We proceed to formally define the third estimator as the minimizer of the entropic risk applied to the energy. 
\begin{definition}\label[definition]{def:minEntrR}
    Let $y \in \mathcal{L}_{0,T}^2$ and $t \in [0,T]$. The state estimator minimizing the entropic risk of the energy with risk aversion parameter $\theta \in (0,\infty)$ is denoted by $\widehat{x}_\theta(t)$ and defined as the solution of \eqref{eq: minEntrRisk}, assuming it exists and is unique.
\end{definition}
The following lemma shows that the estimator is well defined and characterizes it via an implicit equation. 
\begin{lemma}\label[lemma]{lem: charEntrMin}
    For every $t \in [0,T]$, $y \in \mathcal{L}_{0,T}^2$, and $\theta \in (0,\infty)$ there exists exactly one solution to the minimization problem \eqref{eq: minEntrRisk}. The solution, denoted by $\widehat{x}_\theta(t)$, satisfies
    \begin{equation}\label{eq: charEntrMin}
        \widehat{x}_\theta (t)
        =
        \left( \sum_{k=1}^N e^{\theta ( \Vert \widehat{x}_\theta (t) - \widehat{x}_k(t) \Vert_{P_k(t)}^2 + r_k(t) )} P_k(t) \right)^{-1}
        \sum_{k=1}^N e^{\theta ( \Vert \widehat{x}_\theta (t) - \widehat{x}_k(t) \Vert_{P_k(t)}^2 + r_k(t) )} P_k(t) \widehat{x}_k(t)
    \end{equation}
    and is the only element of $\mathbb{R}^n$ satisfying this relation.
\end{lemma}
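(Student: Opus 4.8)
The plan is to show that the objective in \eqref{eq: minEntrRisk} is a $C^\infty$, strictly convex, coercive function on $\mathbb{R}^n$, so that it possesses a unique minimizer characterized by the vanishing of its gradient, and then to rewrite that gradient equation in the fixed-point form \eqref{eq: charEntrMin}.

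First I would fix $t \in [0,T]$, $y$ and $\theta$, and set
\[
g(x) = \rho_\theta\bigl(\mathcal{V}_\sigma(t,x)\bigr) = \frac{1}{\theta}\ln\Bigl(\frac{1}{N}\sum_{k=1}^N e^{\theta \mathcal{V}_k(t,x)}\Bigr), \qquad \mathcal{V}_k(t,x) = \Vert x - \widehat{x}_k(t)\Vert_{P_k(t)}^2 + r_k(t).
\]
By \eqref{eq: P_smooth} each $P_k(t)$ is symmetric positive definite and $r_k(t)$ is a constant, so each $\mathcal{V}_k(t,\cdot)$ is a strictly convex, coercive, $C^\infty$ function of $x$, and hence so is $g$. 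Coercivity of $g$ follows from the estimate already used in the proof of \Cref{lem: propEntrRisk}: since $\tfrac{1}{N}\sum_k e^{\theta\mathcal{V}_k(t,x)} \geq \tfrac{1}{N}e^{\theta\max_k \mathcal{V}_k(t,x)}$, we get $g(x) \geq \max_k \mathcal{V}_k(t,x) - \theta^{-1}\ln N \to \infty$ as $\Vert x\Vert \to \infty$. For strict convexity I would use that the log-sum-exp map $z \mapsto \ln\sum_k e^{z_k}$ is convex and strictly increasing in each coordinate: for $x \neq x'$ and $\lambda \in (0,1)$, strict convexity of every $\mathcal{V}_k(t,\cdot)$ gives $\mathcal{V}_k(t,\lambda x + (1-\lambda)x') < \lambda\mathcal{V}_k(t,x) + (1-\lambda)\mathcal{V}_k(t,x')$ for all $k$, and applying first the coordinatewise monotonicity and then convexity of log-sum-exp yields $g(\lambda x + (1-\lambda)x') < \lambda g(x) + (1-\lambda)g(x')$. (Equivalently, a direct computation bounds $\nabla^2 g$ from below by $\tfrac{2}{Z(x)}\sum_k e^{\theta\mathcal{V}_k(t,x)}P_k(t)$, which is positive definite.) Strict convexity together with coercivity and continuity yields existence and uniqueness of a minimizer, which is $\widehat{x}_\theta(t)$.

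Next I would extract the first order condition. Since $g$ is convex and $C^\infty$ on all of $\mathbb{R}^n$, the point $\widehat{x}_\theta(t)$ is its minimizer if and only if $\nabla g(\widehat{x}_\theta(t)) = 0$. Writing $Z(x) = \sum_k e^{\theta\mathcal{V}_k(t,x)}$ and $\nabla_x\mathcal{V}_k(t,x) = 2P_k(t)(x - \widehat{x}_k(t))$, the chain rule gives
\[
\nabla g(x) = \frac{1}{Z(x)}\sum_{k=1}^N e^{\theta\mathcal{V}_k(t,x)}\,\nabla_x\mathcal{V}_k(t,x) = \frac{2}{Z(x)}\sum_{k=1}^N e^{\theta\mathcal{V}_k(t,x)}P_k(t)\,(x-\widehat{x}_k(t)).
\]
Evaluating at $x = \widehat{x}_\theta(t)$ and dropping the positive scalar factor $2/Z(\widehat{x}_\theta(t))$ leaves $\sum_k e^{\theta\mathcal{V}_k(t,\widehat{x}_\theta(t))}P_k(t)(\widehat{x}_\theta(t) - \widehat{x}_k(t)) = 0$. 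The matrix $\sum_k e^{\theta\mathcal{V}_k(t,\widehat{x}_\theta(t))}P_k(t)$ is a strictly positive combination of symmetric positive definite matrices, hence invertible; solving for $\widehat{x}_\theta(t)$ and substituting $\mathcal{V}_k(t,\widehat{x}_\theta(t)) = \Vert\widehat{x}_\theta(t)-\widehat{x}_k(t)\Vert_{P_k(t)}^2 + r_k(t)$ produces exactly \eqref{eq: charEntrMin}.

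Finally, for uniqueness of the fixed-point relation: if some $x \in \mathbb{R}^n$ satisfies \eqref{eq: charEntrMin}, multiplying through by the invertible matrix $\sum_k e^{\theta\mathcal{V}_k(t,x)}P_k(t)$ and rearranging shows $\nabla g(x) = 0$, and strict convexity of $g$ forces $x = \widehat{x}_\theta(t)$; thus \eqref{eq: charEntrMin} has exactly one solution. The only genuinely delicate step is the strict convexity of $g$: log-sum-exp is not strictly convex on its own (it is affine along the diagonal direction), so one must genuinely exploit the strict convexity of the inner quadratics $\mathcal{V}_k(t,\cdot)$ — equivalently, the positive definiteness of $\sum_k e^{\theta\mathcal{V}_k(t,x)}P_k(t)$ — while everything else is routine.
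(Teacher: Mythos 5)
Your proposal is correct and follows essentially the same route as the paper: establish strict convexity and coercivity of the entropic risk objective, deduce a unique minimizer, and convert the first-order condition $\nabla g=0$ into the fixed-point form \eqref{eq: charEntrMin}. The only (minor) difference is that the paper proves strict convexity by an explicit Hessian computation and a term-by-term positivity argument, whereas you use the composition of the strictly convex quadratics with the monotone convex log-sum-exp map (correctly noting that log-sum-exp alone is not strictly convex); both arguments are valid, and your explicit treatment of uniqueness of the fixed-point relation is a small bonus over the paper's terser ending.
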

\begin{proof}
    Let $t \in [0,T]$ be fixed. For the purpose of this proof we define $F\colon \mathbb{R}^n \to \mathbb{R}$ via
    \begin{equation*}
        F(x) \coloneqq
        \frac{1}{\theta} \ln \left( \frac{1}{N} \sum_{k=1}^N e^{\theta (\Vert x - \widehat{x}_k(t) \Vert_{P_k(t)}^2 + r_k(t))} \right).
    \end{equation*}
    In the following we show that $F$ is strictly convex. To that end note that it is smooth and the chain rule yields its gradient as
    \begin{equation}\label{eq: gradientEntrR}
        \nabla F(x)
        =
        2 
        \sum_{k=1}^N c_k[\theta,x](t)~ P_k(t) \widehat{x}_k(t).
    \end{equation}
    To characterize the Hessian we denote the outer product of a vector $x \in \mathbb{R}^n$ with itself via $[x] _\otimes^2 = x \otimes x = x x^\top \in \mathbb{R}^{n,n}$ and define 
    \begin{equation}\label{eq: constants_e}
        0
        \leq
        c_k[\theta,x](t) = \frac{e^{\theta ( \Vert x - \widehat{x}_k(t) \Vert_{P_k(t)}^2 + r_k(t) )}}
        {\sum_{j = 1}^N e^{\theta ( \Vert x - \widehat{x}_j(t) \Vert_{P_j(t)}^2 + r_j(t) )}}
        \leq 
        1.
    \end{equation}
    Note that for all $x$ and $t$ it holds $\sum_{k=1}^N c_k[\theta,x](t) = 1$. The chain rule then yields the Hessian as
    \begin{equation*}
    \begin{aligned}
        \nabla^2 F(x)
        &=
        2 \sum_{k=1}^N c_k[\theta,x](t) P_k(t)
        +
        4 \sum_{k=1}^N c_k[\theta,x](t)
        \left[P_k(t) (x - \widehat{x}_k(t))\right]_\otimes^2 \\
        &-
        4 \theta \left[ \sum_{k=1}^N c_k[\theta,x](t) P_k(t) \widehat{x}_k(t) \right]_\otimes^2.
    \end{aligned}
    \end{equation*}
    Now for any $z \in \mathbb{R}^n \backslash \{ 0 \}$ we find
    \begin{equation*}
    \begin{aligned}
        z^\top \nabla^2 F(x) z
        &= 
        2 \sum_{k=1}^N c_k[\theta,x](t)~ z^\top P_k(t) z
        +
        4 \theta \sum_{k=1}^N c_k[\theta,x](t) \Vert z^\top P_k(t) (x - \widehat{x}_k(t)) \Vert^2\\
        &-
        4 \theta \Vert \sum_{k=1}^N c_k[\theta,x](t) z^\top P_k(t) \widehat{x}_k(t)  \Vert^2.
    \end{aligned}
    \end{equation*}
    Due to the positive definiteness of $P_k$ the first summand is greater than zero. Since the $c_k$ are non negative and sum up to one, the convexity of the squared norm ensures that the sum of the last two terms is greater or equal than zero. Hence, the Hessian $\nabla^2 F(x)$ is positive definite implying strict convexity of $F$.
    One can quickly verify that $F$ also is coercive ensuring that it admits exactly one minimizer. Further, any element $\bar{x} \in \mathbb{R}^n$ minimizes $F$ if and only if $\nabla F (\bar{x}) = 0$. Setting the gradient given in \eqref{eq: gradientEntrR} to zero and performing standard calculus yields the claimed equation for $\widehat{x}_\infty(t)$.
\end{proof}
Note that an alternative to the characterization via \eqref{eq: charEntrMin} is given by 
\begin{equation}\label{eq: charMinEntrR2}
    \widehat{x}_\theta (t)
    =
    \left( \sum_{k=1}^N c_k[\theta,\widehat{x}_\theta(t)](t) P_k(t) \right)^{-1}
    \sum_{k=1}^N c_k[\theta,\widehat{x}_\theta(t)](t) P_k(t) \widehat{x}_k(t)
\end{equation}
with the coefficients given as in \eqref{eq: constants_e} satisfying $\sum_{k=1}^N c_k = 1$ and $c_k \in [0,1]$. 

We conclude the section by stating a result on the time regularity of $\widehat{x}_\theta$ depending on the regularity of the output $y$. The proof is presented in \Cref{subs: proof_reg_EntrR_Min}. 
\begin{theorem}\label[theorem]{thm: regularity_x_theta}
    Let $\theta \in (0,\infty)$.
    \begin{enumerate}
        \item Assuming $y \in \mathcal{L}_{0,T}^{2p}$ with an integer $1 \leq p <\infty$ it holds $\widehat{x}_\theta \in W^{1,p}(0,T;\mathbb{R}^n)$.
        \vspace{1mm}\\
        \item For $y \in C([0,T];\mathbb{R}^r)$ it holds $\widehat{x}_\theta \in C^1([0,T];\mathbb{R}^n)$.
    \end{enumerate}
    In both settings the (weak) derivative is given via the formula 
    \begin{equation}\label{eq: min_entrR_TimeDer}
        \dot{\widehat{x}}_\theta(t)
        =
        - M(t,\widehat{x}_\theta(t))^{-1}
        V(t,\widehat{x}_\theta(t)),
    \end{equation}
    where 
    \begin{equation}\label{eq: M_min_entrR_TimeDer}
    \begin{aligned}
        M(x,t)
        =
        \sum_{k=1}^N  e^{\theta \mathcal{V}_k(t,x)}
        \left(
        P_k(t) + 
        2 \theta P_k(t) (x - \widehat{x}_k(t)) \otimes (x - \widehat{x}_k(t)) P_k(t) 
        \right),
    \end{aligned}
    \end{equation}
    and
    \begin{equation}\label{eq: V_min_entrR_TimeDer}
    \begin{aligned}
        V(t,x) 
        &=  
        - \sum_{k=1}^N e^{\theta \mathcal{V}_k(t,x)} \,
        \left( P_k(t) \dot{\widehat{x}}_k(t)
        - \dot{P}_k(t) (x - \widehat{x}_k(t) ) \right)\\
        &+ \theta \sum_{k=1}^N e^{\theta \mathcal{V}_k(t,x)} \left(
        \Vert y(t) - C \widehat{x}_k(t) \Vert_{Q_k^{-1}}^2
        - 2 \langle x - \widehat{x}_k(t) , P_k(t) \dot{\widehat{x}}_k(t) \rangle
        \right)
        P_k(t) (x - \widehat{x}_k(t)).
    \end{aligned}
    \end{equation}
\end{theorem}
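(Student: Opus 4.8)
The plan is to treat $\widehat{x}_\theta(t)$ as an implicit function of $t$. By \Cref{lem: charEntrMin} the estimator $\widehat{x}_\theta(t)$ is the unique zero in $x$ of
\[
  G(t,x) \coloneqq \sum_{k=1}^N e^{\theta \mathcal{V}_k(t,x)}\, P_k(t)\bigl(x-\widehat{x}_k(t)\bigr),
\]
which is $\nabla F$ (with $F$ as in \Cref{lem: charEntrMin}) up to a positive scalar factor. Two routine computations underlie everything. Differentiating $G$ in $x$ and using $\nabla_x\mathcal{V}_k(t,x)=2P_k(t)(x-\widehat{x}_k(t))$ produces the $x$-Jacobian $M(t,x)$ of \eqref{eq: M_min_entrR_TimeDer}; it is positive definite, being the sum of the positive definite matrix $\sum_k e^{\theta\mathcal{V}_k(t,x)}P_k(t)$ and a positive semidefinite term, and $\Vert M(t,x)^{-1}\Vert$ is bounded uniformly over $t\in[0,T]$ and $x$ in bounded sets, using that $\lambda_{\min}(P_k(t))$ is bounded below on $[0,T]$ and $\mathcal{V}_k$ is bounded on bounded sets. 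Differentiating $G$ in $t$ by the product rule, with $\partial_t\mathcal{V}_k$ computed from \eqref{eq: KF_k}--\eqref{eq: Ricc_k} and $\dot r_k(t)=\Vert y(t)-C\widehat{x}_k(t)\Vert_{Q_k^{-1}}^2$, yields $V(t,x)$ of \eqref{eq: V_min_entrR_TimeDer}. Granting the claimed regularity, formula \eqref{eq: min_entrR_TimeDer} is then immediate from differentiating the identity $G(t,\widehat{x}_\theta(t))\equiv 0$, which gives $0=V(t,\widehat{x}_\theta(t))+M(t,\widehat{x}_\theta(t))\dot{\widehat{x}}_\theta(t)$.

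Before invoking any implicit function argument I would establish the preliminary fact that $\widehat{x}_\theta\in C([0,T];\mathbb{R}^n)$ already when $y\in\mathcal{L}_{0,T}^2$. Indeed $\widehat{x}_k$, $P_k$ and $r_k=\int_0^{\cdot}\Vert y-C\widehat{x}_k\Vert_{Q_k^{-1}}^2$ depend continuously on $t$, so the functionals $F_t$ converge uniformly on compact sets as $t$ varies, while their coercivity is uniform in $t\in[0,T]$ because $\lambda_{\min}(P_k(t))$ is bounded below; unique minimizers of strictly convex, uniformly coercive, locally uniformly convergent functionals then converge. This is precisely the step that fails for $\widehat{x}_\infty$ (cf.\ the remark after \Cref{thm: char_inft}), and it is what makes the composition $t\mapsto V(t,\widehat{x}_\theta(t))$ meaningful and keeps $M(t,\widehat{x}_\theta(t))^{-1}$ uniformly bounded on $[0,T]$.

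For part (2), $y\in C([0,T];\mathbb{R}^r)$ makes each $\widehat{x}_k\in C^1$, hence $r_k\in C^1$ and $G\in C^1([0,T]\times\mathbb{R}^n;\mathbb{R}^n)$; since $\partial_x G=M$ is invertible, the classical implicit function theorem upgrades the continuity of $\widehat{x}_\theta$ to $\widehat{x}_\theta\in C^1$ with derivative \eqref{eq: min_entrR_TimeDer}. For part (1) with $y\in\mathcal{L}_{0,T}^{2p}$, variation of constants in \eqref{eq: KF_k} gives $\widehat{x}_k\in W^{1,2p}(0,T;\mathbb{R}^n)\subset L^\infty$, whence $\dot r_k=\Vert y-C\widehat{x}_k\Vert_{Q_k^{-1}}^2\in L^p$ and $r_k\in W^{1,p}(0,T)$; propagating these through the product rule shows that for each bounded $x$ one has $V(\cdot,x)\in L^p(0,T;\mathbb{R}^n)$, with an $L^p$-bound $g\in L^p(0,T)$ uniform over bounded $x$, and likewise $\partial_x V(\cdot,x)$ is dominated by such a $g$. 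Here the classical implicit function theorem no longer applies, since $G(\cdot,x)$ is only $W^{1,p}$, not $C^1$, in $t$, so I would argue directly. Splitting the identity $0=G(t+h,\widehat{x}_\theta(t+h))-G(t,\widehat{x}_\theta(t))$ into an $x$-increment and a $t$-increment gives
\[
  \widehat{x}_\theta(t+h)-\widehat{x}_\theta(t) = -\,\widetilde M_h^{-1}\int_t^{t+h} V\bigl(s,\widehat{x}_\theta(t)\bigr)\, ds ,
\]
with $\widetilde M_h=\int_0^1 M\bigl(t+h,\widehat{x}_\theta(t)+s(\widehat{x}_\theta(t+h)-\widehat{x}_\theta(t))\bigr)\, ds$ uniformly invertible; this already yields absolute continuity of $\widehat{x}_\theta$ via the bound $\vert\widehat{x}_\theta(t+h)-\widehat{x}_\theta(t)\vert\le C\int_t^{t+h}g$.

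Dividing the last identity by $h$ and letting $h\to 0$, continuity of $\widehat{x}_\theta$ gives $\widetilde M_h\to M(t,\widehat{x}_\theta(t))$; replacing $\widehat{x}_\theta(t)$ by $\widehat{x}_\theta(s)$ in the integrand at a cost controlled by $\omega(h)\cdot\tfrac1h\int_t^{t+h}g$ (with $\omega$ the modulus of continuity of $\widehat{x}_\theta$) and then applying Lebesgue differentiation to the $L^p$ function $s\mapsto V(s,\widehat{x}_\theta(s))$, the averaged integral converges to $V(t,\widehat{x}_\theta(t))$ at a.e.\ $t$. This identifies the a.e.\ derivative of $\widehat{x}_\theta$ as \eqref{eq: min_entrR_TimeDer} and places it in $L^p$, i.e.\ $\widehat{x}_\theta\in W^{1,p}(0,T;\mathbb{R}^n)$. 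I expect the main obstacle to be exactly this low-regularity implicit function step: one cannot cite a ready-made theorem because $y$ enters $G$ only through the $W^{1,p}$ functions $r_k$ and $\widehat{x}_k$, so the difference-quotient computation must be combined with a Lebesgue-point analysis of the \emph{composed} map $s\mapsto V(s,\widehat{x}_\theta(s))$, which is why the preliminary continuity of $\widehat{x}_\theta$ and the uniform $L^p$-domination of $V$ and $\partial_x V$ in the second argument are genuinely needed. The remaining ingredients — the explicit forms of $M$ and $V$, the positive definiteness of $M$, and $\widehat{x}_k\in W^{1,2p}$ — are routine.
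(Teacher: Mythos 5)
Your proposal is correct, and for part (2) it coincides with the paper's argument: both set up the implicit equation $G(t,x)=0$ for the gradient of the entropic risk, verify that $\partial_x G=M$ is uniformly positive definite with uniformly bounded inverse, and invoke the classical implicit function theorem for continuous $y$. For part (1) you take a genuinely different route. The paper never differentiates the implicit equation at a rough $y$: it approximates $y\in\mathcal{L}_{0,T}^{2p}$ by continuous outputs $y_j$, applies part (2) to each $y_j$, establishes stability of $\widehat{x}_k[\cdot]$, $r_k[\cdot]$, $\widehat{x}_\theta[\cdot]$ and of the derivative formula under $y_j\to y$ in $\mathcal{L}_{0,T}^2$ (\Cref{lem: ContInYEnergy}, \Cref{prop: LpConv_theta}, \Cref{lem: conv_derivative}), and then identifies the $\mathcal{L}_{0,T}^1$ limit of $\dot{\widehat{x}}_\theta[y_j]$ as the weak derivative by testing against $C_0^\infty$ functions, reading off $L^p$ membership from the formula at the end. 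You instead stay at the fixed rough $y$ and run a difference-quotient argument on $0=G(t+h,\widehat{x}_\theta(t+h))-G(t,\widehat{x}_\theta(t))$, using absolute continuity of $s\mapsto G(s,x)$ to get absolute continuity of $\widehat{x}_\theta$ and then identifying the a.e.\ derivative at Lebesgue points of $s\mapsto V(s,\widehat{x}_\theta(s))$. Your route is self-contained in $t$ but needs the preliminary continuity of $t\mapsto\widehat{x}_\theta(t)$ (which you supply via uniform coercivity and locally uniform convergence of the functionals, an ingredient the paper replaces by pointwise-in-$y$ stability) together with more delicate measure-theoretic bookkeeping; the paper's density argument is more modular and reuses part (2), at the price of three auxiliary continuity lemmas. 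One small point to reconcile: a full product rule in $t$ also produces the term $\theta\,(x-\widehat{x}_k(t))^\top\dot{P}_k(t)(x-\widehat{x}_k(t))\,P_k(t)(x-\widehat{x}_k(t))$ inside the second sum, which is absent from the displayed $V$ in \eqref{eq: V_min_entrR_TimeDer} and from \eqref{eq: F_t_derivative}; this is harmless for the regularity claims but sits in tension with your assertion that the computation directly ``yields'' the stated $V$.
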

%
\section{Continuity of the state estimator in the risk aversion parameter}
%
In this section we analyze the behavior of the limit cases of the risk aversion parameter. We find that indeed, $\widehat{x}_\theta$ fills the gap between $\widehat{x}_0$ and $\widehat{x}_\infty$ in the sense that the estimators are continuous with respect to the risk aversion, in particular for the extreme cases.  
\begin{proposition}\label[proposition]{prop: pntwConv_in_theta}
    Let $t \in [0,T]$ be fixed. It holds
    \begin{equation*}
    \begin{alignedat}{2}
        \widehat{x}_\theta(t) &\to \widehat{x}_0(t), ~&&\theta \to 0,\\
        \widehat{x}_\theta(t) &\to \widehat{x}_\infty(t), ~~~&&\theta \to \infty,
    \end{alignedat}
    \end{equation*}
    where $\widehat{x}_0$ and $\widehat{x}_\infty$ are the estimators introduced in \Cref{subs: MinExp} and \Cref{def: minWorstCase}, respectively.
    Moreover $\theta\mapsto \widehat{x}_\theta(t)$ is continuous in $(0,\infty)$.
\end{proposition}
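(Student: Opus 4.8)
The plan is to prove all three claims through a single stability-of-minimizers argument for convex functions. Fix $t$ and abbreviate $F_\theta(x) = \rho_\theta(\mathcal{V}_\sigma(t,x)) = \tfrac{1}{\theta}\ln\big(\tfrac{1}{N}\sum_k e^{\theta\mathcal{V}_k(t,x)}\big)$ for $\theta\in(0,\infty)$, $F_0(x) = \mathbb{E}[\mathcal{V}_\sigma(t,x)] = \tfrac1N\sum_k \mathcal{V}_k(t,x)$, and $f[t](x) = \max_k\mathcal{V}_k(t,x)$; by \eqref{eq: VF_k} each $\mathcal{V}_k(t,\cdot)$ is a positive-definite quadratic plus a constant, so $F_0$ and $f[t]$ are strictly convex and coercive, and their unique minimizers are $\widehat{x}_0(t)$ (cf.~\Cref{subs: MinExp} and \Cref{prop: charEstMin}) and $\widehat{x}_\infty(t)$ (\Cref{lem: Inft_Existence}), while $\widehat{x}_\theta(t)$ is the unique minimizer of $F_\theta$ by \Cref{lem: charEntrMin}. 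The first step is the elementary two-sided estimate
\begin{equation*}
    F_0(x)\ \le\ F_\theta(x)\ \le\ f[t](x)\qquad\text{for all }x\in\mathbb{R}^n,\ \theta>0,
\end{equation*}
whose left inequality is Jensen's inequality for the convex map $u\mapsto e^{\theta u}$ and whose right inequality follows from $\mathbb{E}[e^{\theta\mathcal{V}_\sigma(t,x)}]\le e^{\theta f[t](x)}$. Since $\mathcal{V}_k(t,x)\ge\lambda_{\min}(P_k(t))\,\|x-\widehat{x}_k(t)\|^2+r_k(t)$, the lower bound makes $\{F_\theta\}_{\theta>0}$ equi-coercive; combined with $F_\theta(\widehat{x}_\theta(t))\le F_\theta(\widehat{x}_0(t))\le f[t](\widehat{x}_0(t))$ this confines the whole family $\{\widehat{x}_\theta(t)\}_{\theta>0}$ to a fixed compact set $K_0\subset\mathbb{R}^n$, which is what will make the subsequence extractions below legitimate.

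Second, I would identify the limits of $F_\theta$, uniformly on compact sets. For $\theta\to\infty$ the chain of inequalities in the proof of \Cref{lem: propEntrRisk} already gives $0\le f[t](x)-F_\theta(x)\le \theta^{-1}\ln N$ for every $x$, so $F_\theta\to f[t]$ uniformly on $\mathbb{R}^n$. For $\theta\to0$ I would record the standard limit $\rho_\theta(X)\to\mathbb{E}[X]$ by a short expansion: writing $X=\mathbb{E}[X]+\widetilde{X}$ with $\mathbb{E}[\widetilde{X}]=0$ one has $\mathbb{E}[e^{\theta\widetilde{X}}]=1+\tfrac{\theta^2}{2}\operatorname{Var}(X)+o(\theta^2)$, hence $F_\theta(x)=F_0(x)+\tfrac{\theta}{2}\operatorname{Var}(\mathcal{V}_\sigma(t,x))+o(\theta)$, with the error uniform for $x$ in a compact set because $x\mapsto\mathcal{V}_k(t,x)$ is bounded there; so $F_\theta\to F_0$ uniformly on compacts. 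Finally, for $\theta_0\in(0,\infty)$ and $\theta\to\theta_0$, joint continuity of $(\theta,x)\mapsto\tfrac{1}{\theta}\ln(\tfrac1N\sum_k e^{\theta\mathcal{V}_k(t,x)})$ on $(0,\infty)\times\mathbb{R}^n$ together with compactness gives $F_\theta\to F_{\theta_0}$ uniformly on compacts.

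Third comes the stability argument, applied identically in the three regimes. Let $\theta_j$ tend to $0$ (respectively to $\infty$, respectively to $\theta_0\in(0,\infty)$) and let $G$ denote the corresponding limit function $F_0$ (resp.~$f[t]$, resp.~$F_{\theta_0}$), with unique minimizer $x^\star$ equal to $\widehat{x}_0(t)$ (resp.~$\widehat{x}_\infty(t)$, resp.~$\widehat{x}_{\theta_0}(t)$). The sequence $\widehat{x}_{\theta_j}(t)\in K_0$ has a subsequence converging to some $\bar{x}\in K_0$; passing to the limit in $F_{\theta_j}(\widehat{x}_{\theta_j}(t))\le F_{\theta_j}(x)$ for arbitrary $x$, using uniform convergence on a compact set containing $K_0$ and $x$ together with continuity of $G$, yields $G(\bar{x})\le G(x)$, so $\bar{x}=x^\star$ by uniqueness of the minimizer. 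Since every subsequence admits a further subsequence with this same limit, the full sequence $\widehat{x}_{\theta_j}(t)$ converges to $x^\star$; as $\theta_j$ was arbitrary, this proves the two limit statements, and applied to arbitrary sequences $\theta_j\to\theta_0\in(0,\infty)$ it proves continuity of $\theta\mapsto\widehat{x}_\theta(t)$ on $(0,\infty)$.

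I expect the only point needing real care to be the equi-coercivity and the resulting uniform-in-$\theta$ confinement of the minimizers to one compact set; everything else is a routine application of convex-analytic stability, with the $\theta\to0$ expansion of $\rho_\theta$ being the one ingredient not already available in the excerpt. As an alternative route to the interior continuity one could apply the implicit function theorem to the optimality equation $\nabla_x F_\theta(\widehat{x}_\theta(t))=0$, whose $x$-Hessian is invertible precisely by the strict convexity established in the proof of \Cref{lem: charEntrMin}; this would even yield $C^1$-dependence of $\widehat{x}_\theta(t)$ on $\theta$, but I would keep the convex-stability argument as the main line since it settles the two boundary limits simultaneously.
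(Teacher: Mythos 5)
Your proof is correct, but it follows a genuinely different route from the paper's. You run a single stability-of-minimizers (epi-convergence) argument built on the sandwich $F_0\le F_\theta\le f[t]$: the left inequality gives equi-coercivity and confines all minimizers to one compact set, the uniform convergence $F_\theta\to f[t]$ (rate $\theta^{-1}\ln N$), $F_\theta\to F_0$ (cumulant expansion), and $F_\theta\to F_{\theta_0}$ on compacts then settles all three regimes at once. The paper instead works throughout with the explicit fixed-point representations \eqref{eq: charEntrMin} and \eqref{eq: charMinEntrR2}: for $\theta\to0$ it passes to the limit directly in \eqref{eq: charEntrMin} using the uniform bound of \Cref{lem: UnifBoundTheta}; for $\theta\to\infty$ it extracts convergent coefficients $c_k[\theta_g,\widehat{x}_{\theta_g}(t)](t)\to\beta_k(t)$ and only then invokes a value-comparison argument (close in spirit to your stability step, but localized) to identify the limit as $\widehat{x}_\infty(t)$. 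Your version is more self-contained — it derives the compactness of the minimizer family from the Jensen bound rather than from the appendix lemmas, and it yields quantitative information (the $\theta^{-1}\ln N$ gap and the variance correction $\tfrac{\theta}{2}\operatorname{Var}(\mathcal{V}_\sigma(t,x))$) as a byproduct. What it does not deliver is the representation $\widehat{x}_\infty(t)=\bigl(\sum_k\beta_k(t)P_k(t)\bigr)^{-1}\sum_k\beta_k(t)P_k(t)\widehat{x}_k(t)$ with measurable limit coefficients $\beta_k(t)$, which the paper extracts from its subsequence construction and then reuses as the essential ingredient of \Cref{cor: min_inf_essBd} to establish measurability and essential boundedness of $\widehat{x}_\infty$; if you adopted your argument wholesale, that corollary would need a separate proof. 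Two small points to keep honest: the $\theta\to0$ expansion needs the uniformity of the $o(\theta)$ term on compacts, which you correctly obtain from the finiteness of $\Sigma$ and boundedness of $\mathcal{V}_k(t,\cdot)$ on compacts; and your alternative implicit-function-theorem remark for interior continuity is consistent with the invertibility of the Hessian established in the proof of \Cref{lem: charEntrMin}.
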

\begin{proof}
    To show the first limit we note that due to \Cref{lem: estWeightNorm} and \Cref{lem: UnifBoundTheta} in the appendix we have that $ \Vert \widehat{x}_\theta(t) - \widehat{x}_k(t) \Vert_{P_k(t)}^2 $ is bounded with respect to $\theta$. Since additionally $r_k[y](t)$ is independent of $\theta$, we find 
    \begin{equation*}
        \theta \left( \Vert \widehat{x}_\theta(t) - \widehat{x}_k(t) \Vert_{P_k(t)}^2 + r_k[y](t) \right)
        \to 0
        ~~~\text{for}~ \theta \to 0.
    \end{equation*}
    Since all involved operations are continuous, passing to the limit in \eqref{eq: charEntrMin} reveals
    \begin{equation*}
        \widehat{x}_\theta(t) 
        \to
        \left( \sum_{k=1}^N P_k(t) \right)^{-1} ~ \sum_{k=1}^N P_k(t)~ \widehat{x}_k (t)
        ~~~\text{for}~\theta \to 0
    \end{equation*}
    and the first claim follows with \Cref{prop: charEstMin}.

    To show the second convergence we utilize the characterization of $\widehat{x}_\theta(t)$ given in \eqref{eq: charMinEntrR2} which reads as
    \begin{equation}\label{eq: charMinEntrAlt}
        \widehat{x}_\theta(t)
        =
        \left( \sum_{k=1}^N c_k[\theta,\widehat{x}_\theta(t)](t) P_k(t) \right)^{-1}
        \sum_{k=1}^N c_k[\theta,\widehat{x}_\theta(t)](t) P_k(t) \widehat{x}_k(t),
    \end{equation}
    where the coefficients $0 \leq c_k[\theta,x](t) \leq 1$ are defined as in \eqref{eq: constants_e}.
    Now let $\theta_l$ be an arbitrary sequence such that $\theta_l \to \infty$ for $l \to \infty$. Since the $c_k[\theta_l,\widehat{x}_{\theta_l}(t)](t)$ are from a compact interval, any subsequence admits yet another subsequence $\theta_g$ such that for any $k$ we have that $c_k[\theta_g,\widehat{x}_{\theta_g}(t)](t) $ converges for $g \to \infty$. We denote the corresponding limits by $\beta_k(t)$. Since by construction for every $g \in \mathbb{N}$ and $t \in [0,T]$ the elements $\{c_k[\theta_g,\widehat{x}_{\theta_g}(t)](t) \}_{k=1}^N$ are non negative and sum up to one, the same holds for $\{ \beta_k(t) \}_{k=1}^N$. With \eqref{eq: charMinEntrAlt} it follows
    \begin{equation*}
        \lim_{g \to \infty}
        \widehat{x}_{\theta_g}(t)
        =
        \left( \sum_{k=1}^N \beta_k(t) P_k(t) \right)^{-1}
        \sum_{k=1}^N \beta_k(t) P_k(t) \widehat{x}_k(t) \eqqcolon \bar{x}.
    \end{equation*}
    Note that at this point $\bar{x}$ may depend on the choice of the subsequence $\theta_g$. 
    To remedy this, we now show that in fact $\bar{x} = \widehat{x}_\infty(t)$. To that end denote 
    \begin{equation*}
    F[\theta](x) 
    =
    \frac{1}{\theta} 
    \ln \left( \frac{1}{N} \sum_{k=1}^N e^{\theta (\Vert x - \widehat{x}_k(t) \Vert_{P_k(t)}^2 + r_k(t))} \right)
    ~~~~~
    \text{and}
    ~~~~~
    F_\infty(x) = \max_{k} \mathcal{V}_k(t,x)     
    \end{equation*}
    and consider
    \begin{equation}\label{eq: proofEst}
        \left\Vert
        F[\theta_g](\widehat{x}_{\theta_g}(t))
        -
        F_\infty(\bar{x}) 
        \right\Vert
        \leq
        \left\Vert
        F[\theta_g](\widehat{x}_{\theta_g}(t))
        -
        F[\theta_g](\bar{x})
        \right\Vert
        +
        \left\Vert 
        F[\theta_g](\bar{x})
        -
        F_\infty(\bar{x})
        \vphantom{
            F[\theta_g](\widehat{x}_{\theta_g}(t))
        }
        \right\Vert.
    \end{equation}
    Utilizing \Cref{lem: propEntrRisk} we find that the second summand on the right hand side converges to zero. To show convergence of the first summand on the right hand side we first estimate the gradient $\Vert \nabla F[\theta](x) \Vert$ for some $x \in \mathbb{R}^n$ and $\theta \in (0,\infty)$. We have
    \begin{equation*}
        \Vert \nabla F[\theta](x) \Vert
        = 2
        \left\Vert 
            \sum_{k=1}^N c_k[\theta,x](t) P_k(t) \widehat{x}_k(t)
        \right\Vert,
    \end{equation*}
    where the coefficients $c_k[\theta,x]$ are defined as above and hence bounded by one. Due to the continuity of $P_k$ and $\widehat{x}_k$ there exists a constant $L>0$ independent of $t$, $\theta$, $x$, and $k$ such that 
    \begin{equation*}
        \Vert \nabla F[\theta](x) \Vert \leq L.
    \end{equation*}
    We now apply Taylor's theorem \cite[Thm.~4.C]{Zei95AMS109} and obtain 
    \begin{equation*}
    \begin{aligned}
        \left\Vert 
        F[\theta_g](\widehat{x}_{\theta_g}(t))
        -
        F[\theta_g](\bar{x})
        \right\Vert
        &\leq
        \int_0^1 \Vert \nabla F[\theta_g](\tau \widehat{x}_{\theta_g}(t) + (1-\tau) \bar{x} ) \Vert \, \mathrm{d}\tau
        ~
        \Vert \widehat{x}_{\theta_g}(t) - \bar{x} \Vert\\
        &\leq
        L \Vert \widehat{x}_{\theta_g}(t) - \bar{x} \Vert,
    \end{aligned}
    \end{equation*}
    where the convergence of $\widehat{x}_{\theta_g}(t)$ implies convergence to zero of the right hand side.
    We have therefore shown that the right hand side of \eqref{eq: proofEst} converges to zero for $g \to \infty$, implying 
    \begin{equation*}
        \lim_{g \to \infty}F[\theta_g](\widehat{x}_{\theta_g}(t))
        =
        F_\infty(\bar{x}).
    \end{equation*}
    As by construction $\widehat{x}_{\theta_g}$ minimizes $F[\theta_g]$ we have
    \begin{equation*}
        F[\theta_g](\widehat{x}_{\theta_g})
        \leq 
        F[\theta_g](\widehat{x}_\infty(t)).
    \end{equation*}
    Applying again \Cref{lem: propEntrRisk} we pass to the limit on both sides and obtain
    \begin{equation*}
        F_\infty(\bar{x})
        \leq
        F_\infty(\widehat{x}_\infty(t)).
    \end{equation*}
    Since $\widehat{x}_\infty(t)$ is the unique minimizer of $F_\infty$, it follows $\bar{x} = \widehat{x}_\infty(t)$.
    We have now shown that any subsequence of $\widehat{x}_{\theta_l}(t)$ admits a subsequence $\widehat{x}_{\theta_g}(t)$ that converges to $\widehat{x}_\infty(t)$ and the second convergence claim is shown.

    It remains to show the continuity for $\theta \in (0,\infty)$.
    This follows from the fact that for each $t\in [0,T]$ this mapping is uniformly bounded with respect to $\theta$, see \Cref{lem: UnifBoundTheta}. This allows to extract converging subsequences for families $\theta\to \widehat{x}_\theta(t)$ with $\theta \to \bar\theta$  for some $\bar\theta \in (0, \infty)$. Subsequently one can pass to the limit in \eqref{eq: charEntrMin} to assert that this limit is necessarily $\widehat{x}_{\bar\theta}(t)$. 
\end{proof}
The arguments made in the proof further yield a representation of $\widehat{x}_\infty(t)$ with coefficients that are measurable in time. We obtain the following result.
\begin{corollary}\label[corollary]{cor: min_inf_essBd}
	The worst case minimizer $\widehat{x}_\infty$ is measurable  and essentially bounded, and hence $\widehat{x}_\infty \in \mathcal{L}_{0,T}^\infty$.
\end{corollary}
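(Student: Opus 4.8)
The plan is to bypass the characterization of \Cref{thm: char_inft}, which exposes no measurable dependence of the active set $S(t)$ or of the coefficients $\alpha(t)$ on $t$, and instead to exploit the pointwise convergence $\widehat{x}_\theta(t)\to\widehat{x}_\infty(t)$ established in \Cref{prop: pntwConv_in_theta} together with the time regularity of the entropic estimators from \Cref{thm: regularity_x_theta}. The one conceptual point is measurability; essential boundedness is then a routine consequence of the minimizing property.

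First I would fix any sequence $\theta_l\to\infty$. Since $y\in\mathcal{L}_{0,T}^2$ by the standing assumption, \Cref{thm: regularity_x_theta} applied with $p=1$ gives $\widehat{x}_{\theta_l}\in W^{1,1}(0,T;\mathbb{R}^n)$ for every $l$; in particular each $\widehat{x}_{\theta_l}$ is represented by a measurable function of $t$. By \Cref{prop: pntwConv_in_theta}, $\widehat{x}_{\theta_l}(t)\to\widehat{x}_\infty(t)$ for every $t\in[0,T]$, so $\widehat{x}_\infty$ is a pointwise limit of measurable functions and hence measurable.

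For essential boundedness I would argue directly from the fact that $\widehat{x}_\infty(t)$ minimizes $f[t]$. For each $t$ and each index $k$, using $r_k(t)\ge 0$ and the definition of $f[t]$,
\[
\|\widehat{x}_\infty(t)-\widehat{x}_k(t)\|_{P_k(t)}^2 \le \mathcal{V}_k(t,\widehat{x}_\infty(t)) \le f[t](\widehat{x}_\infty(t)) \le f[t](0) = \max_{j=1,\dots,N}\bigl(\|\widehat{x}_j(t)\|_{P_j(t)}^2 + r_j(t)\bigr).
\]
The right-hand side is continuous in $t$ on the compact interval $[0,T]$ — one uses $P_j\in C^\infty([0,T];\mathbb{R}^{n,n})$ from \eqref{eq: P_smooth}, the embedding $\widehat{x}_j\in\mathcal{H}_0^T\hookrightarrow C([0,T];\mathbb{R}^n)$, and continuity of $t\mapsto r_j(t)$ (its integrand lies in $L^1(0,T)$) — hence it is bounded by some $M_0<\infty$. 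Continuity and positive definiteness of $P_k$ on $[0,T]$ provide $\lambda>0$ with $P_k(t)\succeq\lambda I$ for all $t$, so $\|\widehat{x}_\infty(t)-\widehat{x}_k(t)\|\le\sqrt{M_0/\lambda}$ for all $t$; combined with $\sup_{[0,T]}\|\widehat{x}_k\|<\infty$ this yields $\sup_{t\in[0,T]}\|\widehat{x}_\infty(t)\|<\infty$. Together with the measurability established above this gives $\widehat{x}_\infty\in L^\infty(0,T;\mathbb{R}^n)=\mathcal{L}_{0,T}^\infty$.

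I do not expect a genuine obstacle: the only step that requires the non-obvious idea is the measurability, which the explicit representation fails to deliver and which is recovered by realizing $\widehat{x}_\infty$ as the pointwise $\theta\to\infty$ limit of the absolutely continuous (hence measurable) estimators $\widehat{x}_\theta$. As an alternative to the direct bound above, one could instead pass to the limit $\theta\to\infty$ in the uniform bound of \Cref{lem: UnifBoundTheta}, provided that bound is uniform in $t$ as well; the self-contained minimization argument avoids having to check this.
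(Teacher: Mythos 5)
Your proof is correct, but it takes a genuinely different route from the paper's. The paper proves both claims from the representation $\widehat{x}_\infty(t) = \bigl( \sum_{k=1}^N \beta_k(t) P_k(t) \bigr)^{-1} \sum_{k=1}^N \beta_k(t) P_k(t) \widehat{x}_k(t)$ obtained in the proof of \Cref{prop: pntwConv_in_theta}: measurability is inferred from measurability of the coefficients $\beta_k(t)$ (as pointwise limits of the $c_k[\theta_g,\widehat{x}_{\theta_g}(t)](t)$) together with continuity of $P_k$ and $\widehat{x}_k$, and boundedness from the uniform invertibility of $\sum_k \beta_k(t) P_k(t)$ supplied by \Cref{lem: InvOfPrecSumBD}. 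You instead get measurability directly by realizing $\widehat{x}_\infty$ as the everywhere-pointwise limit of the measurable functions $\widehat{x}_{\theta_l}$, and boundedness from the elementary variational comparison $\mathcal{V}_k(t,\widehat{x}_\infty(t)) \le f[t](\widehat{x}_\infty(t)) \le f[t](0)$ combined with the uniform lower eigenvalue bound on $P_k$ (\Cref{lem: estWeightNorm}). Your measurability argument is arguably cleaner: it uses only that the \emph{full} family $\widehat{x}_\theta(t)$ converges for each $t$, whereas the paper's $\beta_k(t)$ arise from a subsequence extracted separately for each fixed $t$, so their measurability in $t$ requires more care than the paper's one-line assertion suggests. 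What the paper's route buys in exchange is the explicit representation of $\widehat{x}_\infty$ with measurable convex weights, which it uses in the subsequent remark to connect with the coefficients $\alpha_k$ of \Cref{thm: char_inft}. Both approaches ultimately rest on \Cref{prop: pntwConv_in_theta}, so neither is more self-contained in that respect; your boundedness step, however, does not need \Cref{lem: InvOfPrecSumBD} at all.
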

\begin{proof}
	Above it was shown that it holds
	\begin{equation*}
		\widehat{x}_\infty(t) 
		=
		\left( \sum_{k=1}^N \beta_k(t) P_k(t) \right)^{-1}
		\sum_{k=1}^N \beta_k(t) P_k(t) \widehat{x}_k(t),
	\end{equation*}
	where $\beta_k(t)$ are the pointwise limits of $c_k[\theta_g,\widehat{x}_{\theta_g}(t)](t)$ as defined in \eqref{eq: constants_e}. Since the $c_k[\theta_g,\widehat{x}_{\theta,}(t)]$ are measurable, so are the $\beta_k(t)$. As discussed in the proof of \Cref{prop: pntwConv_in_theta} we have $0 \leq \beta_k(t) \leq 1$. Moreover $P_k$ and $\widehat{x}_k$ are continuous in $t \in [0,T]$. Consequently $\sum_{k=1}^N \beta_k(t) P_k(t) \widehat{x}_k(t)$ and $\sum_{k=1}^N \beta_k(t) P_k(t)$ are measurable. 
    According to \Cref{lem: InvOfPrecSumBD} $\sum_{k=1}^N \beta_k(t) P_k(t)$ is invertible with an inverse bounded uniformly in $t$. The continuity of $M \mapsto M^{-1}$ on positive definite matrices implies measurability of $\sum_{k=1}^N \beta_k(t) P_k(t)$. 
    These facts imply the measurability and uniform boundedness of $\hat x_\infty.$ 
\end{proof}
\begin{remark}
	Using the affine independence established in \Cref{thm: char_inft} one could show a relation between the here discussed representation via $\beta_k$ and the one via $\alpha_k$ as presented in \Cref{thm: char_inft}. Since this does not offer any further insight towards the properties of $\widehat{x}_\infty$, we omit the required technical arguments. 
\end{remark}
Due to the $\theta$-uniform bound on $\widehat{x}_\theta(t)$ the pointwise convergence implies convergence in appropriate function spaces.
\begin{proposition}
    Let $y \in \mathcal{L}_{0,T}^2$ and $p \in \mathbb{N}$ such that $1 \leq p <\infty$. It holds 
    \begin{equation*}
    \begin{alignedat}{2}
        \Vert \widehat{x}_\theta - \widehat{x}_0 \Vert_{W^{1,1}(0,T;\mathbb{R}^n)} &\to 0
        ~~~~~
        &&\text{for}~ \theta \to 0,\\
        \Vert \widehat{x}_\theta - \widehat{x}_\infty \Vert_{\mathcal{L}_{0,T}^p} &\to 0
        ~~~~~
        &&\text{for}~ \theta \to \infty.
    \end{alignedat}
    \end{equation*}
    %
\end{proposition}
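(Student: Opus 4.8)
The plan is to deduce both convergences from the pointwise statements in \Cref{prop: pntwConv_in_theta} together with the $\theta$-uniform bound on $\widehat{x}_\theta(t)$ from \Cref{lem: UnifBoundTheta} via the dominated convergence theorem; the case $\theta\to\infty$ is then immediate, whereas the case $\theta\to 0$ additionally requires $\mathcal{L}_{0,T}^1$-convergence of the derivatives, which I extract from the explicit formula \eqref{eq: min_entrR_TimeDer}. For $\theta\to\infty$: by \Cref{lem: UnifBoundTheta} and \Cref{cor: min_inf_essBd} there is a constant $C$ with $\Vert\widehat{x}_\theta(t)\Vert\le C$ and $\Vert\widehat{x}_\infty(t)\Vert\le C$ for a.e.\ $t\in[0,T]$ and all $\theta$; in particular $\widehat{x}_\theta,\widehat{x}_\infty\in\mathcal{L}_{0,T}^p$ for every finite $p$. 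Thus $\Vert\widehat{x}_\theta(t)-\widehat{x}_\infty(t)\Vert^p\le(2C)^p$, a constant lying in $\mathcal{L}_{0,T}^1$ since $T<\infty$, and \Cref{prop: pntwConv_in_theta} gives the pointwise convergence $\widehat{x}_\theta(t)\to\widehat{x}_\infty(t)$; dominated convergence yields $\Vert\widehat{x}_\theta-\widehat{x}_\infty\Vert_{\mathcal{L}_{0,T}^p}\to0$.

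For $\theta\to0$, the same domination argument with $p=1$ and $\widehat{x}_\theta(t)\to\widehat{x}_0(t)$ gives $\widehat{x}_\theta\to\widehat{x}_0$ in $\mathcal{L}_{0,T}^1$; since $\widehat{x}_\theta\in W^{1,1}(0,T;\mathbb{R}^n)$ by \Cref{thm: regularity_x_theta} (case $p=1$) and $\widehat{x}_0\in\mathcal{H}_0^T$ by \Cref{cor: reg_Min_expEn}, it remains to show $\dot{\widehat{x}}_\theta\to\dot{\widehat{x}}_0$ in $\mathcal{L}_{0,T}^1$. Fix $t$ with $y(t)$ defined and evaluate the objects $M,V$ of \eqref{eq: M_min_entrR_TimeDer}, \eqref{eq: V_min_entrR_TimeDer} at $\widehat{x}_\theta(t)$. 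Since $\mathcal{V}_k\ge0$, the $\theta$-uniform bound on $\Vert\widehat{x}_\theta(t)-\widehat{x}_k(t)\Vert_{P_k(t)}^2$ from \Cref{lem: estWeightNorm,lem: UnifBoundTheta} and the boundedness of $r_k$ on $[0,T]$ imply $e^{\theta\mathcal{V}_k(t,\widehat{x}_\theta(t))}\to1$ as $\theta\to0$, while the terms carrying an explicit factor $\theta$ vanish and $\widehat{x}_\theta(t)\to\widehat{x}_0(t)$; hence $M(t,\widehat{x}_\theta(t))\to\mathcal{P}(t)$ and $V(t,\widehat{x}_\theta(t))\to-\sum_k\bigl(P_k(t)\dot{\widehat{x}}_k(t)-\dot{P}_k(t)(\widehat{x}_0(t)-\widehat{x}_k(t))\bigr)$. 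Using $\dot{\mathcal{P}}=\sum_k\dot{P}_k$ one checks that $-\mathcal{P}(t)^{-1}$ applied to this limit coincides with the expression for $\dot{\widehat{x}}_0(t)$ in \Cref{cor: reg_Min_expEn}, so $\dot{\widehat{x}}_\theta(t)\to\dot{\widehat{x}}_0(t)$ for a.e.\ $t$.

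To upgrade this to convergence in $\mathcal{L}_{0,T}^1$ I construct a $\theta$-uniform majorant. Because $\mathcal{V}_k\ge0$ the rank-one correction in \eqref{eq: M_min_entrR_TimeDer} is positive semidefinite, so $M(t,\widehat{x}_\theta(t))\succeq\sum_ke^{\theta\mathcal{V}_k}P_k(t)\succeq\mathcal{P}(t)\succeq cI$ with $c>0$ independent of $t$ and $\theta$ (continuity and positive definiteness of the $P_k$ on the compact interval), whence $\Vert M(t,\widehat{x}_\theta(t))^{-1}\Vert\le c^{-1}$. For $\theta\le1$ the same ingredients bound every $e^{\theta\mathcal{V}_k(t,\widehat{x}_\theta(t))}$ by a constant uniformly in $t$. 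The remaining factors of $V$ are, up to bounded continuous coefficients, $\Vert P_k\dot{\widehat{x}}_k(t)\Vert$, $\lvert\langle\widehat{x}_\theta(t)-\widehat{x}_k(t),P_k(t)\dot{\widehat{x}}_k(t)\rangle\rvert$, $\Vert y(t)-C\widehat{x}_k(t)\Vert_{Q_k^{-1}}^2$ and bounded quantities; since $\dot{\widehat{x}}_k$ depends affinely on $y(t)$ and $\widehat{x}_k\in C([0,T];\mathbb{R}^n)$, each is pointwise bounded by $a+b\lvert y(t)\rvert+d\lvert y(t)\rvert^2$ with constants $a,b,d$, which lies in $\mathcal{L}_{0,T}^1$ as $y\in\mathcal{L}_{0,T}^2$. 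Collecting the estimates gives $g\in\mathcal{L}_{0,T}^1$ with $\Vert\dot{\widehat{x}}_\theta(t)\Vert\le g(t)$ for all $\theta\in(0,1]$ and a.e.\ $t$, and dominated convergence yields $\dot{\widehat{x}}_\theta\to\dot{\widehat{x}}_0$ in $\mathcal{L}_{0,T}^1$; combined with the convergence of $\widehat{x}_\theta$ this is the first assertion.

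The main obstacle is this last step, i.e.\ finding the $\theta$-uniform $\mathcal{L}_{0,T}^1$-majorant for $\dot{\widehat{x}}_\theta$ from \eqref{eq: min_entrR_TimeDer}: it rests on the $\theta$-uniform lower bound for $M$ (to control its inverse), the $\theta$-uniform upper bound for the exponential weights when $\theta$ is small, and the observation that the only non-bounded contributions enter through $y(t)$ and $\dot{\widehat{x}}_k(t)$, hence are integrable because $y\in\mathcal{L}_{0,T}^2$.
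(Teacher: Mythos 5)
Your proof is correct and follows essentially the same route as the paper: dominated convergence from the pointwise convergence of \Cref{prop: pntwConv_in_theta} and the $\theta$-uniform bound of \Cref{lem: UnifBoundTheta} for both limits, plus, for $\theta\to0$, pointwise identification of the limit of \eqref{eq: min_entrR_TimeDer} with the derivative formula of \Cref{cor: reg_Min_expEn} and a $\theta$-uniform integrable majorant obtained by bounding $M^{-1}$ via $M\succeq\sum_k P_k$ and observing that the only unbounded contribution to $V$ enters through $y(t)$ and is integrable since $y\in\mathcal{L}_{0,T}^2$. Your spelled-out majorant $a+b\lvert y(t)\rvert+d\lvert y(t)\rvert^2$ is a slightly more explicit version of the paper's argument, but the approach is the same.
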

\begin{proof}
    We first consider the limit for $\theta \to \infty$. The pointwise convergence established in \Cref{prop: pntwConv_in_theta} together with the $\theta$-uniform bound from \Cref{lem: UnifBoundTheta} allow for an application of the dominated convergence theorem yielding the asserted convergence in $\mathcal{L}_{0,T}^p$ for any $1\leq p<\infty$. 

    The exact same argument yields convergence of $\widehat{x}_\theta$ to $\widehat{x}_0$ in $\mathcal{L}_{0,T}^1$ for $\theta \to 0$. It remains to show $L^1$ convergence of the derivatives. According to \Cref{thm: regularity_x_theta} for all $\theta \in (0,\infty)$ the weak derivative of $\widehat{x}_\theta$ exists, lies in $\mathcal{L}_{0,T}^1$ and for almost all $t\in[0,T]$ is given by the formula \eqref{eq: min_entrR_TimeDer}. Passing to the limit $\theta \to 0$ in said formula and comparing the outcome with the formula given in \Cref{cor: reg_Min_expEn} we find that we have pointwise convergence almost everywhere, i.e., for almost all $t \in[0,T]$ it holds
    \begin{equation*}
        \dot{\widehat{x}}_\theta(t) 
        \to 
        \dot{\widehat{x}}_0(t)
        ~~~~~
        \text{for}~
        \theta \to 0.
    \end{equation*}
    It remains to establish an integrable upper bound that dominates $\dot{\widehat{x}}_\theta(t)$ almost everywhere. Denoting the uniform bound for $\widehat{x}_\theta(t)$ established in \Cref{lem: UnifBoundTheta} by $C = \hat{C} (\Vert x_0 \Vert^2 + \Vert y \Vert_{\mathcal{L}_{0,T}^2})^2$ and utilizing \eqref{eq: min_entrR_TimeDer} we find
    \begin{equation*}
        \Vert \dot{\widehat{x}}_\theta(t) \Vert
        \leq
        \Vert M(\widehat{x}_\theta(t),t)^{-1} \Vert_2
        \Vert V(\widehat{x}_\theta(t),t) \Vert,
    \end{equation*}
    with $M$ and $V$ as in \eqref{eq: M_min_entrR_TimeDer} and \eqref{eq: V_min_entrR_TimeDer}, respectively. We begin by estimating the first factor. In the following, for two symmetric matrices $M_1,M_2 \in \mathbb{R}^{n\times n}$ we write $M_1 \geq M_2$ whenever $M_1 - M_2$ is positive semi-definite.  From the definition of $M$ we deduce that
    \begin{equation*}
        M(\widehat{x}_\theta(t),t)
        \geq 
        \sum_{k=1}^N P_k(t)
        \geq P_1(t).
    \end{equation*}
    It follows 
    \begin{equation}
        \Vert M(\widehat{x}_\theta(t),t)^{-1} \Vert_2 
        \leq 
        \Vert P_1(t) \Vert_2
        \leq
        \max_{s \in [0,T]} \Vert P_1(s) \Vert_2 = c_1
    \end{equation}
    with $c_1$ independent of $t$ and $\theta$. Turning to  $V(\widehat{x}_\theta(t),t)$ the only critical term is 
      \begin{equation}\label{eq:aux1}
        \theta \sum_{k=1}^N e^{\Vert \widehat{x}_\theta(t) - \widehat{x}_k(t) \Vert_{P_k(t)}^2 + r_k(t) }
        \Vert y(t) - C \widehat{x}_k(t) \Vert^2 P_k(t) (\widehat{x}_\theta(t) - \widehat{x}_k(t)).
    \end{equation}
    The remaining ones are either continuous in $t \in [0,T]$ or they can be uniformly estimated using \Cref{lem: UnifBoundTheta}. Note  in particular that $\theta$ appearing as factor does not pose any issue, as we are considering the limit for $\theta \to 0$. Turning to \eqref{eq:aux1}
   we use the known bounds for $P_k$, $\widehat{x}_k$ and $\widehat{x}_\theta(t)$,  so that since $y\in \mathcal{L}^2_{0,T}$ this term can be bounded by an $L^1$ function. 

    We have now shown that $\dot{\widehat{x}}_\theta $ converges pointwise almost everywhere and is pointwise almost everywhere bounded by an integrable function. 
    Consequently the theorem of dominated convergence yields 
    \begin{equation*}
        \dot{\widehat{x}}_\theta \to \dot{\widehat{x}}_0
        ~~~~~
        \text{for}~
        \theta \to 0,
        ~~~~~~~~~~
        \text{in}~ \mathcal{L}_{0,T}^1.
    \end{equation*}
    Together with the $L^1$ convergence of $\widehat{x}_\theta$ to $\widehat{x}_0$ this yields the asserted convergence of $\widehat{x}_\theta$ in $W^{1,1}$.
\end{proof}
%

%
\section{Error analysis}
%
Throughout this section we denote the true, hidden parameter as $\bar{\sigma} \in \Sigma$. We establish a priori bounds for the weighted distance of each estimator to the Kalman filter $\widehat{x}_{\bar{\sigma}}$ associated with the true parameter characterized via
\begin{equation}\label{eq: KF_bar}
	\begin{aligned}
		\dot{\widehat{x}}_{\bar{\sigma}}(t) 
		&= A_{\bar{\sigma}} \widehat{x}_{\bar{\sigma}}(t)
		+ \Pi_{\bar{\sigma}}(t) C^\top Q_{\bar{\sigma}}^{-1} \left( y(t) - C \widehat{x}_{\bar{\sigma}}(t)\right)
        ~~~~~
		t \in (0,T) ,\\
		\widehat{x}_{\bar{\sigma}}(0) &= x_0,
	\end{aligned}
\end{equation}
and
\begin{equation}\label{eq: Ricc_bar}
	\begin{aligned}
		\dot{\Pi}_{\bar{\sigma}}(t) &= A_{\bar{\sigma}} \Pi_{\bar{\sigma}}(t) + \Pi_{\bar{\sigma}}(t) A_{\bar{\sigma}}^\top
		- \Pi_{\bar{\sigma}}(t) C^\top Q_{\bar{\sigma}}^{-1} C \Pi_{\bar{\sigma}}(t) 
		+ B R_{\bar{\sigma}} B^\top
		~~~
		~~~t \in (0,T),\\
		\Pi_{\bar{\sigma}}(0) &= \Gamma_{\bar{\sigma}}.
	\end{aligned}
\end{equation}
We begin by citing two results from an earlier work. The following auxiliary result is required for proving error bounds for the estimators. The proof is presented in \cite[Lem.~4.6]{KunSc25}.
\begin{lemma}\label[lemma]{lem: est_Kal}
    Let $\widehat{x}_k$, $\Pi_k$, $\widehat{x}_{\bar{\sigma}}$, and $\Pi_{\bar{\sigma}}$ be the solutions of \eqref{eq: KF_k}, \eqref{eq: Ricc_k}, \eqref{eq: KF_bar}, and \eqref{eq: Ricc_bar}, respectively. Then there exists a constant $c_k > 0$ independent of $t\in[0,T]$ such that
    \begin{equation*}
    \begin{aligned}
        \Vert \widehat{x}_k(t) - \widehat{x}_{\bar{\sigma}}(t) \Vert
        \leq
        c_k 
        \Vert \mathcal{S}_k - \mathcal{S}_{\bar{\sigma}} \Vert_1.
    \end{aligned}
    \end{equation*} 
\end{lemma}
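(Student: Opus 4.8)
The plan is to obtain the bound by two successive applications of Gr\"onwall's inequality: first controlling the difference of the error covariances $\Pi_k - \Pi_{\bar\sigma}$ (equivalently of the Kalman gains) in terms of $\Vert \mathcal{S}_k - \mathcal{S}_{\bar\sigma} \Vert_1$, and then feeding this into a stability estimate for the filter trajectories $\widehat{x}_k - \widehat{x}_{\bar\sigma}$. Two facts are used repeatedly. First, since $\Pi_k$ and $\Pi_{\bar\sigma}$ solve \eqref{eq: Ricc_k} and \eqref{eq: Ricc_bar} on all of $[0,T]$, they are continuous, hence bounded on the closed interval, and bounded below away from $0$ (they are symmetric positive definite). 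Second, $Q_k$ and $Q_{\bar\sigma}$ are fixed symmetric positive definite matrices, so $Q_k^{-1}$ and $Q_{\bar\sigma}^{-1}$ are bounded and, by the resolvent-type identity $Q_k^{-1} - Q_{\bar\sigma}^{-1} = Q_k^{-1}(Q_{\bar\sigma} - Q_k)Q_{\bar\sigma}^{-1}$, one has $\Vert Q_k^{-1} - Q_{\bar\sigma}^{-1}\Vert \le c\, \Vert Q_k - Q_{\bar\sigma}\Vert$ with $c$ depending only on $k$ and $\bar\sigma$. All constants below may depend on the fixed indices $k$, $\bar\sigma$ (and on $B$, $C$, $T$), as permitted by the statement.

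First I would set $E := \Pi_k - \Pi_{\bar\sigma}$ and subtract \eqref{eq: Ricc_k} from \eqref{eq: Ricc_bar}. Each difference of products is handled by adding and subtracting an intermediate term, e.g.\ $A_k \Pi_k - A_{\bar\sigma}\Pi_{\bar\sigma} = (A_k - A_{\bar\sigma})\Pi_k + A_{\bar\sigma} E$, and analogously for the quadratic term $\Pi C^\top Q^{-1} C \Pi$, where one also invokes the Lipschitz bound on $Q \mapsto Q^{-1}$ recorded above; the affine term contributes $B(R_k - R_{\bar\sigma})B^\top$. Using the uniform bounds on $\Pi_k$, $\Pi_{\bar\sigma}$, $Q_k^{-1}$, $Q_{\bar\sigma}^{-1}$ this yields a differential inequality
\[
\Vert \dot E(t) \Vert \le a_1 \Vert E(t) \Vert + a_2 \Vert \mathcal{S}_k - \mathcal{S}_{\bar\sigma} \Vert_1, \qquad \Vert E(0) \Vert = \Vert \Gamma_k - \Gamma_{\bar\sigma}\Vert \le \Vert \mathcal{S}_k - \mathcal{S}_{\bar\sigma} \Vert_1 ,
\]
and Gr\"onwall gives $\sup_{t \in [0,T]} \Vert \Pi_k(t) - \Pi_{\bar\sigma}(t)\Vert \le a_3 \Vert \mathcal{S}_k - \mathcal{S}_{\bar\sigma}\Vert_1$. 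Because the $\Pi$'s are bounded and bounded below on $[0,T]$, the same type of bound transfers to $P_k - P_{\bar\sigma}$ and, after another add-and-subtract step, to the Kalman gains: $\sup_{t}\Vert \Pi_k(t) C^\top Q_k^{-1} - \Pi_{\bar\sigma}(t) C^\top Q_{\bar\sigma}^{-1}\Vert \le a_4 \Vert \mathcal{S}_k - \mathcal{S}_{\bar\sigma}\Vert_1$.

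Next I would set $e := \widehat{x}_k - \widehat{x}_{\bar\sigma}$ and subtract \eqref{eq: KF_k} from \eqref{eq: KF_bar}. Writing $K_k := \Pi_k C^\top Q_k^{-1}$ and splitting every product as before gives
\[
\dot e(t) = \bigl(A_{\bar\sigma} - K_{\bar\sigma}(t) C\bigr) e(t) + (A_k - A_{\bar\sigma})\widehat{x}_k(t) + \bigl(K_k(t) - K_{\bar\sigma}(t)\bigr)\bigl(y(t) - C\widehat{x}_k(t)\bigr), \qquad e(0) = 0 .
\]
The matrix $A_{\bar\sigma} - K_{\bar\sigma}(t)C$ is continuous, hence bounded, on $[0,T]$. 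For the forcing term $h(t) := (A_k - A_{\bar\sigma})\widehat{x}_k(t) + (K_k(t) - K_{\bar\sigma}(t))(y(t) - C\widehat{x}_k(t))$ I would estimate $\int_0^t \Vert h(s)\Vert\,\mathrm{d}s$ directly: using $\Vert A_k - A_{\bar\sigma}\Vert \le \Vert \mathcal{S}_k - \mathcal{S}_{\bar\sigma}\Vert_1$, the gain estimate from the previous step, the fact that $\widehat{x}_k \in \mathcal{H}_0^T \hookrightarrow C([0,T];\mathbb{R}^n)$ is bounded, and the Cauchy--Schwarz bound $\Vert y - C\widehat{x}_k\Vert_{\mathcal{L}_{0,T}^1} \le \sqrt{T}\,\Vert y - C\widehat{x}_k\Vert_{\mathcal{L}_{0,T}^2} < \infty$ (recall $y \in \mathcal{L}_{0,T}^2$), one obtains $\int_0^t \Vert h(s)\Vert\,\mathrm{d}s \le a_5 \Vert \mathcal{S}_k - \mathcal{S}_{\bar\sigma}\Vert_1$ for all $t \in [0,T]$. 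Integrating the ODE for $e$ from $0$, taking norms, and applying the integral form of Gr\"onwall's inequality then yields $\Vert e(t)\Vert \le a_5 e^{a_6 T} \Vert \mathcal{S}_k - \mathcal{S}_{\bar\sigma}\Vert_1$, which is the assertion with $c_k := a_5 e^{a_6 T}$.

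The only genuinely delicate point is that the measurement $y$ lies merely in $\mathcal{L}_{0,T}^2$ and is not bounded, so the forcing $h$ is only $L^1$ (in fact $L^2$) in time rather than uniformly bounded; this is why one works with the integral form of Gr\"onwall's inequality and bounds $\int_0^t \Vert h\Vert$ rather than $\Vert h\Vert_\infty$. Everything else reduces to a routine bookkeeping of constants, all of which may legitimately depend on the fixed indices $k$ and $\bar\sigma$.
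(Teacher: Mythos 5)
The paper does not prove this lemma itself; it defers entirely to \cite[Lem.~4.6]{KunSc25}, so there is no in-paper argument to compare against. Your proposal is a correct and complete proof: the two-stage Gr\"onwall argument (first a perturbation bound for the Riccati solutions, hence for the gains $\Pi_k C^\top Q_k^{-1}$, then a stability estimate for the filter ODE with the gain difference entering as an $L^1$-in-time forcing) is exactly the standard route for such continuity-in-parameter estimates, and you correctly handle the one delicate point, namely that $y\in\mathcal{L}_{0,T}^2$ forces the integral form of Gr\"onwall rather than a pointwise bound on the forcing.
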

With this technical result at hand the following bound can be shown for the minimizer of the expected energy $\widehat{x}_0$ as defined in \Cref{subs: MinExp}, for a proof see \cite[Prop.~4.10]{KunSc25}.
\begin{proposition}\label[proposition]{prop: errBd_expectedEn}
	There exists a constant $c_0>0$ independent of $N$ such that for all $t \in [0,T]$ it holds
	\begin{equation*}
		\begin{aligned}
			\Vert \widehat{x}_0(t) - \widehat{x}_{\bar{\sigma}}(t) \Vert_{P_{\bar{\sigma}(t)}}
			\leq
			c_0~
			\mathbb{E} \left[
			\Vert \mathcal{S}_\sigma - \mathcal{S}_{\bar{\sigma}} \Vert_1
			\right].
		\end{aligned}
	\end{equation*}
\end{proposition}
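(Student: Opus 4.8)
The plan is to exploit the closed form of $\widehat{x}_0$ supplied by \Cref{prop: charEstMin}, namely $\widehat{x}_0(t) = \mathcal{P}^{-1}(t)\sum_{k=1}^N P_k(t)\,\widehat{x}_k(t)$ with $\mathcal{P}(t) = \sum_{k=1}^N P_k(t)$, combined with the pointwise bound of \Cref{lem: est_Kal}. Since $\mathcal{P}(t)\,\widehat{x}_{\bar\sigma}(t) = \sum_{k=1}^N P_k(t)\,\widehat{x}_{\bar\sigma}(t)$, the estimation error is a matrix-weighted average of the individual Kalman discrepancies,
\[
\widehat{x}_0(t) - \widehat{x}_{\bar\sigma}(t) = \mathcal{P}^{-1}(t)\sum_{k=1}^N P_k(t)\bigl(\widehat{x}_k(t) - \widehat{x}_{\bar\sigma}(t)\bigr),
\]
so that, measuring in the $P_{\bar\sigma}(t)$-weighted norm and using sub-multiplicativity of the spectral norm together with the triangle inequality,
\[
\Vert \widehat{x}_0(t) - \widehat{x}_{\bar\sigma}(t)\Vert_{P_{\bar\sigma}(t)} \le \Vert P_{\bar\sigma}(t)\Vert_2^{1/2}\,\Vert\mathcal{P}^{-1}(t)\Vert_2\sum_{k=1}^N \Vert P_k(t)\Vert_2\,\Vert\widehat{x}_k(t) - \widehat{x}_{\bar\sigma}(t)\Vert .
\]
\Cref{lem: est_Kal} then replaces each $\Vert\widehat{x}_k(t) - \widehat{x}_{\bar\sigma}(t)\Vert$ by $c_k\,\Vert\mathcal{S}_k - \mathcal{S}_{\bar\sigma}\Vert_1$.

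The decisive point is to control the matrix factors uniformly, and in a way that produces a factor $1/N$ compensating the $N$ summands. Standard Gronwall estimates for the Riccati equations \eqref{eq: Ricc_k} and their precision counterpart \eqref{eq: Prec_fixed} (written with the $k$-dependent data) show that on the finite interval $[0,T]$ there are constants $0 < \lambda_0 \le M < \infty$ — depending only on bounds for $A_k,B,C,R_k,Q_k^{-1},\Gamma_k,\Gamma_k^{-1}$ and on $T$, hence uniform in $k$ and, under the standing boundedness hypotheses on the parameter ensemble, independent of $N$ — such that $\lambda_0 I \preceq P_k(t) \preceq M I$ for all $k$ and all $t\in[0,T]$. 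In particular $\Vert P_k(t)\Vert_2\le M$ and $\Vert P_{\bar\sigma}(t)\Vert_2\le M$, while $\mathcal{P}(t) = \sum_{k=1}^N P_k(t) \succeq N\lambda_0 I$ gives $\Vert\mathcal{P}^{-1}(t)\Vert_2\le (N\lambda_0)^{-1}$. Inserting these bounds and setting $\bar c = \max_k c_k$ (again $N$-independent under the standing hypotheses) yields
\[
\Vert \widehat{x}_0(t) - \widehat{x}_{\bar\sigma}(t)\Vert_{P_{\bar\sigma}(t)} \le \frac{M^{3/2}\bar c}{N\lambda_0}\sum_{k=1}^N \Vert\mathcal{S}_k - \mathcal{S}_{\bar\sigma}\Vert_1 = \frac{M^{3/2}\bar c}{\lambda_0}\,\mathbb{E}\bigl[\Vert\mathcal{S}_\sigma - \mathcal{S}_{\bar\sigma}\Vert_1\bigr],
\]
the last equality because $P(\{\sigma_k\}) = 1/N$; taking $c_0 = M^{3/2}\bar c/\lambda_0$ completes the proof.

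The main obstacle is precisely this last cancellation: getting a constant independent of $N$ forces the linear-in-$N$ growth $\mathcal{P}(t)\succeq N\lambda_0 I$, which rests on a lower bound $P_k(t)\succeq\lambda_0 I$ — equivalently an upper bound on the Kalman covariances $\Pi_k(t)$ on $[0,T]$ — that is uniform over the entire uncertainty ensemble. This is where the standing assumptions (parameter values in a fixed bounded set, covariance data uniformly positive definite and bounded) are genuinely used; without such uniformity the constant would deteriorate as the ensemble is refined. By contrast, the slicker estimates available here — replacing $\mathcal{P}^{-1}$ by $\Pi_{\bar\sigma}$ via $\mathcal{P}(t)\succeq P_{\bar\sigma}(t)$, or the information-fusion inequality $\Vert\widehat{x}_0 - \widehat{x}_{\bar\sigma}\Vert_{P_{\bar\sigma}}^2 \le \sum_k\Vert\widehat{x}_k - \widehat{x}_{\bar\sigma}\Vert_{P_k}^2$ coming from the optimality of $\widehat{x}_0$ — do deliver an error bound, but only with an $N$-dependent constant, hence are insufficient for the stated claim.
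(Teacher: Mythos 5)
Your argument is correct: the decomposition $\widehat{x}_0-\widehat{x}_{\bar\sigma}=\mathcal{P}^{-1}\sum_k P_k(\widehat{x}_k-\widehat{x}_{\bar\sigma})$, the bound $\Vert\mathcal{P}^{-1}(t)\Vert_2\le (N\lambda_{\min})^{-1}$ cancelling the $N$ summands, and the appeal to \Cref{lem: est_Kal} are exactly the ingredients the paper uses for the analogous bounds in \Cref{prop: errEst_Inf} (via \Cref{lem: estWeightNorm} and \Cref{lem: InvOfPrecSumBD}); the paper itself only cites \cite[Prop.~4.10]{KunSc25} for this particular statement rather than reproducing the proof. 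Your explicit remark that the uniform spectral bounds on $P_k$ (and the constants $c_k$) must be $N$-independent is a fair caveat that the paper leaves implicit.
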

We proceed to show similar bounds for the remaining two estimators.
\begin{proposition}\label[proposition]{prop: errEst_Inf}
    There exists a constant $c_\infty>0$ independent of $N$ such that for all $t \in [0,T]$ and $\theta \in (0,\infty)$ it holds
    \begin{equation*}
    \begin{aligned}
        \Vert \widehat{x}_\infty(t) - \widehat{x}_{\bar{\sigma}}(t) \Vert_{P_{\bar{\sigma}(t)}}
        &\leq
        c_\infty~
        \max_{j \in \{ 1,\dots,N \}}
        \Vert \mathcal{S}_{j} - \mathcal{S}_{\bar{\sigma}} \Vert_1,\\
        \Vert \widehat{x}_\theta(t) - \widehat{x}_{\bar{\sigma}}(t) \Vert_{P_{\bar{\sigma}(t)}}
        &\leq
        c_\infty~
        \max_{j \in \{ 1,\dots,N \}}
        \Vert \mathcal{S}_{j} - \mathcal{S}_{\bar{\sigma}} \Vert_1.
    \end{aligned}
    \end{equation*}
\end{proposition}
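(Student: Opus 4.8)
The plan is to derive both bounds simultaneously from the representations of $\widehat{x}_\infty(t)$ and $\widehat{x}_\theta(t)$ as convex combinations (weighted by precision matrices) of the individual Kalman filters $\widehat{x}_k(t)$, and then exploit \Cref{lem: est_Kal}. Concretely, from \Cref{thm: char_inft}$(iii)$ we have for the worst-case estimator
\begin{equation*}
    \widehat{x}_\infty(t)
    = \Bigl( \sum_{k\in S(t)} \alpha_k(t) P_k(t) \Bigr)^{-1} \sum_{k \in S(t)} \alpha_k(t) P_k(t) \widehat{x}_k(t),
\end{equation*}
with $\alpha_k(t) \in [0,1]$ summing to one, and from \eqref{eq: charMinEntrR2} we have the analogous representation for $\widehat{x}_\theta(t)$ with coefficients $c_k[\theta,\widehat{x}_\theta(t)](t) \in [0,1]$ summing to one. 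In both cases, writing $w_k(t)$ for the respective convex weights and $\Lambda(t) = \sum_k w_k(t) P_k(t)$, we get the identity
\begin{equation*}
    \widehat{x}_\bullet(t) - \widehat{x}_{\bar{\sigma}}(t)
    = \Lambda(t)^{-1} \sum_k w_k(t) P_k(t) \bigl( \widehat{x}_k(t) - \widehat{x}_{\bar{\sigma}}(t) \bigr),
\end{equation*}
using $\sum_k w_k(t) P_k(t) \widehat{x}_{\bar{\sigma}}(t) = \Lambda(t) \widehat{x}_{\bar{\sigma}}(t)$.

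The key step is to estimate this in the $P_{\bar{\sigma}}(t)$-weighted norm uniformly in $t$ and in $\theta$. I would bound
\begin{equation*}
    \Vert \widehat{x}_\bullet(t) - \widehat{x}_{\bar{\sigma}}(t) \Vert_{P_{\bar{\sigma}}(t)}
    \leq \Vert P_{\bar{\sigma}}(t)^{1/2} \Lambda(t)^{-1} \Vert_2 \, \Bigl\Vert \sum_k w_k(t) P_k(t) \bigl( \widehat{x}_k(t) - \widehat{x}_{\bar{\sigma}}(t) \bigr) \Bigr\Vert,
\end{equation*}
and then, using $w_k(t) \in [0,1]$, dominate the sum by $\sum_k \Vert P_k(t) \Vert_2 \, \Vert \widehat{x}_k(t) - \widehat{x}_{\bar{\sigma}}(t) \Vert$, invoke \Cref{lem: est_Kal} to replace each $\Vert \widehat{x}_k(t) - \widehat{x}_{\bar{\sigma}}(t) \Vert$ by $c_k \Vert \mathcal{S}_k - \mathcal{S}_{\bar{\sigma}} \Vert_1$, and bound each $\Vert P_k(t) \Vert_2$ by its maximum over $[0,T]$ (finite by \eqref{eq: P_smooth}). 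This yields an estimate of the form $\text{const} \cdot \sum_k \Vert \mathcal{S}_k - \mathcal{S}_{\bar{\sigma}} \Vert_1 \leq \text{const} \cdot N \cdot \max_j \Vert \mathcal{S}_j - \mathcal{S}_{\bar{\sigma}} \Vert_1$; to get the claimed $N$-independent constant one should instead keep the convex-combination structure and note $\sum_k w_k(t) \Vert P_k(t) \Vert_2 \Vert \widehat{x}_k(t) - \widehat{x}_{\bar{\sigma}}(t) \Vert \leq \max_k \Vert P_k(t) \Vert_2 \max_j c_j \Vert \mathcal{S}_j - \mathcal{S}_{\bar{\sigma}} \Vert_1$, since the weights sum to one.

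The main obstacle is obtaining a bound on $\Vert P_{\bar{\sigma}}(t)^{1/2} \Lambda(t)^{-1} \Vert_2$ that is uniform in $t$ and, crucially, in $\theta$ (and independent of which subset $S(t)$ is active). Here I would use that $\Lambda(t) = \sum_k w_k(t) P_k(t) \geq \lambda_{\min}\bigl( \sum_k w_k(t) P_k(t) \bigr) I$; since each $P_k(t)$ is positive definite and continuous on the compact interval $[0,T]$, there is a uniform lower bound $P_k(t) \geq \delta I$ with $\delta > 0$, and because the weights are nonnegative and sum to one, $\Lambda(t) \geq \delta I$, giving $\Vert \Lambda(t)^{-1} \Vert_2 \leq \delta^{-1}$ uniformly. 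This is essentially the content of (and can be cited from) \Cref{lem: InvOfPrecSumBD} referenced earlier. Combining this with $\max_{s\in[0,T]} \Vert P_{\bar{\sigma}}(s) \Vert_2 < \infty$ produces the uniform constant $c_\infty$, and since the argument uses only $w_k \in [0,1]$ and $\sum_k w_k = 1$ — properties shared by both the $\alpha_k(t)$ and the $c_k[\theta,\cdot](t)$ — the same $c_\infty$ works for both estimators and for all $\theta \in (0,\infty)$, completing the proof.
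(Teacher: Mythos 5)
Your proposal follows essentially the same route as the paper: both proofs subtract $\widehat{x}_{\bar{\sigma}}(t)$ inside the convex-combination representations of \Cref{thm: char_inft} and \eqref{eq: charMinEntrR2}, bound the inverse of the weighted sum of precision matrices via \Cref{lem: InvOfPrecSumBD}, control the weighted norm via the uniform eigenvalue bounds of \Cref{lem: estWeightNorm}, and invoke \Cref{lem: est_Kal} together with $\sum_k w_k = 1$ to obtain the $N$-independent constant. Your self-correction away from the naive $\sum_k \Vert P_k(t)\Vert_2$ bound (which would introduce a factor of $N$) is exactly the point the paper's argument also relies on, so the proposal is correct and matches the paper's proof.
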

\begin{proof}
    We begin by showing the first estimate.
    The proof is based on the representation of $\widehat{x}_\infty(t)$ given in \Cref{thm: char_inft}. Let $\alpha_k(t)$ and $S(t)$ be as described there. For convenience we fix $t$ and denote $\alpha_k = \alpha_k(t)$ and $S = S(t)$. 
    With \Cref{lem: estWeightNorm}, \Cref{lem: est_Kal}, and \Cref{lem: InvOfPrecSumBD} we obtain
    \begin{equation*}
    \begin{aligned}
        \Vert \widehat{x}_\infty(t) - \widehat{x}_{\bar{\sigma}}(t) \Vert_{P_{\bar{\sigma}}(t)}
        &=
        \left\Vert 
        ( \sum_{k \in S} \alpha_k P_k(t) )^{-1}
        \sum_{k \in S} \alpha_k P_k(t) (\widehat{x}_k(t) - \widehat{x}_{\bar{\sigma}}(t))
        \right\Vert_{P_{\bar{\sigma}}(t)}\\
        &\leq
        \lambda_\mathrm{max}^{\tfrac{3}{2}}
        \lambda_\mathrm{min}^{-1} 
        \sum_{k \in S} \alpha_k
        \Vert \widehat{x}_k(t) - \widehat{x}_{\bar{\sigma}}(t) \Vert
        \leq
        \lambda_\mathrm{max}^{\tfrac{3}{2}}
        \lambda_\mathrm{min}^{-1} 
        \sum_{k \in S} \alpha_k
        c_k \Vert \mathcal{S}_k - \mathcal{S}_{\bar{\sigma}} \Vert_1.
    \end{aligned}
    \end{equation*}
    The assertion follows by setting $c_\infty = \lambda_\mathrm{max}^{\tfrac{3}{2}}
        \lambda_\mathrm{min}^{-1}  \max_{k \in S} c_k$ and estimating $\Vert \mathcal{S}_k - \mathcal{S}_{\bar{\sigma}} \Vert_1$ from above by the maximum over $k \in S$.

    Using the characterization of $\widehat{x}_\theta$ given in \eqref{eq: charMinEntrR2} the proof for the second estimate can be carried out analogously.
\end{proof}
We conclude the section by presenting another error bound for the minimizer of the entropic risk $\widehat{x}_\theta$ as defined in \Cref{def:minEntrR}. 
\begin{proposition}
    Let $c_\infty$ be the constant from \Cref{prop: errEst_Inf}. For all $\theta \in (0,\infty)$ and $t \in [0,T]$ it holds
    \begin{equation*}
        \Vert \widehat{x}_\theta(t) - \widehat{x}_{\bar{\sigma}}(t) \Vert_{P_{\bar{\sigma}(t)}}
        \leq 
        c_\infty \,
        e^{\theta J(t,\theta) } \,
        \mathbb{E} \left[
			\Vert \mathcal{S}_\sigma - \mathcal{S}_{\bar{\sigma}} \Vert_1
			\right],
    \end{equation*}
    where $J(t,\theta) = \max_{k,j \in \{ 1,\dots,N \}} 
    \left[ 
    \mathcal{V}_k(t,\widehat{x}_\theta(t)) - \mathcal{V}_j(t,\widehat{x}_\theta(t)) 
    \right]$. 
\end{proposition}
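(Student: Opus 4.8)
The plan is to mimic the argument for \Cref{prop: errEst_Inf}, replacing the representation from \Cref{thm: char_inft} by the characterization \eqref{eq: charMinEntrR2} of $\widehat{x}_\theta(t)$, and then to control the softmax weights $c_k[\theta,\widehat{x}_\theta(t)](t)$ quantitatively in $\theta$.

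Fix $t \in [0,T]$ and write $c_k = c_k[\theta,\widehat{x}_\theta(t)](t)$, so that by \eqref{eq: constants_e} we have $c_k \in [0,1]$ and $\sum_{k=1}^N c_k = 1$. Using $\sum_k c_k = 1$, formula \eqref{eq: charMinEntrR2} gives
\[
    \widehat{x}_\theta(t) - \widehat{x}_{\bar{\sigma}}(t)
    =
    \Bigl( \sum_{k=1}^N c_k P_k(t) \Bigr)^{-1}
    \sum_{k=1}^N c_k P_k(t) \bigl( \widehat{x}_k(t) - \widehat{x}_{\bar{\sigma}}(t) \bigr),
\]
which is exactly the identity used in \Cref{prop: errEst_Inf} with $\alpha_k$ replaced by $c_k$ and the summation extended to all indices. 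Applying \Cref{lem: estWeightNorm}, \Cref{lem: InvOfPrecSumBD} and \Cref{lem: est_Kal} verbatim then yields
\[
    \Vert \widehat{x}_\theta(t) - \widehat{x}_{\bar{\sigma}}(t) \Vert_{P_{\bar{\sigma}}(t)}
    \leq
    \lambda_{\mathrm{max}}^{3/2}\,\lambda_{\mathrm{min}}^{-1}
    \sum_{k=1}^N c_k \, \tilde{c}_k \, \Vert \mathcal{S}_k - \mathcal{S}_{\bar{\sigma}} \Vert_1,
\]
where $\tilde{c}_k>0$ denote the $t$-independent constants from \Cref{lem: est_Kal}.

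The new ingredient is a bound on $c_k$ carrying the $\theta$-dependence. Bounding each summand in the denominator of \eqref{eq: constants_e} below by $e^{\theta \min_j \mathcal{V}_j(t,\widehat{x}_\theta(t))}$ gives
\[
    c_k
    =
    \frac{e^{\theta \mathcal{V}_k(t,\widehat{x}_\theta(t))}}{\sum_{j=1}^N e^{\theta \mathcal{V}_j(t,\widehat{x}_\theta(t))}}
    \leq
    \frac{1}{N}\, e^{\theta\,(\mathcal{V}_k(t,\widehat{x}_\theta(t)) - \min_j \mathcal{V}_j(t,\widehat{x}_\theta(t)))}
    \leq
    \frac{1}{N}\, e^{\theta J(t,\theta)} .
\]
Inserting this into the previous display, extracting $\max_{k} \tilde{c}_k$ from the sum, and using $\tfrac{1}{N}\sum_{k=1}^N \Vert \mathcal{S}_k - \mathcal{S}_{\bar{\sigma}} \Vert_1 = \mathbb{E}[\Vert \mathcal{S}_\sigma - \mathcal{S}_{\bar{\sigma}} \Vert_1]$ delivers the claim with $c_\infty = \lambda_{\mathrm{max}}^{3/2}\lambda_{\mathrm{min}}^{-1}\max_{k=1,\dots,N}\tilde{c}_k$.

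No step poses a genuine difficulty. The only points needing attention are the notational collision between the softmax weights of \eqref{eq: constants_e} and the constants of \Cref{lem: est_Kal} (resolved above by the tilde), and ensuring that the $c_\infty$ produced here coincides with the one of \Cref{prop: errEst_Inf}; this is achieved by observing that in the latter proof the maximum over the active set $S(t)$ may be replaced, without affecting the validity of the bound, by the maximum over all $N$ indices, which is the form used here.
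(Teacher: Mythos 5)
Your proof is correct and takes essentially the same route as the paper's: the paper works with the unnormalized characterization \eqref{eq: charEntrMin}, bounding the inverse via $\sum_k e^{\theta \mathcal{V}_k}P_k(t) \geq e^{\theta \min_j \mathcal{V}_j}\sum_k P_k(t)$ and each numerator term by $e^{\theta \max_k \mathcal{V}_k}$, which is exactly the same computation as your softmax bound $c_k \leq \tfrac{1}{N}e^{\theta J(t,\theta)}$ applied to \eqref{eq: charMinEntrR2}. Your closing observation that the maximum over $S(t)$ in \Cref{prop: errEst_Inf} should be replaced by the maximum over all $N$ indices to obtain a genuinely $t$-independent $c_\infty$ is a valid and worthwhile point of care.
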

\begin{proof}
    Note first that with \Cref{lem: InvOfPrecSumBD} we obtain
    \begin{equation*}
        \Vert (\sum_{k=1}^N P_k(t) )^{-1} \Vert_2
        = \frac{1}{N}  \Vert (\sum_{k=1}^N \frac{1}{N} P_k(t) )^{-1} \Vert_2
        \leq (N \lambda_\mathrm{min})^{-1}.
    \end{equation*}
    Now utilizing \eqref{eq: charEntrMin} and \Cref{lem: estWeightNorm} we find
    \begin{equation*}
    \begin{aligned}
        \Vert \widehat{x}_\theta(t) - \widehat{x}_{\bar{\sigma}}(t) \Vert_{P_{\bar{\sigma}(t)}}
        &\leq 
        \lambda_\mathrm{max}^{\tfrac{3}{2}}
        \Vert ( \sum_{k=1}^N e^{\theta \mathcal{V}_k(t,\widehat{x}_\theta(t))} P_k(t) )^{-1} \Vert_2
        \sum_{k=1}^N e^{\theta\mathcal{V}_k(t,\widehat{x}_\theta(t))}
        \Vert \widehat{x}_k(t) - \widehat{x}_{\bar{\sigma}}(t) \Vert\\
        &\leq
        \lambda_\mathrm{min}^{-1} \lambda_\mathrm{max}^{\tfrac{3}{2}} \,
        e^{ \theta (\mathcal{V}_{k_\mathrm{max}}(t,\widehat{x}_\theta(t))
        -\mathcal{V}_{k_\mathrm{min}}(t,\widehat{x}_\theta(t)))}
        \frac{1}{N} \sum_{k=1}^N \Vert \widehat{x}_k(t) - \widehat{x}_{\bar{\sigma}}(t) \Vert,
    \end{aligned}
    \end{equation*}
    where $k_\mathrm{min}$ is chosen such that $\min_k \mathcal{V}_k(t,\widehat{x}_\theta(t)) = \mathcal{V}_{k_\mathrm{min}}(t,\widehat{x}_\theta(t)) $ and $k_\mathrm{max}$ is chosen analogously. Applying \Cref{lem: est_Kal} yields the assertion.
\end{proof}
%
\section{Numerical experiments}
%

This section presents numerical results illustrating and comparing the proposed estimators using two examples. 

%
\subsection{General setup}\label[subsection]{subs: genSetup}
%
We consider disturbed uncertain linear systems of the form 
\begin{equation}\label{eq: numSys}
\begin{alignedat}{2}
    \dot{x}(t) &= A_\sigma x(t) + B v(t) 
    ~~~~~&& t \in (0,T),
    ~~~~~~~~x(0) = x_0 + \eta,\\
    y(t) &= C x(t) + \mu(t)
    ~~~~~&& t \in (0,T),
\end{alignedat}
\end{equation}
where $\sigma \in \Sigma_A \subset \mathbb{R}^{s_A}$ with $ \vert \Sigma_A \vert = N_A$, as in \Cref{subsec: ModUnc}. The errors are associated with positive definite covariance matrices $\Gamma$, $R$, and $Q$ independent of $\sigma$. 

All ODEs are solved using the \matlab\, solver \texttt{ode15s} with a relative tolerance of $10^{-8}$ and an equidistant time grid $0 = t_0 < t_1 < \dots < t_{1000} = T$.
Kalman filter equations such as \eqref{eq: KF_fixed}-\eqref{eq: Ricc_fixed} are transformed into $n+n^2$ dimensional ODEs. 

To generate outputs $y \in \mathcal{L}_{0,T}^2$ we utilize the \matlab\, function \texttt{normrnd} to construct realizations of $\eta \sim \mathcal{N}(0,\Gamma)$, $v(t_k) \sim \mathcal{N}(0,R)$, and $\mu(t_k) \sim \mathcal{N}(0,Q)$ for $k = 0,\dots,1000$. The time continuous errors $v \in \mathcal{L}_{0,T}^2$ and $\mu \in \mathcal{L}_{0,T}^2$ are obtained via linear interpolation. After choosing a $\bar{\sigma} \in \Sigma_A$ to be the true, hidden parameter $y$ is obtained by solving \eqref{eq: numSys} with the constructed errors and $\bar{\sigma}$.
Note that this construction of $v$ and $\mu$ only works in discrete time and the extension via interpolation is of heuristic nature and does not converge to any function for $\Delta t \to 0$. In spirit this limit leads to the stochastic formulation via random processes and SDEs. 

To construct the estimators one needs to solve for the individual Kalman filters $ \widehat{x}_k $ and $P_k$, $k=1,\dots,N_A$. As they are independent of each other this is done in parallel using \texttt{parfor} from the \matlab\, Parallel Computing Toolbox.

With the $\widehat{x}_k$ and $P_k$ at hand, the risk neutral estimator $\widehat{x}_0$ is realized directly via \eqref{eq: en_min}. The realization of the risk averse estimators $\widehat{x}_\theta$, $\theta \in (0,\infty)$, however, requires a more elaborate approach. We opt for a gradient descent scheme to approximate $\widehat{x}_\theta(t_k)$, $k=0,\dots,1000$ as the minimizer of $\rho_\theta(\mathcal{V}_\sigma(t_k,x))$, cf.~\eqref{eq: minEntrRisk}. For each iteration the stepsize is constructed using the Barzilai-Borwein method \cite{AzmK20,BarB88} combined with Armijo backtracking. Denoting the risk aversion parameters of interest by $\theta_1,\dots,\theta_l$, sorted increasingly, we initialize the gradient descent for $\theta_1$ with $\widehat{x}_0$ and in an iterative manner the construction of $\widehat{x}_{\theta_g}$ is initialized by 
$\widehat{x}_{\theta_{g-1}}$.
For the evaluation of $\rho_\theta(\mathcal{V}_\sigma)$ and its gradient we employ the well known log-sum-exp trick to avoid overflow. 
The realizations were implemented in \matlab\, R2024b and the code is available in \cite{Sch25}.

%
\subsection{Harmonic oscillator with uncertain damping}
%
As a first example we consider a harmonic oscillator which in first order form is modeled as 
\begin{equation*}
\begin{alignedat}{2}
    \dot{x}(t) &= 
    \begin{bmatrix}
        0 & 1 \\
        - \frac{k}{m} & - \frac{c}{m}
    \end{bmatrix}
    x(t)
    +
    \begin{bmatrix} 0 \\ 1 \end{bmatrix} v(t)
    ~~~ &&t \in (0,T),\\
    x(0) &= x_0 + \eta,&&\\ 
    y(t) & = \begin{bmatrix} 1 & 0 \end{bmatrix} x(t)
    + \mu(t)
    ~~~ &&t \in (0,T).
\end{alignedat}
\end{equation*}
The two components of the state vector $x = \begin{bmatrix} x_1 & x_2 \end{bmatrix}^\top$ represent the position and velocity of the system.
Further $m>0$, $c \geq 0$, and $k \geq 0$ represent the mass, damping coefficient, and spring constant, respectively. The function $v \in \mathcal{L}_{0,T}^2$ represents the unknown disturbance in the dynamics. The modeled initial position and velocity are given by $x_{0,1}$ and $x_{0,2}$ and are subject to the initial errors $\eta_1$ and $\eta_2$. Finally, $y$ denotes the measurement of the position 
with a deterministic but unknown output error $\mu \in \mathcal{L}_{0,T}^2$.

For our experiments we fix the mass and the spring constant to one, i.e., $m=k=1$, the time horizon is set to $T=5$. As an undisturbed initial state we use 
$x_0 = \begin{bmatrix} 1 & 0 \end{bmatrix}^\top$ and for the covariances we set $\Gamma = 0.1 ~\mathrm{Id} $, $R = 0.05$, and $Q = 0.05$. In the following we present results for two sets of possible damping parameters each containing $N_A = 100$ elements. 

The first parameter set is given via uniform samples from the interval $[0.1,3]$, cf.~\Cref{subf: parameters_1}, the second parameter set is sampled via a log-normal distribution with mean $-0.25$ and variance $0.5$, both in the logarithmic scale, illustrated in \Cref{subf: parameters_2}. For both sets the parameter of maximal value is designated as the true underlying parameter $\bar{\sigma}$. 
The resulting disturbed state $x$, and associated state estimators are presented in \Cref{subf: phase_1} and \Cref{subf: phase_2}, respectively. 
We observe that the Kalman filter $\widehat{x}_{\bar{\sigma}}$ associated with the hidden parameter yields a satisfying reconstruction of the hidden state $x$ validating the Kalman based approach. However, since $\bar{\sigma}$ is not known, $\widehat{x}_{\bar{\sigma}}$ can not be constructed and we turn to the uncertainty based estimators. 

Comparing the risk neutral estimator $\widehat{x}_0$ with the risk averse estimators $\widehat{x}_\theta$, for $\theta = 0.1, 0.5, 1, 20, 750, 1000$ we find that
if the underlying true parameter is an outlier 
then the risk averse approach yields noticeably more accurate reconstructions than the risk neutral one, cf.~\Cref{subf: phase_2}. In the case of uniformly distributed parameters we observe
relatively large similarities between the risk neutral and risk averse approaches.
In both phase plots, see \Cref{subf: phase_1} and \Cref{subf: phase_2}, it appears that convergence for $\theta \to \infty$ has almost been reached. Indeed, the difference of the trajectories $\widehat{x}_{750}$ and $\widehat{x}_{1000}$ is close to zero.

A deeper analysis of the numerical results is offered in \Cref{tab: osc1} where we present the time integral over the entropic risk evaluated along a selection of state estimates. More specifically for $\theta = 0,0.5,20,1000$ and $\tau = 0,0.5,20,1000,\infty$ we present the values
\begin{equation*}
    \int_0^T \rho_\tau (\mathcal{V}_\sigma (t,\widehat{x}_\theta(t))) \, \mathrm{d}t,
\end{equation*}
where for $\tau \in (0,\infty)$ we denote by $\rho_\tau$ the entropic risk as defined in \Cref{def: entrRisk}, and $\rho_0$ and $\rho_\infty$ denote the expectation and essential supremum, respectively. By construction it holds that $\widehat{x}_\theta(t)$ minimizes $\rho_\theta(\mathcal{V}_\sigma(t,\cdot))$. Consequently, for both parameter sets the minima in rows one to four (printed in bold face) lie on the diagonal. Additionally, the largest risk aversion parameter $\theta = 1000$ leads to the smallest value of the essential supremum, reiterating the proximity of $\widehat{x}_{1000}$ to the limit $\widehat{x}_\infty$. 
The table further illustrates the interplay between risk aversion and the underlying parameter set. For the case of uniformly distributed parameters the level of risk aversion is of limited influence. In row 1 we find that the risk neutral estimator ($\theta = 0$) and the strongly risk averse estimator ($\theta=1000$) perform similarly well with respect to the expected value. In fact, the minimizer $\widehat{x}_0$ performs only about 
\begin{equation*}
    \frac{6.3765 - 6.0716}{6.3765} \approx 4.8 \%
\end{equation*}
better than $\widehat{x}_{1000}$. Similarly, row 5 shows that the performance of $\widehat{x}_0$ with respect to the essential supremum is almost as good as the one of the risk averse estimator $\widehat{x}_{1000}$. Specifically, the latter offers an improvement over the former of about
\begin{equation*}
    \frac{10.589 - 9.4986}{10.589} \approx 10.3 \%.
\end{equation*}
The results are drastically different for the second parameter set. In row 6 we observe that $\widehat{x}_0$ performs about 
\begin{equation*}
    \frac{9.2433 - 6.7095}{9.2433} \approx 27.4 \%
\end{equation*}
better than $\widehat{x}_{1000}$ when considering the expected value. On the other hand, in terms of the essential supremum the risk averse estimator offers an improvement of about 
\begin{equation*}
    \frac{38.15-11.945}{38.15} \approx 68.7 \%.
\end{equation*}
Summarizing the results we observe that the parameter set may play a crucial role towards the effect of the risk averse estimators. In our study we find that the relative differences for the two parameter sets differ by a factor of $6$. Further note that, by construction moving from risk neutral to strongly risk averse estimation improves the performance with respect to the essential supremum (see rows five and ten) while the performance with respect to the expectation might suffer, as shown in rows one and six. For this example, however, the improvement with respect to $\esssup$ is noticeably larger than the decrease in terms of $\mathbb{E}$.  

\begin{figure}
	\begin{subfigure}{0.49\textwidth}
        \includegraphics[scale = 1]{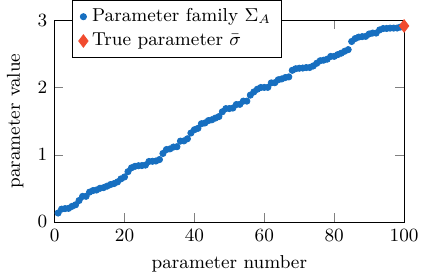}
		\caption{Uniform parameters}
		\label{subf: parameters_1}
	\end{subfigure}
	\begin{subfigure}{0.49\textwidth}
        \includegraphics[scale = 1]{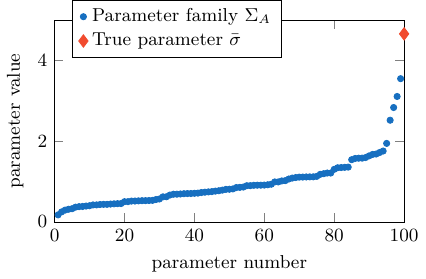}
		\caption{Log-normal parameters}
		\label{subf: parameters_2}
	\end{subfigure}
	\begin{subfigure}{0.49\textwidth}
        \includegraphics[scale = 1]{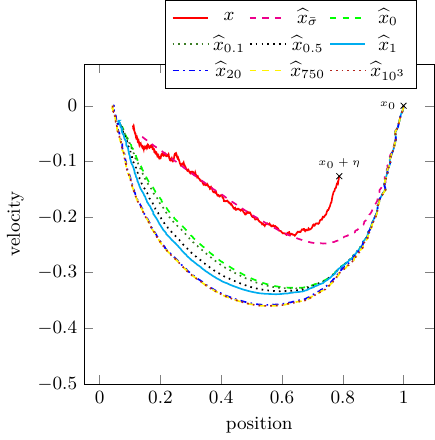}
        \caption{Trajectories for uniform parameters}
		\label{subf: phase_1}
	\end{subfigure}
	\begin{subfigure}{0.49\textwidth}
        \includegraphics[scale = 1]{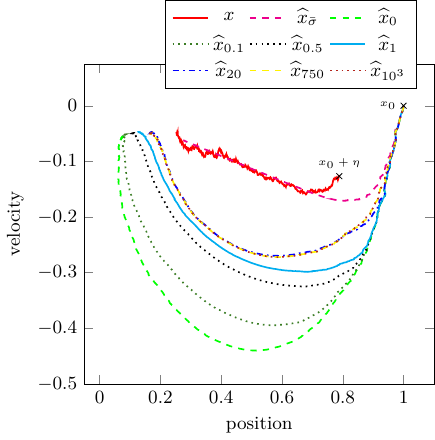}
		\caption{Trajectories for log-normal parameters}
		\label{subf: phase_2}
	\end{subfigure}
    \caption{Harmonic oscillator with uncertain damping parameter}
\end{figure}
\begin{table}
	\begin{center}
		\begin{NiceTabular}{l || l | | c c c c}
			\hline
			  & & $\theta = 0$ & $\theta = 0.5$ & $\theta = 20$ & $\theta = 1000$  \\
			\hline \hline 
            & Uniform parameters &&&& \\
            \hline \hline
			1 &$\int \mathbb{E}[\mathcal{V}_\sigma(t,\widehat{x}_\theta(t))] \, \mathrm{d}t$
            & $\textbf{6.0716}$ & $6.1175$ & $6.3561$ & $6.3765$   \\
            2 &$\int \rho_{0.5}(\mathcal{V}_\sigma(t,\widehat{x}_\theta(t)))\, \mathrm{d}t$ &
            $6.7182$ & $\textbf{6.6248}$ & $6.8269$ & $6.8508$ \\
            3 & $\int \rho_{20}(\mathcal{V}_\sigma(t,\widehat{x}_\theta(t)))\, \mathrm{d}t$ &
            $10.171$ & $9.6972$ & $\textbf{9.1291}$ & $9.148$ \\
			4& $\int \rho_{1000}(\mathcal{V}_\sigma(t,\widehat{x}_\theta(t)))\, \mathrm{d}t$ &
            $10.579$ & $10.111$ & $9.5187$ & $\textbf{9.4896}$  \\
            5& $\int \esssup \mathcal{V}_\sigma(t,\widehat{x}_\theta(t))\, \mathrm{d}t$ & 
            $10.589$ & $10.12$ & $9.5282$ & $\textbf{9.4986}$  \\
            \hline \hline
            & Log-normal parameters &&&& \\
            \hline \hline
			6& $\int \mathbb{E}[\mathcal{V}_\sigma(t,\widehat{x}_\theta(t))] \, \mathrm{d}t$ 
            & $\textbf{6.7095}$ & $7.7027$ & $9.2433$ & $9.2433$   \\
            7& $\int \rho_{0.5}(\mathcal{V}_\sigma(t,\widehat{x}_\theta(t)))\, \mathrm{d}t$ 
            & $20.581$ & $\textbf{8.521}$ & $9.7391$ & $9.738$ \\
            8& $\int \rho_{20}(\mathcal{V}_\sigma(t,\widehat{x}_\theta(t)))\, \mathrm{d}t$ &
            $37.676$ & $18.529$ & $\textbf{11.58}$ & $11.588$  \\
			9& $\int \rho_{1000}(\mathcal{V}_\sigma(t,\widehat{x}_\theta(t)))\, \mathrm{d}t$
            & $38.14$   & $19.007$  &  $11.958$ & $\textbf{11.935}$  \\
            10& $\int \esssup \mathcal{V}_\sigma(t,\widehat{x}_\theta(t))\, \mathrm{d}t$
            & $38.15$  &  $19.016$ &   $11.968$ & $\textbf{11.945}$  \\
            \hline
        \end{NiceTabular}
		\caption{Various risk measures integrated along selected state estimates}
		\label{tab: osc1}
	\end{center}
\end{table}
%

%
\subsection{Connected amplidynes with uncertain inductances}
%
For our second example we consider an electrical circuit amplifying a given input. 
The so-called amplidyne amplifies a given input and can be described via a linear dynamical system, see \cite[Ch.~1.12]{KwaSiv72}.{KwaSiv72}.
For our experiments we consider two such amplidynes that are connected such that the output of the first one acts as an input of the second one. The total system then consists of four components and is modeled as 
\begin{equation*}
	\begin{alignedat}{2}
		\dot{x}(t) &= 
		\begin{bmatrix}
			- \frac{\rho_1}{L_1} & 0 & 0 & 0\\
			\frac{k_1}{L_2} & - \frac{\rho_2}{L_2} & 0 & 0 \\
			0 & \frac{k_2}{L_3} & - \frac{\rho_3}{L_3} & 0 \\ 
			0 & 0 & \frac{k_3}{L_4} & - \frac{\rho_4}{L_4}
		\end{bmatrix}
		x(t)
		+
		\begin{bmatrix}
			\frac{e_0 (t)}{L_1} \\ 0 \\ 0 \\ 0
		\end{bmatrix}
		+
		\begin{bmatrix}
			\frac{1}{L_1} \\ 0 \\ 0 \\ 0
		\end{bmatrix}
		v(t)
		~~~~~&& t \in (0,T),\\
		x(0) &=
		x_0 + \eta, && \\
		y(t) 
		&= 
		\begin{bmatrix}
			0 & 0 & 0 & k_4
		\end{bmatrix}
		x(t)
		+ \mu(t)
		~~~~~&& t \in (0,T),
	\end{alignedat}
\end{equation*}
where $x(t) \in \mathbb{R}^4$ encodes the current of the individual components. The resistances and inductances of the individual components are given by $\rho_i > 0$ and $L_i > 0$, respectively. Further for $i=1,2,3,4$ we have  $e_i = k_i x_i$, i.e., $k_i > 0$ describe the proportion of the currents $x_i$ and the voltages $e_i$. By $e_0$ we denote the known, time-dependent input entering the first component. It is subject to the disturbance $v$. Finally, the measured output is the output of the second amplidyne $e_4 = k_4 x_4$.
The known forcing term given by $e_0$ can be incorporated in our formulation of the Kalman filter in a straightforward fashion.
Our numerical experiments are conducted with $\rho_1 = \rho_3 = 5$, $\rho_2 = \rho_4 = 10$, $k_1 = k_3 = 20$, $k_2 = k_4 = 50$, $L_1 = L_3 = 0.5$, $T=10$, and $e_0(t) \equiv 1$ and undisturbed initial state $x_0 = \begin{bmatrix} 0.5 & 1 & 10 & 20 \end{bmatrix}^\top$. The uncertainty of the system lies in the inductances $\sigma = \begin{bmatrix} L_2 & L_4 \end{bmatrix}^\top \in \mathbb{R}^2$. 
As noise covariances we use 
\begin{equation*}
	\begin{aligned}
		\Gamma &= 0.25~ \mathrm{diag} (\vert x_{0,1} \vert,\vert x_{0,2} \vert,\vert x_{0,3} \vert,\vert x_{0,4} \vert) = \mathrm{diag} (0.125,0.25,2.5,5),\\
		R &= (0.1 \vert e_0(0) \vert)^2 = 0.01,\\
		Q &= (0.1 * 400 \vert e_0(0) \vert)^2 = 1600,
	\end{aligned}
\end{equation*}
where the different magnitudes are due to the scales of the different components. 

Again we construct two parameter sets to compare their effects on the risk averse state estimators. Both contain $N_A = 100$ elements sampled according to given bivariate distributions. The first set of parameters is sampled from a uniform distribution on the rectangle $[10,40]^2$ and illustrated in \Cref{subf: param_3}. The second parameter set is sampled from a Gaussian mixture characterized via 
\begin{equation*}
    0.95 \, \mathcal{N}\left(
    \begin{bmatrix}
        15 \\ 35
    \end{bmatrix}
    ,
    \begin{bmatrix}
        2 & 0 \\ 0 & 2
    \end{bmatrix}\right)
    + 0.05 \, \mathcal{N}\left(
    \begin{bmatrix}
        35 \\ 15
    \end{bmatrix}
    ,
    \begin{bmatrix}
        1 & 0 \\ 0 & 1
    \end{bmatrix}\right),
\end{equation*}
i.e., every drawn sample has a chance of $0.95$ to be drawn from a normal distribution with mean $\begin{bmatrix} 15 & 35 \end{bmatrix}^\top$ and covariance $2 \mathrm{Id}$ and a $0.05$ chance to be drawn from a normal distribution with mean $\begin{bmatrix} 35 & 15 \end{bmatrix}^\top$ and covariance $ \mathrm{Id}$. The resulting parameter set is shown in \Cref{subf: param_4}. For both sets the parameter $\sigma = (L_2,L_4)$ that maximizes $L_2 - L_4$ is set to be the true underlying parameter. 

To illustrate the qualitative behavior of the trajectories we plot the second component of the resulting state $x$ and the state estimators in \Cref{subf: comp_2} and \Cref{subf: comp_2d}. Again we observe satisfying results obtained from the Kalman filter constructed based on the hidden parameter. Further, for both parameter sets the risk averse estimators seem to outperform the risk neutral option in terms of euclidean distance.
We note that for this example convergence for $\theta \to \infty$ is approached for far lower $\theta$ when compared to the oscillator, showing that the effective range of the risk aversion parameter is highly problem dependent.  
To further investigate the performances of the risk neutral estimator $\widehat{x}_0$ and the risk averse estimator $\widehat{x}_4$ we present the energies according to each parameter $\sigma \in \Sigma_A$ or equivalently for each $\sigma_k$, $k = 1,\dots,N_A$. More precisely, in \Cref{subf: ens_en1} and \Cref{subf: ens_en2} we plot the values
\begin{equation*}
    \mathcal{V}_{\sigma_k} (t,\widehat{x}_\theta(t))
    =
    \mathcal{V}_k (t,\widehat{x}_\theta(t))
\end{equation*}
along $t \in [0,T]$ and for $\theta = 0$ and $\theta = 4$. Additionally, we present the resulting risk measures
\begin{equation*}
    \mathbb{E}\left[ 
    \mathcal{V}_{\sigma_k} (t,\widehat{x}_\theta(t)) 
    \right]
    ~~~
    \text{and}
    ~~~
    \max_k \mathcal{V}_{\sigma_k} (t,\widehat{x}_\theta(t)),
\end{equation*}
again for $\theta = 0$ and $\theta = 4$. Comparing the dashed lines in \Cref{subf: ens_en1} we find that for the uniform parameter set $\widehat{x}_4$ significantly outperforms $\widehat{x}_0$ with respect to the maximum over the family members, showing that for this example the risk averse state estimation works exactly as intended. Interestingly, $\widehat{x}_4$ still performs relatively well with respect to the expectation when compared to the minimizer $\widehat{x}_0$, cf.~solid orange and dotted red line. Turning to \Cref{subf: ens_en2} we observe that the risk aversion takes an even stronger effect. It appears that the energies corresponding to the four outliers in the parameter set, cf.~\Cref{subf: param_4}, evaluated along $\widehat{x}_0$ clearly stand out resulting in a poor performance with respect to the maximum. On the other hand, in this example the relative difference of the expected energies along $\widehat{x}_4$ and $\widehat{x}_0$ is larger when compared to the uniform parameter set. 

\begin{figure}
	\begin{subfigure}{0.49\textwidth}
        \includegraphics[scale = 1]{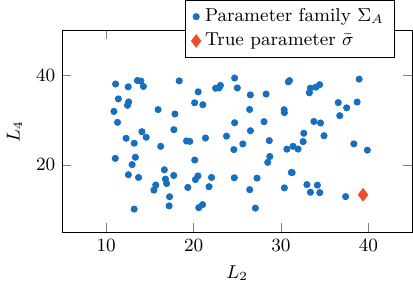}
		\caption{Uniform parameters}
		\label{subf: param_3}
	\end{subfigure}
	\begin{subfigure}{0.49\textwidth}
        \includegraphics[scale = 1]{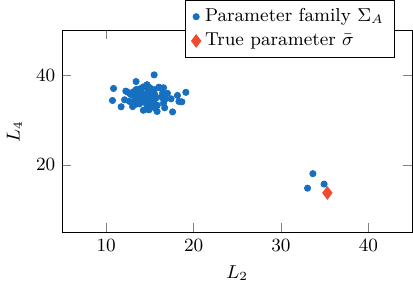}
		\caption{Parameters via Gaussian mixture}
		\label{subf: param_4}
	\end{subfigure}
	\begin{subfigure}{0.49\textwidth}
        \includegraphics[scale = 1]{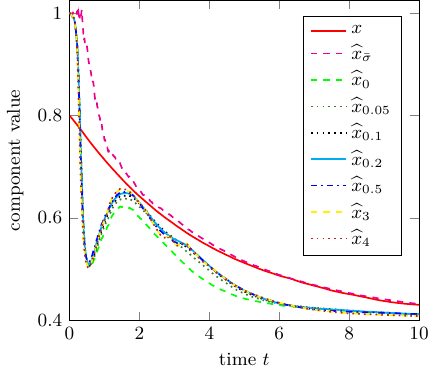}
		\caption{Second component along time}
		\label{subf: comp_2}
	\end{subfigure}
	\begin{subfigure}{0.49\textwidth}
        \includegraphics[scale = 1]{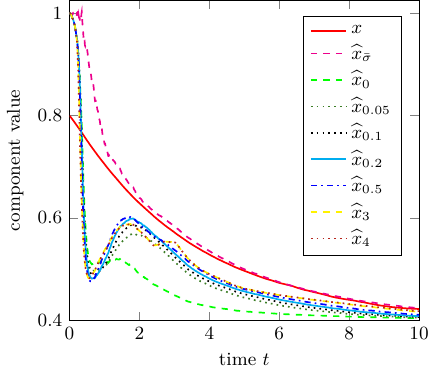}
		\caption{Second component along time}
		\label{subf: comp_2d}
	\end{subfigure}
	\begin{subfigure}{0.49\textwidth}
        \includegraphics[scale = 1]{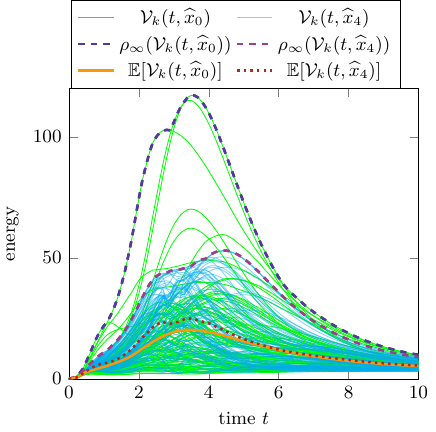}
		\caption{Energies via uniform parameters}
		\label{subf: ens_en1}
	\end{subfigure}
	\begin{subfigure}{0.49\textwidth}
        \includegraphics[scale = 1]{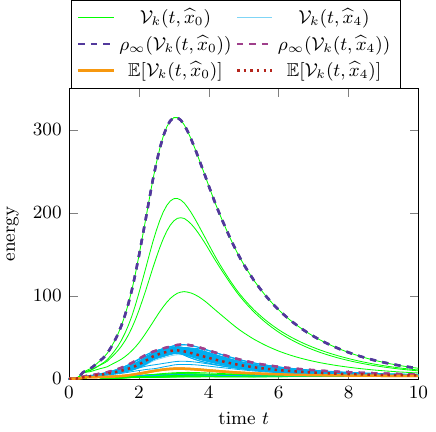}
		\caption{Energies via Gaussian mixture}
		\label{subf: ens_en2}
	\end{subfigure}
    \caption{Amplidyne with uncertain inductance}
\end{figure}
%

%
\section{Conclusion}
%
This work presents a novel approach for risk averse state estimation under uncertainty for linear, time-invariant, finite-dimensional systems. Investigating analogous concepts for time-dependent, nonlinear systems as well as for systems governed by PDEs could be the focus of future research.

%
\section*{Acknowledgement}
We thank P.~Guth (RICAM Linz) for many fruitful discussions on the entropic risk measure.  
%

%
%

\bibliographystyle{siam}
\bibliography{references} 

@article{Ait36,
    title={On Least Squares and Linear Combination of Observations}, volume={55}, 
    DOI={10.1017/S0370164600014346},
    journal={Proc. R. Soc. Edinb.}, 
    author={Aitken, A. C.}, 
    year={1936}, 
    pages={42–48}
}

@article{AzmK20,
	title   = {Analysis of the {B}arzilai-{B}orwein Step-Sizes for Problems in {H}ilbert Spaces},
	author  = {B.~Azmi and K.~Kunisch},
	journal = {J. Optim. Theory Appl.},
	year    = {2020},
	volume  = {185},
	pages   = {819--844},
	doi = {10.1007/s10957-020-01677-y},
}

@ARTICLE{BarB88,
	TITLE = "Two-Point Step Size Gradient Methods",
	AUTHOR = "J.~Barzilai and J.~M.~Borwein",
	YEAR = 1988,
	journal = "IMA J. Numer. Anal.",
	volume = "8",
	pages = "141--148",
	doi = "10.1093/imanum/8.1.141",
}

@ARTICLE{CalGaPo20,
  author={Calafiore, Giuseppe C. and Gaubert, Stephane and Possieri, Corrado},
  journal={IEEE Trans. Neural Netw. Learn. Syst.}, 
  title={Log-Sum-Exp Neural Networks and Posynomial Models for Convex and Log-Log-Convex Data}, 
  year={2020},
  volume={31},
  number={3},
  pages={827-838},
  doi={10.1109/TNNLS.2019.2910417}
}

@book{Cia78,
	title={The Finite Element Method for Elliptic Problems},
	author={Ciarlet, P.~G.},
	series={Studies in Mathematics and Its Applications},
	year={1978},
	publisher={North-Holland},
	address = {Amsterdam New York Oxford},
	edition = {first},
}

@article{DeaCa23,
	author = {Dearing, P.~M. and Cawood, M.~E.},
	title = {The minimum covering {E}uclidean ball of a set of Euclidean balls in $\mathbb{R}^n$},
	journal = {Ann. Oper. Res.},
	volume = {322},
	pages = {631--659},
	year = {2023},
	doi = {10.1007/s10479-022-05138-9},
}

@article{CaDea24,
	author = { Cawood, M.~E. and Dearing, P.~M.},
	title = {The weighted {E}uclidean one-center problem in $\mathbb{R}^n$},
	journal = {Comput. Optim. Appl.},
	volume = {89},
	pages = {553--574},
	year = {2024},
	doi = {10.1007/s10589-024-00599-z},
}

@book{DeGr70,
publisher = {McGraw-Hill},
year = {1970},
title = {Optimal statistical decisions},
author = {DeGroot, M.~H.},
address = {New York a.~o.},
edition = {first}
}

@book{EkeTem76,
publisher = {North-Holland Publ.~a.~o.},
year = {1976},
title = {Convex analysis and variational problems},
address = {Amsterdam a.~o.},
author = {Ekeland, I. and Temam, R.},
edition = {first}
}

@misc{FoeSchi16,
publisher = {De Gruyter},
year = {2016},
title = {Stochastic finance:~An introduction in discrete time},
edition = {fourth},
address = {Berlin, Boston},
author = {Föllmer, H. and Schied, A.},
}

@BOOK{Fol99,
	TITLE = {Real Analysis:~{M}odern Techniques and Their Applications},
	AUTHOR = "Folland, G.~B.",
	YEAR = 1999,
	PUBLISHER = "Wiley",
	ADDRESS = "New York a.~o.",
	EDITION = "second"
}

@article{GutEtAl24,
	title = {Parabolic {PDE}-constrained optimal control under uncertainty with entropic risk measure using quasi-{M}onte {C}arlo integration},
	journal = {Numer.~Math.},
	volume = {156},
	pages = {565–-608},
	year = {2024},
	doi = {10.1007/s00211-024-01397-9},
	author = {Guth, P. A. and Kaarnioja, V. and Kuo, F.~Y. and Schillings, C. and Sloan, I.~H.},
}

@article{GutKu25,
    title={Approximation of risk-averse optimal feedback control}, 
    author={Guth, P.~A. and Kunisch, K.},
    year={2025},
    journal = {prepr.},
    doi = {10.48550/arXiv.2508.15618},
}

@Article{HV82,
journal={Oper. Res.},
author={Donald W. Hearn and James Vijay},
title={Efficient Algorithms for the (Weighted) Minimum Circle Problem},
year={1982},
month={August},
pages={777-795},
volume={30},
number={4},
doi={10.1287/opre.30.4.777},
url={https://ideas.repec.org/a/inm/oropre/v30y1982i4p777-795.html},
}

@article{Kal60,
	author = {Kalman, R.~E.},
	title = {A New Approach to Linear Filtering and Prediction Problems},
	journal = {Transactions of the ASME–Journal of Basic Engineering},
	volume = {82},
	number = {1},
	year = {1960},
	pages = {35--45},
	doi = {10.1115/1.3662552},
}

@article{KalB61,
	author = {Kalman, R.~E. and Bucy, R.~S.},
	title = {New results in linear filtering and prediction theory},
	journal = {Transactions of the ASME–Journal of Basic Engineering},
	volume = {83},
	year = {1961},
	pages = {95--108},
	doi = {10.1115/1.3658902},
}

@article{KunSc25,
	title={Deterministic {K}alman filters for uncertain dynamical systems}, 
	journal = {prepr.},
	author={Kunisch, K. and Schröder, J.},
	year={2025},
	doi = {10.48550/arXiv.2506.00463},
}

@book{KwaSiv72,
publisher = {Wiley-Interscience, a division of John Wiley {\&} Sons, Inc.},
year = {1972},
title = {Linear optimal control systems},
address = {New York a.~o.},
author = {Kwakernaak, H. and Sivan, R.},
edition = {first},
}

@ARTICLE{Meh70,
    author={Mehra, R.},
    journal={IEEE Trans. Autom. Control}, 
    title={On the identification of variances and adaptive {K}alman filtering}, 
    year={1970},
    volume={15},
    number={2},
    pages={175-184},
    doi={10.1109/TAC.1970.1099422}
}

@Article{Mor68,
	Title            = {Maximum-Likelihood Recursive Nonlinear Filtering},
	Author           = {Mortensen, R.~E.},
	Year             = {1968},
	Number		   = {6},
	Volume		   = {2},
	Pages			   = {386--394},
	Journal		   = {J. Optim. Theory Appl.},
	doi = {10.1007/BF00925744},
}

@article{MorNaVi13,
	author = {Mordukhovich, B. and Nam, N.~M. and Villalobos, C.},
	title = {The smallest enclosing ball problem and the smallest
    intersecting ball problem: existence and uniqueness
    of solutions},
	journal = {Optim.~Lett.},
	volume = {7},
	pages = {839-–853},
	year = {2013},
	doi = {10.1007/s11590-012-0483-7},
}

@article{NamNgSa12,
	title = {Applications of Convex Analysis to the Smallest Intersecting Ball Problem},
	journal = {J. Convex Anal.},
	volume = {19},
    number = {2},
	pages = {497--518},
	year = {2012},
	author = {Nam, N.~M. and Nguyen, T.~A and Salinas, J.},
}

@book {Oks98,
	AUTHOR = {{\O}ksendal, B.},
	TITLE = {Stochastic differential equations},
	SERIES = {Universitext},
	EDITION = {fifth},
	PUBLISHER = {Springer-Verlag},
	address = {Berlin},
	YEAR = {1998},
}

@book{PatBe11,
	publisher = {Elsevier},
	year = {2011},
	title = {Statistical Mechanics},
	address = {New York a.~o.},
	author = {Pathria, R.~K. and Beale, P.~D.},
	edition = {third},
}

@misc{Sch25,
	author       = {Schröder, J.},
	title        = {{Code for the paper "Risk averse deterministic Kalman filters for uncertain dynamical systems"}},
	year         = 2025,
	publisher    = {Zenodo},
	version      = {1.0},
	doi          = {10.5281/zenodo.17581815},
	note 		 = {doi:~10.5281/zenodo.17581815}
}

@book{Schi07,
publisher = {Springer-Verlag},
year = {2007},
title = {Nonsmooth analysis},
address = {Berlin, Heidelberg},
author = {Schirotzek, W.},
edition = {first}
}

@INPROCEEDINGS{ShiJohMu07,
	author={Shi, Ling and Johansson, Karl Henrik and Murray, Richard M.},
	booktitle={2007 IEEE International Conference on Control Applications}, 
	title={Kalman Filtering with Uncertain Process and Measurement Noise Covariances with Application to State Estimation in Sensor Networks}, 	
	year={2007},	
	volume={},	
	number={},	
	pages={1031-1036},	
	doi={10.1109/CCA.2007.4389369}}

@book{Wie49,
	author = {Wiener, N.},
	title = {Extrapolation, Interpolation, and Smoothing of Stationary Time Series},
	year = {1949},
	publisher = {The MIT Press},
	edition = {first},
	address = {Cambridge},
}

@article{Wil04,
	title = {Deterministic least squares filtering},
	journal = {J. Econom.},
	volume = {118},
	number = {1},
	pages = {341--373},
	year = {2004},
	doi = {10.1016/S0304-4076(03)00146-5},
	author = {Willems, J.~C.},
}

@book{Xio08,
	author = {Xiong, J.},
	title = {An introduction to stochastic filtering theory},
	year = {2008},
	publisher = {Oxford University Press},
	address = {Oxford},
	edition = {first},
}

@BOOK{Zei95AMS109,
	TITLE = "Applied Functional Analysis:~Main Principles and Their Applications",
	AUTHOR = "Zeidler, E.",
	YEAR = 1995,
	PUBLISHER = "Springer",
	ADDRESS = "New York",
	EDITION = "first",
}

\appendix

%
\section{Technical proofs}
%

%
\subsection{Technical auxiliary results}\label[subsection]{subs: techn_results}
%
This section contains auxiliary results required for the analysis of this work. The first lemma is concerned with the eigenvalues of the precision matrices and their impact on the weighted norms. 
\begin{lemma}\label[lemma]{lem: estWeightNorm}
    Let $k \in \{ 1,\dots,N \}$, $t \in [0,T]$, and $\lambda_k(t) \in \mathbb{R}$ be an eigenvalue of $P_k(t)$, where $P_k$ are the precision matrices as introduced in \Cref{subsec: ModUnc}.  
    There exist real numbers $\lambda_\mathrm{min},\lambda_\mathrm{max} > 0$ independent of $t$ and $k$ such that
    \begin{equation*}
        \lambda_\mathrm{min} \leq \lambda_k(t) \leq \lambda_\mathrm{max}.
    \end{equation*}
    In particular for every $x \in \mathbb{R}^n$ it holds
    \begin{equation*}
        \lambda_\mathrm{min} \, \Vert x \Vert^2
        \leq 
        \Vert x \Vert_{P_k(t)}^2
        \leq 
        \lambda_\mathrm{max} \, \Vert x \Vert^2.
    \end{equation*}
\end{lemma}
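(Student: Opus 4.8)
## Proof Proposal

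The plan is to bound the eigenvalues of each precision matrix $P_k = \Pi_k^{-1}$ by first bounding the eigenvalues of the covariance matrices $\Pi_k$ from above and below, uniformly in $t$ and $k$, and then inverting. The key observations are that $\Pi_k$ solves the differential Riccati equation \eqref{eq: Ricc_k}, that there are only finitely many indices $k \in \{1,\dots,N\}$, and that each $\Pi_k$ is continuous (indeed $C^\infty$) on the compact interval $[0,T]$.

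First I would fix $k$ and establish uniform-in-$t$ bounds on the spectrum of $\Pi_k(t)$. Since $\Pi_k(0) = \Gamma_k$ is symmetric positive definite and the flow of the Riccati equation preserves symmetric positive definiteness, $\Pi_k(t)$ is symmetric positive definite for all $t \in [0,T]$; this is standard and may already be implicit in the earlier material guaranteeing $P_k = \Pi_k^{-1}$ is well-defined and in $C^\infty([0,T];\mathbb{R}^{n,n})$ by \eqref{eq: P_smooth}. For the upper bound on the eigenvalues of $\Pi_k$, observe that dropping the negative semidefinite term $-\Pi_k C^\top Q_k^{-1} C \Pi_k$ in \eqref{eq: Ricc_k} gives a comparison with the linear Lyapunov equation $\dot{Z}_k = A_k Z_k + Z_k A_k^\top + B R_k B^\top$, whose solution grows at most exponentially and is therefore bounded on $[0,T]$; a matrix comparison principle for Riccati equations then yields $\Pi_k(t) \le Z_k(t) \le C_k^{\mathrm{up}}\,\mathrm{Id}$. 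Alternatively — and more elementarily — since $\Pi_k \in C^\infty([0,T];\mathbb{R}^{n,n})$ and $[0,T]$ is compact, $\|\Pi_k(t)\|_2$ attains a maximum, giving a uniform upper bound on its eigenvalues directly; the same applies to $P_k = \Pi_k^{-1} \in C^\infty([0,T];\mathbb{R}^{n,n})$, so in fact the continuity/compactness argument alone suffices to bound both $\|\Pi_k\|_2$ and $\|P_k\|_2$ on $[0,T]$, hence to bound the eigenvalues of $P_k(t)$ above and below for fixed $k$.

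Next I would remove the dependence on $k$. For each $k$ let $\lambda_{\min}^{(k)} = \min_{t\in[0,T]} \lambda_{\min}(P_k(t)) > 0$ and $\lambda_{\max}^{(k)} = \max_{t\in[0,T]} \lambda_{\max}(P_k(t)) < \infty$, both finite and positive by the preceding step. Since there are only $N$ indices, set $\lambda_{\mathrm{min}} = \min_{k=1,\dots,N} \lambda_{\min}^{(k)} > 0$ and $\lambda_{\mathrm{max}} = \max_{k=1,\dots,N} \lambda_{\max}^{(k)} < \infty$; these are independent of $t$ and $k$ and satisfy $\lambda_{\mathrm{min}} \le \lambda_k(t) \le \lambda_{\mathrm{max}}$ for every eigenvalue $\lambda_k(t)$ of $P_k(t)$.

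Finally, the norm estimate follows immediately: for symmetric positive definite $P_k(t)$ with spectrum in $[\lambda_{\mathrm{min}}, \lambda_{\mathrm{max}}]$, the Rayleigh quotient bound gives $\lambda_{\mathrm{min}}\|x\|^2 \le x^\top P_k(t) x = \|x\|_{P_k(t)}^2 \le \lambda_{\mathrm{max}}\|x\|^2$ for all $x \in \mathbb{R}^n$. The only genuine subtlety — the "main obstacle" — is justifying that $\lambda_{\min}(P_k(t))$ stays bounded away from zero uniformly in $t$, equivalently that $\Pi_k(t)$ does not blow up on $[0,T]$; but since the Riccati solution exists on all of $[0,T]$ (already used to define $P_k$) and is continuous there, compactness of $[0,T]$ closes this gap without any delicate estimate. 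If one prefers an explicit constant rather than invoking compactness, the Lyapunov comparison $B R_k B^\top \le \Pi_k^{-1}(?)$... — more cleanly, $\dot{\Pi}_k \ge A_k \Pi_k + \Pi_k A_k^\top - \Pi_k C^\top Q_k^{-1} C \Pi_k$ is not directly a lower bound, so one instead writes the equation for $P_k$ in \eqref{eq: Prec_fixed}-type form, $\dot{P}_k = -A_k^\top P_k - P_k A_k - P_k B R_k B^\top P_k + C^\top Q_k^{-1} C$, drop $-P_k B R_k B^\top P_k$ to compare $P_k$ from below with a linear equation, and bound $C^\top Q_k^{-1} C$ using $\lambda_{\max}(Q_k^{-1}) < \infty$; this produces an explicit exponential-type lower bound on $P_k(t)$ over $[0,T]$. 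Either route completes the proof.
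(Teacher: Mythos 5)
Your proof is correct and, at its core, is the same as the paper's: the paper also argues via the continuous dependence of eigenvalues on the matrix entries, the regularity $P_k \in C^\infty([0,T];\mathbb{R}^{n,n})$ from \eqref{eq: P_smooth} together with compactness of $[0,T]$ and finiteness of $N$, positive definiteness for the strict lower bound, and symmetry for the Rayleigh-quotient norm estimates. The additional Riccati/Lyapunov comparison routes you sketch are not needed and are not used in the paper.
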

\begin{proof}
    The boundedness of the eigenvalues follows from their continuous dependence on the matrix entries and the regularity of $P_k$ presented in \eqref{eq: P_smooth}. The positivity of the lower bound is a consequence of the positive definiteness of $P_k$. The estimates for the weighted norm are a consequence of the symmetry of $P_k$.
\end{proof}
Next we estimate the spectral norm of the inverse of a  convex combination of the precision matrices.
\begin{lemma}\label[lemma]{lem: InvOfPrecSumBD}
    Let $P_k$ be the precision matrices and let $S \subset \{1,\dots,N\}$ and $(\gamma_k )_{k \in S}$ be such that $\sum_{k \in S} \gamma_k =1$. Then for every $t \in [0,T]$ the matrix $\sum_{k \in S} \gamma_k P_k(t)$ is non singular and
    \begin{equation*}
        \Vert ( \sum_{k \in S} \gamma_k P_k(t) )^{-1} \Vert_2
        \leq \lambda_\mathrm{min}^{-1}.
    \end{equation*}
\end{lemma}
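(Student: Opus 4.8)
The plan is to reduce the bound on $\Vert (\sum_{k\in S}\gamma_k P_k(t))^{-1}\Vert_2$ to a statement about the smallest eigenvalue of the convex combination, and then to lower-bound that eigenvalue using \Cref{lem: estWeightNorm}. First I would note that each $P_k(t)$ is symmetric and positive definite, so any convex combination $M(t) := \sum_{k\in S}\gamma_k P_k(t)$ with $\gamma_k \ge 0$ and $\sum_{k\in S}\gamma_k = 1$ is again symmetric positive definite; in particular it is non singular, and its spectral norm inverse equals the reciprocal of its smallest eigenvalue, i.e. $\Vert M(t)^{-1}\Vert_2 = (\lambda_{\min}(M(t)))^{-1}$.

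Next I would bound $\lambda_{\min}(M(t))$ from below by $\lambda_{\mathrm{min}}$. By the Rayleigh quotient characterization, for any unit vector $x\in\mathbb{R}^n$ we have $x^\top M(t) x = \sum_{k\in S}\gamma_k\, x^\top P_k(t) x = \sum_{k\in S}\gamma_k \Vert x\Vert_{P_k(t)}^2 \ge \sum_{k\in S}\gamma_k\,\lambda_{\mathrm{min}}\Vert x\Vert^2 = \lambda_{\mathrm{min}}$, where the inequality uses the uniform lower estimate $\Vert x\Vert_{P_k(t)}^2 \ge \lambda_{\mathrm{min}}\Vert x\Vert^2$ from \Cref{lem: estWeightNorm} and the fact that the $\gamma_k$ sum to one. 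Taking the infimum over unit vectors gives $\lambda_{\min}(M(t)) \ge \lambda_{\mathrm{min}} > 0$, which simultaneously reconfirms non singularity and yields $\Vert M(t)^{-1}\Vert_2 \le \lambda_{\mathrm{min}}^{-1}$ for every $t\in[0,T]$.

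There is no real obstacle here; the only point requiring a moment's care is that the lemma statement writes "convex combination" but the hypotheses as stated only demand $\sum_{k\in S}\gamma_k = 1$, so one should either read off nonnegativity of the $\gamma_k$ from the intended application (where they arise as the $\alpha_k$ of \Cref{thm: char_inft} or the $c_k$ of \eqref{eq: constants_e}, both in $[0,1]$) or, more robustly, observe that even without sign assumptions the same computation shows $x^\top M(t)x \ge \lambda_{\mathrm{min}}\Vert x\Vert^2$ fails in general — so nonnegativity genuinely is needed and should be invoked. I would phrase the proof assuming $\gamma_k\ge 0$, since that is what all uses of the lemma supply, and note that the bound $\lambda_{\mathrm{min}}$ is independent of $t$ because \Cref{lem: estWeightNorm} already provides a $t$-uniform lower eigenvalue bound.
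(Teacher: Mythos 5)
Your proof is correct and follows essentially the same route as the paper: both establish the coercivity estimate $x^\top \bigl(\sum_{k\in S}\gamma_k P_k(t)\bigr)x \ge \lambda_{\mathrm{min}}\Vert x\Vert^2$ from \Cref{lem: estWeightNorm} and the normalization of the $\gamma_k$, the paper then concluding via the Lax--Milgram lemma where you use the Rayleigh-quotient characterization of the smallest eigenvalue. Your side remark that nonnegativity of the $\gamma_k$ is genuinely needed (and is tacitly assumed in the paper's proof as well, though absent from the stated hypotheses) is a fair and accurate observation.
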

\begin{proof}
    For every $x \in\mathbb{R}^n$ we have
    \begin{equation*}
        \left\langle x, \sum_{k \in S} \gamma_k P_k(t) x \right\rangle
        \geq
        \sum_{k \in S} \gamma_k \lambda_\mathrm{min} \Vert x \Vert^2
        =
        \lambda_\mathrm{min} \Vert x \Vert^2.
    \end{equation*}
    The assertion follows with the Lax-Milgram Lemma \cite[Thm.~1.1.3 {\&} Rem.~1.1.3]{Cia78}.
\end{proof}
%

%
\subsection{Proof of \Cref{thm: regularity_x_theta}}\label[subsection]{subs: proof_reg_EntrR_Min}
%

This subsection contains the proof of the regularity of the estimator minimizing the entropic risk of the energy. 
We begin by showing the result for continuous outputs $y$. The result for $p$-integrable outputs will follow using an approximation by continuous elements.

Throughout this subsection we use the following notation. For a given $y \in \mathcal{L}_{0,T}^2$ and $\theta \in (0,\infty)$ we denote by $\widehat{x}_k[y]$ the unique solution associated with \eqref{eq: KF_k}, we set
\begin{equation}\label{eq: r[y]}
    r_k[y](t) = \int_0^t \Vert y(s) - C \widehat{x}_k[y](s) \Vert_{Q_k^{-1}}^2 \, \mathrm{d}s.
\end{equation}
By $\widehat{x}_\theta[y]$ we denote the associated minimizer of the entropic risk as defined in \Cref{def:minEntrR}.
We begin with a a uniform bound of the estimator.
\begin{lemma}\label[lemma]{lem: UnifBoundTheta}
    Let $t \in [0,T]$, $\theta \in (0,\infty)$, and $y \in \mathcal{L}_{0,T}^2$. There exists a constant $\widehat{C} > 0$ independent of $t$, $\theta$, and $y$ such that
    \begin{equation*}
        \Vert \widehat{x}_\theta[y] (t) \Vert^2 
        \leq
        \widehat{C} \left(
        \Vert x_0 \Vert^2 + \Vert y \Vert_{\mathcal{L}_{0,T}^2}^2 \right).
    \end{equation*}
    In particular $\widehat{x}_\theta[y] \in \mathcal{L}_{0,T}^\infty$.
\end{lemma}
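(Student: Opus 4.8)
The plan is to exploit that, by \Cref{lem: charEntrMin}, $\widehat{x}_\theta[y](t)$ is the unique global minimizer of the strictly convex, coercive function $F(x):=\rho_\theta(\mathcal{V}_\sigma(t,x))$ from \eqref{eq: minEntrRisk}, and to sandwich $F$ evaluated at this minimizer between a coercive lower bound in $x$ and a $\theta$-independent upper bound obtained by comparing with the fixed point $x=0$. Fix $t\in[0,T]$, $\theta\in(0,\infty)$ and $y\in\mathcal{L}_{0,T}^2$, and abbreviate $\widehat{x}_k:=\widehat{x}_k[y](t)$, $\widehat{x}_\theta:=\widehat{x}_\theta[y](t)$ and $r_k:=r_k[y](t)\ge 0$.

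For the lower bound I would first use Jensen's inequality for the convex map $u\mapsto e^{\theta u}$, which gives $\rho_\theta(X)\ge\mathbb{E}[X]$ for every random variable $X$ and every $\theta>0$; combined with the representation \eqref{eq: VF_k} and \Cref{lem: estWeightNorm} this yields, for all $x\in\mathbb{R}^n$,
\[
F(x)\;\ge\;\frac1N\sum_{k=1}^N\mathcal{V}_k(t,x)\;\ge\;\frac{\lambda_{\mathrm{min}}}{N}\sum_{k=1}^N\|x-\widehat{x}_k\|^2\;\ge\;\frac{\lambda_{\mathrm{min}}}{2}\|x\|^2-\lambda_{\mathrm{min}}\,\frac1N\sum_{k=1}^N\|\widehat{x}_k\|^2,
\]
where the last step uses $\|x-\widehat{x}_k\|^2\ge\tfrac12\|x\|^2-\|\widehat{x}_k\|^2$. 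For the upper bound I would use minimality of $\widehat{x}_\theta$, namely $F(\widehat{x}_\theta)\le F(0)=\rho_\theta(\mathcal{V}_\sigma(t,0))$, together with $\rho_\theta(X)\le\esssup_{\sigma}X(\sigma)=\max_kX(\sigma_k)$, which holds for every $\theta>0$ since $\tfrac1N\sum_k e^{\theta X(\sigma_k)}\le e^{\theta\max_kX(\sigma_k)}$ (this is also part of the proof of \Cref{lem: propEntrRisk}); hence $F(\widehat{x}_\theta)\le\max_k\mathcal{V}_k(t,0)=\max_k\bigl(\|\widehat{x}_k\|_{P_k(t)}^2+r_k\bigr)\le\lambda_{\mathrm{max}}\max_k\|\widehat{x}_k\|^2+\max_k r_k$. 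Taking $x=\widehat{x}_\theta$ in the lower bound and combining with the upper bound, together with $\tfrac1N\sum_k(\cdot)\le\max_k(\cdot)$, gives
\[
\frac{\lambda_{\mathrm{min}}}{2}\,\|\widehat{x}_\theta\|^2\;\le\;(\lambda_{\mathrm{min}}+\lambda_{\mathrm{max}})\max_{k=1,\dots,N}\|\widehat{x}_k(t)\|^2+\max_{k=1,\dots,N}r_k(t),
\]
with a right-hand side independent of $\theta$.

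It then remains to bound $\sup_{t\in[0,T]}\|\widehat{x}_k(t)\|$ and $r_k(t)$ by $\|x_0\|^2+\|y\|_{\mathcal{L}_{0,T}^2}^2$ uniformly in $k$. Reading \eqref{eq: KF_k} as the linear ODE $\dot{\widehat{x}}_k=(A_k-\Pi_kC^\top Q_k^{-1}C)\widehat{x}_k+\Pi_kC^\top Q_k^{-1}y$ with $\widehat{x}_k(0)=x_0$, and using that $\Pi_k$ is continuous on the compact interval $[0,T]$, Gronwall's inequality together with Cauchy--Schwarz in time ($\|y\|_{L^1}\le\sqrt{T}\,\|y\|_{L^2}$) yields a constant $\kappa_k>0$ with $\sup_{t\in[0,T]}\|\widehat{x}_k(t)\|^2\le\kappa_k(\|x_0\|^2+\|y\|_{\mathcal{L}_{0,T}^2}^2)$; the definition of $r_k$ in \eqref{eq: r[y]} and the continuity of $Q_k^{-1}$ give a bound of the same form for $r_k(t)$. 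Since only finitely many indices occur, one takes the maximum of the constants and substitutes into the previous display to obtain the claim with a suitable $\widehat{C}$; the inclusion $\widehat{x}_\theta[y]\in\mathcal{L}_{0,T}^\infty$ is then immediate. The only point requiring care is keeping every estimate uniform in $\theta$, which is exactly why the lower bound goes through Jensen's inequality $\rho_\theta\ge\mathbb{E}$ and the upper bound through the $\theta$-free inequality $\rho_\theta\le\esssup$; the Gronwall estimate for the Kalman trajectories itself is routine.
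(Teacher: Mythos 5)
Your argument is correct, but it follows a genuinely different route from the paper's. The paper's proof is representation-based: it invokes the first-order optimality formula \eqref{eq: charMinEntrR2}, which writes $\widehat{x}_\theta[y](t)$ as $\bigl(\sum_k c_k P_k(t)\bigr)^{-1}\sum_k c_k P_k(t)\widehat{x}_k[y](t)$ with convex weights $c_k\in[0,1]$, $\sum_k c_k=1$, and then bounds the inverse uniformly in $t$ and $\theta$ via \Cref{lem: InvOfPrecSumBD} (the argument of \Cref{cor: min_inf_essBd}), so that $\Vert\widehat{x}_\theta[y](t)\Vert\le\lambda_{\mathrm{min}}^{-1}\lambda_{\mathrm{max}}\max_k\Vert\widehat{x}_k[y](t)\Vert$; the $\theta$-uniformity is absorbed into the fact that the weights lie in the simplex for every $\theta$. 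You instead argue variationally, never touching the optimality system: a coercive lower bound on $\rho_\theta(\mathcal{V}_\sigma(t,\cdot))$ via Jensen's inequality $\rho_\theta\ge\mathbb{E}$ and \Cref{lem: estWeightNorm}, and an upper bound by comparing with the test point $x=0$ together with $\rho_\theta\le\esssup$. This sandwich is slightly longer but more self-contained (it needs neither \eqref{eq: charMinEntrR2} nor \Cref{lem: InvOfPrecSumBD}, only the definition of the minimizer and the two $\theta$-free inequalities on $\rho_\theta$), and it would survive in settings where an explicit representation of the minimizer is unavailable; the paper's route yields a cleaner constant and reuses machinery already established for \Cref{cor: min_inf_essBd}. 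Both proofs terminate identically in the Gronwall estimate $\sup_{t}\Vert\widehat{x}_k[y](t)\Vert^2+\sup_t r_k[y](t)\lesssim\Vert x_0\Vert^2+\Vert y\Vert_{\mathcal{L}_{0,T}^2}^2$ over the finitely many indices $k$, so your conclusion matches the stated bound.
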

\begin{proof}
    Using the characterization of $\widehat{x}_\theta[y](t)$ given in \eqref{eq: charMinEntrR2} the proof can be conducted via the arguments used in the proof of \Cref{cor: min_inf_essBd}. The particular bound in terms of $x_0$ and $y$ follows from the estimate of $\widehat{x}_k$ obtained via a Gronwall estimate.  
\end{proof}
Next we show a regularity result for $\widehat{x}_\theta[y]$ under the assumption of continuous data. To that end we define the mapping
\begin{equation}\label{eq: implRiskMin}
\begin{aligned}
    &G[\theta,y] \colon [0,T] \times \mathbb{R}^n \to \mathbb{R}^n\\
    &G[\theta,y](t,x) \coloneqq \sum_{k=1}^N e^{ \theta \Vert x - \widehat{x}_k[y](t) \Vert_{P_k(t)}^2 + \theta r_k[y](t) } P_k(t) (x - \widehat{x}_k[y](t)). 
\end{aligned}
\end{equation}
The following lemma investigates its regularity depending on the smoothness of the output. Again for $x \in \mathbb{R}^n$ we denote $x_\otimes^2 = x \otimes x \in \mathbb{R}^{n,n}$.
\begin{lemma}\label[lemma]{lem: regImpl}
    Let $\theta \in (0,\infty)$ and $y \in \mathcal{L}_{0,T}^2$. 
    \vspace{1mm}\\
    \begin{enumerate}
        \item For every $t \in [0,T]$ the mapping $G[\theta,y](t,\cdot)$ is of class $C^\infty$. 
        Its first derivative is given by
        \begin{equation}\label{eq: F_x_derivative}
        \begin{aligned}
            &D_x G[\theta,y](t,x)\\
            &=
            \sum_{k=1}^N  e^{\theta \Vert x - \widehat{x}_k[y](t) \Vert_{P_k(t)}^2 + \theta r_k[y](t)}
            \left(
            P_k(t) + 
            2 \theta \left[P_k(t) (x - \widehat{x}_k[y](t)) \right]_\otimes^2 
            \right).
        \end{aligned}
        \end{equation}
        \vspace{1mm}
        \item For every $t \in [0,T]$ and $x \in \mathbb{R}^n$ the matrix $D_x G[\theta,y](x,t)$ is invertible. Further there exists a constant $c$ independent of $\theta$, $y$, $t$, and $x$ such that
        \begin{equation*}
            \Vert D_x G[\theta,y](t,x)^{-1} \Vert_2 \leq c.
        \end{equation*}
        \vspace{1mm}
        \item Assuming continuity of the output, i.e., $y \in C([0,T];\mathbb{R}^r)$ it holds that $G[\theta,y]$ is continuously differentiable in $[0,T] \times \mathbb{R}^n$.
        Its partial time derivative is given as
        \begin{equation}\label{eq: F_t_derivative}
        \begin{aligned}
            \partial_t G[\theta,y](t,x) 
            =  
            \sum_{k=1}^N& e^{\theta \Vert x - \widehat{x}_k[y](t) \Vert_{P_k(t)}^2 + \theta r_k[y](t)}
            \left( 
            - P_k(t) \dot{\widehat{x}}_k[y](t)
            + \dot{P}_k(t) (x - \widehat{x}_k[y](t) )
            \right.\\
            &- 2 \theta \langle x - \widehat{x}_k[y](t) , P_k(t) \dot{\widehat{x}}_k[y](t) \rangle  \, P_k(t) (x - \widehat{x}_k[y](t))\\
            &\left.
            \vphantom{- P_k(t) \dot{\widehat{x}}_k[y](t)
            + \dot{P}_k(t) (x - \widehat{x}_k[y](t) )}
            + \theta \, \Vert y(t) - C \widehat{x}_k[y](t) \Vert_{Q_k^{-1}}^2
            \, P_k(t) (x - \widehat{x}_k[y](t))
            \right).
        \end{aligned}
        \end{equation}   
    \end{enumerate}
\end{lemma}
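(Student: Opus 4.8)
I would differentiate the finite sum defining $G[\theta,y]$ term by term, writing the $k$-th summand as $e_k(t,x)\,v_k(t,x)$ with scalar factor $e_k(t,x)=\exp\bigl(\theta\Vert x-\widehat{x}_k[y](t)\Vert_{P_k(t)}^2+\theta\,r_k[y](t)\bigr)$ and vector field $v_k(t,x)=P_k(t)\bigl(x-\widehat{x}_k[y](t)\bigr)$, and reassemble the pieces with the product rule. For part (1), fixing $t$ makes $P_k(t)$, $\widehat{x}_k[y](t)$ and $r_k[y](t)$ constants, so $x\mapsto e_k(t,x)$ is a quadratic polynomial composed with $\exp$ while $x\mapsto v_k(t,x)$ is affine; both are $C^\infty$, hence so is their product and the finite sum. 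Differentiating in $x$ and using $\nabla_x\Vert x-\widehat{x}_k[y](t)\Vert_{P_k(t)}^2=2P_k(t)(x-\widehat{x}_k[y](t))$ (by symmetry of $P_k(t)$) together with $D_xv_k(t,x)=P_k(t)$ produces exactly the two contributions $e_k(t,x)\,2\theta\,[P_k(t)(x-\widehat{x}_k[y](t))]_\otimes^2$ and $e_k(t,x)\,P_k(t)$ appearing in \eqref{eq: F_x_derivative}.

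For part (2) the decisive point is that the exponent obeys $\theta\Vert x-\widehat{x}_k[y](t)\Vert_{P_k(t)}^2+\theta\,r_k[y](t)=\theta\,\mathcal{V}_k(t,x)\ge 0$, since $r_k[y](t)\ge 0$ and the weighted norm is non-negative; hence $e_k(t,x)\ge 1$ for all $t$, $x$ and $\theta>0$. Each matrix $P_k(t)+2\theta\,[P_k(t)(x-\widehat{x}_k[y](t))]_\otimes^2$ is symmetric positive definite, being the sum of the positive definite $P_k(t)$ and a positive semi-definite outer product, so $D_xG[\theta,y](t,x)\ge\sum_{k=1}^N P_k(t)\ge P_1(t)\ge\lambda_\mathrm{min}\,\mathrm{Id}$ with $\lambda_\mathrm{min}>0$ the uniform lower eigenvalue bound of \Cref{lem: estWeightNorm}. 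This gives invertibility and $\Vert D_xG[\theta,y](t,x)^{-1}\Vert_2\le\lambda_\mathrm{min}^{-1}$, a bound independent of $\theta$, $y$, $t$ and $x$, so one may take $c=\lambda_\mathrm{min}^{-1}$.

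For part (3), under $y\in C([0,T];\mathbb{R}^r)$ the right-hand side of \eqref{eq: KF_k} is continuous in $t$, so $\widehat{x}_k[y]\in C^1([0,T];\mathbb{R}^n)$; then the integrand in \eqref{eq: r[y]} is continuous, so $r_k[y]\in C^1([0,T])$ with $\dot r_k[y](t)=\Vert y(t)-C\widehat{x}_k[y](t)\Vert_{Q_k^{-1}}^2$; and $P_k\in C^\infty$ by \eqref{eq: P_smooth}. Every ingredient of $e_k$ and $v_k$ is then jointly $C^1$ in $(t,x)$, and so is $G[\theta,y]$. Differentiating each summand $e_kv_k$ in $t$ by the product rule (using $\partial_tv_k(t,x)=\dot P_k(t)(x-\widehat{x}_k[y](t))-P_k(t)\dot{\widehat{x}}_k[y](t)$, the expression for $\dot r_k[y]$ just obtained, and the $t$-derivative of the quadratic form $x\mapsto\Vert x-\widehat{x}_k[y](t)\Vert_{P_k(t)}^2$) and collecting terms yields \eqref{eq: F_t_derivative}.

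Essentially everything here is routine; the one point worth isolating is the $\theta$-uniformity of the bound in part (2), which works precisely because $\mathcal{V}_k(t,x)\ge 0$, so the exponential weights stay bounded below by $1$ instead of degenerating as $\theta\to\infty$. Likewise, the extra continuity hypothesis in part (3) is exactly what promotes the Kalman trajectories $\widehat{x}_k[y]$, and with them $G[\theta,y]$, from $W^{1,1}$ to $C^1$ in time.
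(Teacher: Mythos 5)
Your proposal is correct and follows essentially the same route as the paper: term-by-term product/chain rule for (1) and (3), the bound $e^{\theta\mathcal{V}_k(t,x)}\ge 1$ combined with positive (semi-)definiteness to get $D_xG[\theta,y](t,x)\ge P_1(t)$ for (2), and the bootstrapping $y$ continuous $\Rightarrow\widehat{x}_k[y]\in C^1\Rightarrow r_k[y]\in C^1$ for (3). The only cosmetic difference is that you conclude (2) directly from the uniform eigenvalue bound $\lambda_{\mathrm{min}}$ of \Cref{lem: estWeightNorm}, whereas the paper takes $\min_{t}z^\top P_1(t)z$ and invokes the Lax--Milgram lemma --- the same estimate in different clothing.
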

\begin{proof}
    The regularity with respect to $x$ and the formula for the associated partial derivative are a direct consequence of the chain rule and the regularity of the exponential and the squared weighted norm proving (i). 

    To show (ii) let $t \in [0,T]$ and $x,z \in \mathbb{R}^n$ be arbitrary. Utilizing \eqref{eq: F_x_derivative}, the positive definiteness of $P_k(t)$, and the fact that $e^a \geq 1$ for $a \geq 0$ we find 
    \begin{equation*}
    \begin{aligned}
        &z^\top D_x G[\theta,y](t,x) z
        \geq
        \sum_{k=1}^N z^\top P_k(t) z
        +
        2 \theta 
        \left\vert z^\top P_k(t) (x - \widehat{x}_k[y](t)) \right\vert^2
        \geq 
        \min_{t \in [0,T]} z^\top P_1(t) z \eqqcolon c^\prime,
    \end{aligned}
    \end{equation*}
    where the continuity of $P_1$ ensures that the minimum on the right hand side is attained. The Lax-Milgram Lemma \cite[Thm.~1.1.3 {\&} Rem.~1.1.3]{Cia78} yields the invertibility and the bound with $c \coloneqq \tfrac{1}{c^\prime}$. 

    Turning to the regularity in time we note that $\Pi_k$ given as a solution of \eqref{eq: Ricc_k} is continuously differentiable. Further, for $y \in C([0,T];\mathbb{R}^r)$ we find that the right hand side of \eqref{eq: KF_k} is continuous. For the associated weak solution it follows $\widehat{x}_k[y] \in C^1([0,T];\mathbb{R}^n)$.  
    Additionally, the integrant in \eqref{eq: r[y]} is continuous, ensuring $r_k[y] \in C^1([0,T];\mathbb{R})$.
    It follows that $t \to G[\theta,y](t,x)$ is a composition of continuously differentiable functions. Hence for $y \in C([0,T];\mathbb{R}^r)$ the formula for the time derivative follows with the chain rule. Since both partial derivatives exist and are continuous in $[0,T] \times \mathbb{R}^n$, we obtain $G[\theta,y] \in C^1([0,T] \times \mathbb{R}^n;\mathbb{R}^n)$.
\end{proof}
With these results at hand we can show the first regularity result\\
\vspace{1mm}\\
\textbf{Proof of \Cref{thm: regularity_x_theta}(ii)}

\begin{proof}
    The assertion is a direct consequence of the implicit function theorem. \Cref{lem: charEntrMin} shows that $\widehat{x}_\theta [y]$ is implicitly defined by \eqref{eq: implRiskMin}, i.e., for a given $t \in [0,T]$ we have that $x = \widehat{x}_\theta [y](t)$ is the unique solution of 
    \begin{equation*}
        0 = G[\theta,y](t,x).
    \end{equation*}
    With the results from \Cref{lem: regImpl} we can apply the implicit function theorem \cite[Thm.~4.E]{Zei95AMS109}
    and obtain $\widehat{x}_\theta[y] \in C^1([0,T];\R^n)$ and the following formula for the derivative
    \begin{equation}\label{eq: Dt_x_theta}
        \dot{\widehat{x}}_\theta[y](t)
        =
        - D_x G[\theta,y](t,\widehat{x}_\theta[y](t))^{-1}
        \partial_t G[\theta,y](t,\widehat{x}_\theta[y](t)).
    \end{equation}
    In light of the identities \eqref{eq: F_x_derivative} and \eqref{eq: F_t_derivative} this is the formula for the derivative as announced in \Cref{thm: regularity_x_theta}.
\end{proof}
This result will be carried over to less regular measurements $y$ via a density argument. To that end we first ensure that the terms appearing in the energy $\mathcal{V}_k$ depend continuously on the output in an appropriate sense.
\begin{lemma}\label[lemma]{lem: ContInYEnergy}
    For all $k \in \{ 1,\dots,N \}$, $t \in (0,T]$, and $s \in [0,t]$ the mappings
    \begin{equation*}
        y \in \mathcal{L}_{0,t}^2 \mapsto \widehat{x}_k[y] \in \mathcal{H}_0^t,
        ~~~~~  ~~~~~~
        y \in \mathcal{L}_{0,t}^2 \mapsto \widehat{x}_k[y](s) \in \mathbb{R}^n
    \end{equation*}
    and
    \begin{equation*}
        y \in \mathcal{L}_{0,t}^2 \mapsto r_k[y] \in C([0,t];\mathbb{R}),
        ~~~~~  ~~~~~~
        y \in \mathcal{L}_{0,t}^2 \mapsto r_k[y](s) \in \mathbb{R},
    \end{equation*}
    are continuous. 
\end{lemma}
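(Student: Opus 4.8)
The plan is to establish these continuity statements by working through the chain of dependencies: first the map $y \mapsto \widehat{x}_k[y]$, then the evaluation maps, then the map $y \mapsto r_k[y]$, and finally its evaluations. Throughout I fix $k \in \{1,\dots,N\}$ and $t \in (0,T]$; note that $\Pi_k$ (hence $P_k$ and the matrices $A_k$, $Q_k^{-1}$, etc.) does not depend on $y$, so the only $y$-dependence enters through the filter trajectory and the residual integral.

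First I would treat $y \mapsto \widehat{x}_k[y]$. Writing $e(t) = \widehat{x}_k[y_1](t) - \widehat{x}_k[y_2](t)$, subtracting the two instances of \eqref{eq: KF_k} gives a linear ODE $\dot e(t) = (A_k - \Pi_k(t) C^\top Q_k^{-1} C) e(t) + \Pi_k(t) C^\top Q_k^{-1} (y_1(t) - y_2(t))$ with $e(0) = 0$. Since $A_k - \Pi_k(\cdot)C^\top Q_k^{-1}C$ is continuous (hence bounded) on $[0,t]$ and $\Pi_k(\cdot)C^\top Q_k^{-1}$ is bounded, a Gr\"onwall estimate yields $\Vert e \Vert_{\mathcal{H}_0^t} \le c \, \Vert y_1 - y_2 \Vert_{\mathcal{L}_{0,t}^2}$ for a constant $c$ depending only on $T$ and the matrices; this is in fact Lipschitz continuity, which is stronger than what is claimed. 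The evaluation map $y \mapsto \widehat{x}_k[y](s)$ is then continuous because the continuous embedding $\mathcal{H}_0^t \hookrightarrow C([0,t];\mathbb{R}^n)$ composed with point evaluation at $s$ is bounded, so $\Vert \widehat{x}_k[y_1](s) - \widehat{x}_k[y_2](s)\Vert \le c' \Vert y_1 - y_2\Vert_{\mathcal{L}_{0,t}^2}$.

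Next I would handle $y \mapsto r_k[y]$ as a map into $C([0,t];\mathbb{R})$. Write $r_k[y](\tau) = \int_0^\tau g_y(s)\,\mathrm ds$ with $g_y(s) = \Vert y(s) - C\widehat{x}_k[y](s)\Vert_{Q_k^{-1}}^2$. For two outputs one estimates $|r_k[y_1](\tau) - r_k[y_2](\tau)| \le \int_0^t |g_{y_1}(s) - g_{y_2}(s)|\,\mathrm ds$, uniformly in $\tau$. Using the polarization identity $\Vert a\Vert_M^2 - \Vert b\Vert_M^2 = \langle a-b, M(a+b)\rangle$ with $a = y_1(s) - C\widehat{x}_k[y_1](s)$, $b = y_2(s) - C\widehat{x}_k[y_2](s)$, Cauchy--Schwarz gives $\int_0^t |g_{y_1} - g_{y_2}| \le \Vert Q_k^{-1}\Vert_2 \, \Vert a - b\Vert_{\mathcal{L}_{0,t}^2} \, \Vert a + b\Vert_{\mathcal{L}_{0,t}^2}$. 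Here $\Vert a - b\Vert_{\mathcal{L}_{0,t}^2} \le \Vert y_1 - y_2\Vert_{\mathcal{L}_{0,t}^2} + \Vert C\Vert_2 \Vert \widehat{x}_k[y_1] - \widehat{x}_k[y_2]\Vert_{\mathcal{L}_{0,t}^2}$, which by the first step is $O(\Vert y_1 - y_2\Vert_{\mathcal{L}_{0,t}^2})$, while $\Vert a + b\Vert_{\mathcal{L}_{0,t}^2}$ is bounded on bounded sets of outputs (again via the Gr\"onwall bound $\Vert \widehat{x}_k[y]\Vert_{\mathcal{H}_0^t} \le C(\Vert x_0\Vert + \Vert y\Vert_{\mathcal{L}_{0,t}^2})$). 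This shows local Lipschitz continuity of $y \mapsto r_k[y]$ into $C([0,t];\mathbb{R})$; the evaluation map $y \mapsto r_k[y](s)$ is then continuous as composition with point evaluation, which is bounded on $C([0,t];\mathbb{R})$.

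The main obstacle is the quadratic nonlinearity in $y \mapsto r_k[y]$: the integrand is not globally Lipschitz in $y$, so one only gets continuity (indeed local Lipschitz continuity) rather than a global estimate, and one must be careful to bound the ``sum'' factor $\Vert a+b\Vert_{\mathcal{L}_{0,t}^2}$ on bounded neighborhoods using the a priori bound on $\widehat{x}_k$. Everything else is a routine linear ODE perturbation argument plus the standard Sobolev embedding $H^1(0,t) \hookrightarrow C([0,t])$; I would present the $\widehat{x}_k$ estimate first as a self-contained lemma-style computation and then reuse it verbatim for the residual.
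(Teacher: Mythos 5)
Your proposal is correct and follows essentially the same route as the paper: a Gr\"onwall estimate for the linear ODE satisfied by the difference of filter trajectories gives (Lipschitz) continuity of $y\mapsto\widehat{x}_k[y]$ in $\mathcal{H}_0^t$, the Sobolev embedding $\mathcal{H}_0^t\hookrightarrow C([0,t];\mathbb{R}^n)$ handles point evaluation, and the residual map is treated via the polarization identity and Cauchy--Schwarz with the sum factor bounded by the a priori estimate on $\widehat{x}_k$. The paper's proof is identical in structure, merely phrased for a convergent sequence $(y_j)$ rather than as a local Lipschitz estimate.
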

\begin{proof}
    Let $k$ and $t \in [0,T]$  be fixed and let $(y_j) \subset \mathcal{L}_{0,T}^2$ be a sequence converging to some $\bar{y} $ in $\mathcal{L}_{0,T}^2$. Using a standard Gronwall argument one can show existence of a constant $c >0$ such that
    \begin{equation*}
        \Vert \widehat{x}_k[y_j] - \widehat{x}_k[\bar{y}] \Vert_{\mathcal{H}_0^T}
        \leq 
        c \Vert y_j - \bar{y} \Vert_{\mathcal{L}_{0,t}^2},
    \end{equation*}
    implying continuity of the first mapping. The claim for the second mapping follows from the continuous embedding $ \mathcal{H}_0^t \hookrightarrow C([0,t];\mathbb{R}^n) $ and the continuity of the point evaluation. 

    Now let $y_1,y_2 \in \mathcal{L}_{0,t}^2$. We find 
    \begin{equation*}
    \begin{aligned}
        &\sup_{s \in [0,t]} \vert r_k[y_j](s) - r_k[\bar{y}](s) \vert\\
        &\leq
        \sup_{s \in [0,t]} \int_0^s \vert
        \Vert y_j - C\widehat{x}_k[y_j \Vert_{Q_k^{-1}}^2 
        - \Vert \bar{y} - C\widehat{x}_k[\bar{y}] \Vert_{Q_k^{-1}}^2 
        \vert \, \mathrm{d}\tau\\
        &\leq
        \sup_{s \in [0,t]} \int_0^s
        \vert \langle
        y_j - \bar{y} + C (\widehat{x}_k[y_j] - \widehat{x}_k[\bar{y}]),
        y_j + \bar{y} + C (\widehat{x}_k[y_j] + \widehat{x}_k[\bar{y}])
        \rangle \vert
        \, \mathrm{d}\tau\\
        &\leq
        c
        \left(
        \Vert y_j - \bar{y} \Vert_{\mathcal{L}_{0,t}^2} 
        + \Vert \widehat{x}_k[y_j] - \widehat{x}_k[\bar{y}] \Vert_{\mathcal{L}_{0,t}^2}
        \right)
        \left(
        \Vert y_j + \bar{y} \Vert_{\mathcal{L}_{0,t}^2} 
        + \Vert \widehat{x}_k[y_j] + \widehat{x}_k[\bar{y}] \Vert_{\mathcal{L}_{0,t}^2}
        \right)
        ,
    \end{aligned}
    \end{equation*}
    where the $\tau$ dependence of the integrand is suppressed in the notation and $c$ is an appropriate constant. Due to the convergence of $y_j$ to $\bar{y}$ the right hand side converges to zero ensuring continuity of the third mapping. The assertion for the fourth mapping follows directly. 
\end{proof}
With this result at hand we are able to show that the estimator depends continuously on the output in an $L^2$ sense. 
\begin{proposition}\label[proposition]{prop: LpConv_theta}
    Let $\theta \in (0,\infty)$ and $(y_j) \subset \mathcal{L}_{0,T}^2$ be a sequence converging to some $\bar{y} $ in $\mathcal{L}_{0,T}^2$. Then it holds that
    \begin{equation*}
    \begin{alignedat}{2}
        \forall t \in [0,T]~~~~~
        \Vert \widehat{x}_\theta[y_j](t) - \widehat{x}_\theta[\bar{y}](t) \Vert
        &\to 0
        ~&&\text{as}~ j \to \infty,\\
        \Vert \widehat{x}_\theta[y_j] - \widehat{x}_\theta[\bar{y}] \Vert_{\mathcal{L}_{0,T}^1}
        &\to 0
        ~&&\text{as}~ j \to \infty,
    \end{alignedat}
    \end{equation*}
\end{proposition}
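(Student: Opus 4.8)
The plan is to build on the implicit characterization $0 = G[\theta,y](t,x)$ from \Cref{lem: charEntrMin}, the uniform coercivity of $D_x G[\theta,y](t,\cdot)$ established in \Cref{lem: regImpl}(ii), the $y$-uniform bound of \Cref{lem: UnifBoundTheta}, and the continuous dependence of $\widehat{x}_k[y](s)$ and $r_k[y](s)$ on $y$ provided by \Cref{lem: ContInYEnergy}.

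For the pointwise convergence, fix $t \in [0,T]$ and abbreviate $x_j = \widehat{x}_\theta[y_j](t)$ and $\bar x = \widehat{x}_\theta[\bar y](t)$. Both are zeros of $G[\theta,\cdot](t,\cdot)$, so inserting the intermediate term $G[\theta,y_j](t,\bar x)$ and applying the fundamental theorem of calculus to the $C^1$ map $G[\theta,y_j](t,\cdot)$ yields
\begin{equation*}
    0 = A_j\,(x_j - \bar x) + \big( G[\theta,y_j](t,\bar x) - G[\theta,\bar y](t,\bar x) \big),
    \qquad
    A_j = \int_0^1 D_x G[\theta,y_j]\big(t,\,\tau x_j + (1-\tau)\bar x\big)\, \mathrm{d}\tau .
\end{equation*}
The pointwise quadratic-form estimate from the proof of \Cref{lem: regImpl}(ii), combined with \Cref{lem: estWeightNorm}, gives $z^\top D_x G[\theta,y](t,x)\,z \geq \lambda_\mathrm{min}\Vert z\Vert^2$, hence the same lower bound holds for the average $A_j$; thus $A_j$ is invertible with $\Vert A_j^{-1}\Vert_2 \leq \lambda_\mathrm{min}^{-1}$ uniformly in $j$. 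Consequently $\Vert x_j - \bar x\Vert \leq \lambda_\mathrm{min}^{-1}\,\Vert G[\theta,y_j](t,\bar x) - G[\theta,\bar y](t,\bar x)\Vert$, and the right-hand side tends to zero since, by the definition \eqref{eq: implRiskMin}, the map $y \mapsto G[\theta,y](t,\bar x)$ is continuous at $\bar y$, being a composition of $y\mapsto\widehat{x}_k[y](t)$ and $y\mapsto r_k[y](t)$ (continuous by \Cref{lem: ContInYEnergy}) with continuous elementary operations. (Alternatively: the sequence $(x_j)$ is bounded by \Cref{lem: UnifBoundTheta}, so any subsequence admits a convergent sub-subsequence, and passing to the limit in $G[\theta,y_j](t,x_j)=0$ via joint continuity of $G$ in $(y,x)$ together with uniqueness from \Cref{lem: charEntrMin} identifies every accumulation point as $\bar x$; the subsequence principle then gives $x_j\to\bar x$.)

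For convergence in $\mathcal{L}_{0,T}^1$, note that $y_j\to\bar y$ in $\mathcal{L}_{0,T}^2$ forces $\Vert y_j\Vert_{\mathcal{L}_{0,T}^2}\leq R$ for some finite $R$, so \Cref{lem: UnifBoundTheta} gives $\Vert\widehat{x}_\theta[y_j](t)\Vert\leq \sqrt{\widehat{C}(\Vert x_0\Vert^2 + R^2)}$ for all $t\in[0,T]$ and all $j$. The constant $2\sqrt{\widehat{C}(\Vert x_0\Vert^2 + R^2)}$ is therefore an integrable majorant of $\Vert\widehat{x}_\theta[y_j]-\widehat{x}_\theta[\bar y]\Vert$ on $[0,T]$; each such integrand is measurable because $\widehat{x}_\theta[y_j]\in W^{1,1}(0,T;\mathbb{R}^n)$ by \Cref{thm: regularity_x_theta}(i) with $p=1$; and the first part supplies pointwise convergence to zero. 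The dominated convergence theorem then delivers $\Vert\widehat{x}_\theta[y_j]-\widehat{x}_\theta[\bar y]\Vert_{\mathcal{L}_{0,T}^1}\to 0$.

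I do not anticipate a real obstacle: the whole argument is bookkeeping on top of the auxiliary lemmas. The only two points warranting a moment's care are that the averaged Jacobian $A_j$ inherits the uniform ellipticity lower bound (immediate, by integrating the pointwise inequality in $\tau$) and that the $\mathcal{L}^1$ majorant can be chosen independently of $j$, which is precisely the $y$-uniformity built into \Cref{lem: UnifBoundTheta}.
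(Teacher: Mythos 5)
Your argument is correct, and for the pointwise convergence you take a genuinely different route from the paper. The paper argues by compactness: the sequence $\widehat{x}_\theta[y_j](t)$ is bounded by \Cref{lem: UnifBoundTheta}, every subsequence has a convergent sub-subsequence, and passing to the limit in the fixed-point identity \eqref{eq: charEntrMin} (using \Cref{lem: ContInYEnergy}) identifies the limit as $\widehat{x}_\theta[\bar y](t)$ by uniqueness; the subsequence principle finishes. That is exactly the alternative you sketch in parentheses. Your primary route instead exploits the implicit equation $G[\theta,y](t,x)=0$ quantitatively: writing the difference of the two zeros through the averaged Jacobian $A_j$ and using that the uniform ellipticity bound $z^\top D_xG\,z\geq\lambda_{\mathrm{min}}\Vert z\Vert^2$ survives averaging in $\tau$, you obtain the explicit stability estimate $\Vert x_j-\bar x\Vert\leq\lambda_{\mathrm{min}}^{-1}\Vert G[\theta,y_j](t,\bar x)-G[\theta,\bar y](t,\bar x)\Vert$. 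This is stronger than what the paper proves: it gives a modulus of continuity of $\widehat{x}_\theta[\cdot](t)$ in terms of the data perturbation (and could be made fully quantitative in $\Vert y_j-\bar y\Vert_{\mathcal{L}_{0,T}^2}$ by tracking the Lipschitz constants in \Cref{lem: ContInYEnergy}), whereas the compactness argument yields only qualitative convergence. The $\mathcal{L}^1$ part of your proof (constant majorant from the $y$-uniform bound plus dominated convergence) coincides with the paper's.

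One citation you should repair: to justify measurability of $t\mapsto\widehat{x}_\theta[y_j](t)$ you invoke \Cref{thm: regularity_x_theta}(i), but the paper's proof of that statement relies on \Cref{prop: LpConv_theta}, so this is circular. The fix is immediate: measurability (indeed essential boundedness) is already asserted in \Cref{lem: UnifBoundTheta}, which is proved independently via the arguments of \Cref{cor: min_inf_essBd}, so cite that instead.
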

\begin{proof}
    Assume for now that $\widehat{x}_\theta[y_j]$ converges to $\widehat{x}_\theta[\bar{y}]$  pointwise  for all $t \in [0,T]$. 
    We establish an integrable upper bound for $\Vert \widehat{x}_\theta[y_j](t) \Vert$ to apply the dominated convergence. The convergence of the sequence $(y_j)$ implies the existence of  a constant $C$ such that   $\Vert y_j \Vert_{\mathcal{L}_{0,T}^2} \leq C$ for all $j\in \mathbb{N}$. Utilizing \Cref{lem: UnifBoundTheta} we obtain that for all $t$ it holds
    \begin{equation}\label{eq:aux2}
        \Vert \widehat{x}_\theta[y_j](t) \Vert
        \leq 
        \sqrt{
        \widehat{C} \left( \Vert x_0 \Vert^2 + \Vert y_j \Vert_{\mathcal{L}_{0,T}^2}^2 \right)}
        \leq 
        \sqrt{
        \widehat{C} \left( \Vert x_0 \Vert^2 + C^2 \right)}.
    \end{equation}
     Since the right hand side is constant, the integrable upper bound is found and the dominated convergence theorem  yields the desired convergence in $\mathcal{L}_{0,T}^p$.

    It remains to show the pointwise convergence. Let $t \in [0,T]$ be fixed. 
    By \eqref{eq:aux2} the family $\{ \widehat{x}_\theta[y_j](t)\}_{j \in \mathbb{N}}$ is bounded in $\mathbb{R}^n$. Therefore each arbitrary subsequence admits another subsequence (also denoted by $\widehat{x}_\theta[y_j](t)$) that converges to some $\bar{x} \in \mathbb{R}^n$.  Next we recall from \Cref{lem: charEntrMin} that for all $j$
    \begin{equation*}
    \begin{aligned}
        \widehat{x}_\theta[y_j] (t)
        &=
        \left( \sum_{k=1}^N e^{\theta  \Vert \widehat{x}_\theta[y_j] (t) - \widehat{x}_k[y_j](t) \Vert_{P_k(t)}^2 + \theta r_k[y_j](t) } P_k(t) \right)^{-1}\\
        &\sum_{k=1}^N e^{\theta  \Vert \widehat{x}_\theta[y_j] (t) - \widehat{x}_k[y_j](t) \Vert_{P_k(t)}^2 + \theta r_k[y_j](t) } P_k(t) \widehat{x}_k[y_j](t)
    \end{aligned}
    \end{equation*}
   holds. Considering the convergent   subsequence and utilizing the regularity results established in \Cref{lem: ContInYEnergy} and the continuity of the squared norm, the exponential, and the matrix inverse we find that the right hand side converges and we obtain
    \begin{equation*}
        \bar{x}
        =
        \left( \sum_{k=1}^N e^{\theta  \Vert \bar{x} - \widehat{x}_k[\bar{y}](t) \Vert_{P_k(t)}^2 + \theta r_k[\bar{y}](t) } P_k(t) \right)^{-1}
        \sum_{k=1}^N e^{\theta  \Vert \bar{x} - \widehat{x}_k[\bar{y}](t) \Vert_{P_k(t)}^2 + \theta r_k[\bar{y}](t) } P_k(t) \widehat{x}_k[\bar{y}](t).
    \end{equation*}    
    By \Cref{lem: charEntrMin} it follows that $\bar{x} = \widehat{x}_\theta[\bar{y}](t)$ and we have shown that any subsequence of $\widehat{x}_\theta[y_j](t)$ admits a subsequence converging to $\widehat{x}_\theta[\bar{y}](t)$. By the subsequence principle the pointwise convergence follows and the proof is concluded.
\end{proof}
Next we show convergence of the time derivatives of an approximating sequence.
\begin{lemma}\label[lemma]{lem: conv_derivative}
    Let $\theta \in (0,\infty)$ and $(y_j) \subset C([0,T];\mathbb{R}^r)$ be a sequence converging to some $\bar{y} $ in $\mathcal{L}_{0,T}^2$.
    Define $\bar{x}$ as 
    \begin{equation}\label{eq: proof_x_bar}
        \bar{x}(t) = D_x G[\theta,\bar{y}](t,\widehat{x}_\theta[\bar{y}](t))^{-1}
        \partial_t G[\theta,\bar{y}] (t,\widehat{x}_\theta[\bar{y}](t)).
    \end{equation}
    Then $\bar{x} \in \mathcal{L}_{0,T}^1 $ and $(y_j)$ admits a subsequence (denoted by $(y_j)$) such that
    \begin{equation}\label{eq: L1_conv}
        \int_0^T \Vert
        \dot{\widehat{x}}_\theta[y_j](t) 
        - \bar{x}(t)
        \Vert \, \mathrm{d}t
        \to
        0
        ~~~~~
        \text{for}~
        j \to \infty.
    \end{equation}
\end{lemma}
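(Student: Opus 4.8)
\section*{Proof plan for \Cref{lem: conv_derivative}}

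The plan is to apply the dominated convergence theorem to the sequence of classical derivatives $\dot{\widehat{x}}_\theta[y_j]$, which exist because each $y_j$ is continuous. First I would invoke \Cref{thm: regularity_x_theta}(ii), already established for continuous data: since $y_j \in C([0,T];\mathbb{R}^r)$ we have $\widehat{x}_\theta[y_j] \in C^1([0,T];\mathbb{R}^n)$ with $\dot{\widehat{x}}_\theta[y_j](t)$ given by the right-hand side of \eqref{eq: Dt_x_theta} with $y$ replaced by $y_j$, using the explicit expressions \eqref{eq: F_x_derivative} and \eqref{eq: F_t_derivative}. The task is then to pass to the limit $j\to\infty$ in this formula, pointwise almost everywhere and under a fixed integrable majorant, so that the $\mathcal{L}_{0,T}^1$ convergence \eqref{eq: L1_conv} and the integrability $\bar x \in \mathcal{L}_{0,T}^1$ both follow at once.

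Since $y_j \to \bar{y}$ only in $\mathcal{L}_{0,T}^2$, I would first pass to a subsequence (not relabelled) along which $y_j(t)\to\bar{y}(t)$ for a.e.\ $t$ and, simultaneously, $\Vert y_j(t)\Vert \leq h(t)$ for a.e.\ $t$ and all $j$, with a fixed $h \in \mathcal{L}_{0,T}^2$; such a subsequence exists by the standard argument underlying the Riesz--Fischer theorem. Fix such a $t$. By \Cref{prop: LpConv_theta} we have $\widehat{x}_\theta[y_j](t)\to\widehat{x}_\theta[\bar{y}](t)$; by \Cref{lem: ContInYEnergy} (together with the embedding $\mathcal{H}_0^t\hookrightarrow C([0,t];\mathbb{R}^n)$) we have $\widehat{x}_k[y_j](t)\to\widehat{x}_k[\bar{y}](t)$ and $r_k[y_j](t)\to r_k[\bar{y}](t)$ for every $k$; and from \eqref{eq: KF_k} the identity $\dot{\widehat{x}}_k[y_j](t) = A_k\widehat{x}_k[y_j](t) + \Pi_k(t)C^\top Q_k^{-1}\bigl(y_j(t)-C\widehat{x}_k[y_j](t)\bigr)$ gives $\dot{\widehat{x}}_k[y_j](t)\to\dot{\widehat{x}}_k[\bar{y}](t)$ because $y_j(t)\to\bar{y}(t)$. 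Inserting these convergences into \eqref{eq: F_x_derivative} and \eqref{eq: F_t_derivative} and using continuity of the exponential yields $D_x G[\theta,y_j](t,\widehat{x}_\theta[y_j](t)) \to D_x G[\theta,\bar{y}](t,\widehat{x}_\theta[\bar{y}](t))$ and likewise for $\partial_t G$; since by \Cref{lem: regImpl}(ii) all these matrices are positive definite with $\Vert\cdot^{-1}\Vert_2\leq c$, continuity of matrix inversion on that set gives convergence of the inverses. Hence $\dot{\widehat{x}}_\theta[y_j](t)\to\bar{x}(t)$ for a.e.\ $t$.

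For the majorant I would estimate $\Vert\dot{\widehat{x}}_\theta[y_j](t)\Vert \leq \Vert D_x G[\theta,y_j](t,\widehat{x}_\theta[y_j](t))^{-1}\Vert_2\,\Vert \partial_t G[\theta,y_j](t,\widehat{x}_\theta[y_j](t))\Vert \leq c\,\Vert \partial_t G[\theta,y_j](t,\widehat{x}_\theta[y_j](t))\Vert$ via \Cref{lem: regImpl}(ii). In \eqref{eq: F_t_derivative} the exponential weights $e^{\theta\Vert \widehat{x}_\theta[y_j](t)-\widehat{x}_k[y_j](t)\Vert_{P_k(t)}^2 + \theta r_k[y_j](t)}$ are bounded uniformly in $j$ and $t$: by \Cref{lem: UnifBoundTheta} and a Gronwall bound for $\widehat{x}_k[y_j]$, the quantities $\Vert \widehat{x}_\theta[y_j](t)\Vert$ and $\Vert\widehat{x}_k[y_j](t)\Vert$ are controlled by $\Vert x_0\Vert$ and $\Vert y_j\Vert_{\mathcal{L}_{0,T}^2}\leq C$, and $r_k[y_j](t)\leq\int_0^T\Vert y_j(s)-C\widehat{x}_k[y_j](s)\Vert_{Q_k^{-1}}^2\,\mathrm{d}s$ is bounded the same way (then apply \Cref{lem: estWeightNorm}). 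The remaining factors in \eqref{eq: F_t_derivative} are either continuous in $t$ ($P_k$, $\dot{P}_k$) or grow at most linearly or quadratically in $\Vert y_j(t)\Vert$ through $\dot{\widehat{x}}_k[y_j](t)$ and $\Vert y_j(t)-C\widehat{x}_k[y_j](t)\Vert_{Q_k^{-1}}^2$; using $\Vert y_j(t)\Vert\leq h(t)$ this produces $\Vert\dot{\widehat{x}}_\theta[y_j](t)\Vert \leq H(t)$ a.e.\ with a fixed $H=\mathrm{const}\,(1+h+h^2)\in\mathcal{L}_{0,T}^1$. The dominated convergence theorem then gives \eqref{eq: L1_conv}, and $\bar{x}\in\mathcal{L}_{0,T}^1$ since $\bar x$ is an a.e.\ limit of measurable functions with $\Vert\bar{x}\Vert\leq H$ a.e.

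The main obstacle is producing a majorant that is uniform in $j$: $\mathcal{L}_{0,T}^2$-convergence of $(y_j)$ gives neither a pointwise bound on $\Vert y_j(t)\Vert$ nor pointwise convergence, so one must first extract the subsequence dominated by a fixed $h\in\mathcal{L}_{0,T}^2$ and then verify carefully that every $j$-dependent term in \eqref{eq: F_t_derivative} — especially the exponential weights and $\dot{\widehat{x}}_k[y_j]$ — is controlled solely through $h$, $\Vert x_0\Vert$, and the $j$-uniform bounds of \Cref{lem: UnifBoundTheta} and the Gronwall estimate; once this bookkeeping is done, both the pointwise a.e.\ convergence and the domination fall into place and the dominated convergence theorem closes the argument.
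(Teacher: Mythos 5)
Your proposal is correct and follows essentially the same route as the paper: extract a subsequence with pointwise a.e.\ convergence and an $\mathcal{L}^2$ majorant for $(y_j)$, pass to the limit term by term in the implicit-function-theorem formula $\dot{\widehat{x}}_\theta[y_j] = -D_xG^{-1}\partial_t G$ using \Cref{prop: LpConv_theta}, \Cref{lem: ContInYEnergy} and the uniform bound on $\Vert D_xG^{-1}\Vert_2$ from \Cref{lem: regImpl}(ii), and close with dominated convergence via a majorant of the form $\mathrm{const}\,(1+h+h^2)\in\mathcal{L}_{0,T}^1$. The only cosmetic difference is that you obtain pointwise convergence of $\dot{\widehat{x}}_k[y_j](t)$ directly from the filter ODE, whereas the paper extracts it from the $\mathcal{H}_0^T$-convergence in \Cref{lem: ContInYEnergy}; both are fine.
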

\begin{proof}
    The assertion that $\bar{x} \in \mathcal{L}_{0,T}^1$ follows from \Cref{lem: regImpl} (ii) and (iii).
    Turning to \eqref{eq: L1_conv} we first note that
    by assumption we have convergence of $y_j$ in $\mathcal{L}_{0,T}^2$ and by \Cref{lem: ContInYEnergy} we have convergence of $\dot{\widehat{x}}_k[y_j]$ in $\mathcal{L}_{0,T}^2$. Hence \cite[Cor.~2.32]{Fol99} allows the extraction of a subsequence such that $y_j$ and $\dot{\widehat{x}}_k[y_j]$ converge pointwise almost everywhere. Further by \Cref{lem: ContInYEnergy} and \Cref{lem: conv_derivative} we have pointwise convergence of $\widehat{x}_k[y_j]$, $r_k[y_j]$, and $\widehat{x}_\theta[y_j]$. With \eqref{eq: F_x_derivative}, \eqref{eq: F_t_derivative}, and \eqref{eq: Dt_x_theta} it follows that for almost all $t \in [0,T]$ it holds
    \begin{equation*}
        \dot{\widehat{x}}_\theta[y_j](t)
        =
        - D_x G[\theta,y_j](t,\widehat{x}_\theta[y_j](t))^{-1}
        \partial_t G[\theta,y_j](t,\widehat{x}_\theta[y_j](t))
        \to \bar{x}(t)
        ~~~~~
        \text{for}~j \to \infty.
    \end{equation*}
    An integrable function dominating $\dot{\widehat{x}}_\theta[y_j]$ can be derived using the boundedness of $\Vert y_j \Vert_{\mathcal{L}_{0,T}^2}$, \Cref{lem: UnifBoundTheta}, \Cref{lem: ContInYEnergy}, and the continuity of $P_k$ and $\dot{P}_k$. The assertion follows with dominated convergence.
\end{proof}
We now identify the limit $\bar{x}$ as the weak derivative of $\widehat{x}_\theta[\bar{y}]$ to show the asserted regularity of the estimator.
\vspace{1mm}\\
\textbf{Proof of \Cref{thm: regularity_x_theta}(i)}
\begin{proof}
    We begin by showing the result for $p=1$. Hence let $\bar{y} \in \mathcal{L}_{0,T}^2$. From \Cref{lem: UnifBoundTheta} we know that $\widehat{x}_\theta[\bar{y}] \in \mathcal{L}_{0,T}^\infty$. To obtain the assertion it remains to show that its weak derivative exists and is integrable. 
    We denote $\bar{x} \in \mathcal{L}_{0,T}^1$ as in \eqref{eq: proof_x_bar}.
    Then the density of $C([0,T];\mathbb{R}^r)$ in $\mathcal{L}_{0,T}^2$ together with \Cref{prop: LpConv_theta} and \Cref{lem: conv_derivative} ensures existence of a sequence $(y_j) \subset C([0,T];\mathbb{R}^r)$ such that
    \begin{alignat}{3}
        \widehat{x}_\theta[y_j] 
        &\to 
        \widehat{x}_\theta[\bar{y}]
        ~~~~~&&\text{for}~ j \to \infty
        ~~~~~~~~~~
        &\text{in}~
        \mathcal{L}_{0,T}^1, \label{eq: proofConv1}\\
        \dot{\widehat{x}}_\theta[y_j] 
        &\to 
        \bar{x}
        ~&&\text{for}~ j \to \infty
        ~~~~~~~~~~
        &\text{in}~
        \mathcal{L}_{0,T}^1. \label{eq: proofConv2}
    \end{alignat}
    Now for any test function $\varphi \in C_0^\infty([0,T];\mathbb{R}^n)$ we obtain
    \begin{equation*}
    \begin{aligned}
        \int_0^T \langle \widehat{x}_\theta [\bar{y}] (t), \dot{\varphi}(t) \rangle \, \mathrm{d}t 
        &=
        \lim_{j \to \infty} 
        \int_0^T \langle \widehat{x}_\theta [y_j] (t), \dot{\varphi}(t) \rangle \, \mathrm{d}t \\
        &=
        - \lim_{j \to \infty} 
        \int_0^T \langle \dot{\widehat{x}}_\theta [y_j] (t), \varphi(t) \rangle \, \mathrm{d}t
        =
        - \int_0^T \langle \bar{x} (t), \varphi(t) \rangle \, \mathrm{d}t,
    \end{aligned}
    \end{equation*}
    where the first equality is justified by \eqref{eq: proofConv1}, the second equality follows from \Cref{thm: regularity_x_theta}(iii) and partial integration, and the third equality holds due to \eqref{eq: proofConv2}. 
    By definition of the weak derivative we have shown that $\bar{x} \in \mathcal{L}_{0,T}^1$ is the weak derivative of $\widehat{x}_\theta[\bar{y}]$, in short $\dot{\widehat{x}}_\theta [\bar{y}] = \bar{x}$. It follows that $\widehat{x}_\theta [\bar{y}] \in W^{1,1}(0,T;\mathbb{R}^n)$. Further, by the definition of $\bar{x}$ the formula for the weak derivative given in \eqref{eq: min_entrR_TimeDer} is proven to hold for square integrable outputs.
    
    It remains to show the result for $2 \leq p < \infty$. For $y \in \mathcal{L}_{0,T}^{2p} \hookrightarrow \mathcal{L}_{0,T}^{2}$ the first part of this proof ensures weak differentiability in $\mathcal{L}_{0,T}^1$, and \eqref{eq: min_entrR_TimeDer} for its derivative. From this formula it can subsequently be confirmed that $\dot{\widehat{x}}_\theta[y] \in \mathcal{L}_{0,T}^p$ and the assertion is shown. 
\end{proof}

\end{document}